\newtheorem{setup}{Setup}
\newtheorem{definition}{Definition}
\newcommand{\e}{{\epsilon}}
\newcommand{\DD}{{\mathbb{D}}}
\newcommand{\nin}{{\noindent}}
\newcommand{\dd}{{\delta}}
\newcommand\old[1]{}
\title{Convergence of discrete Green functions with Neumann boundary conditions}
\author{Shirshendu Ganguly and Yuval Peres}
\address{Department of Mathematics\\ University of Washington\\ WA, USA.}
\email{sganguly@math.washington.edu}
\address{ Microsoft Research\\ WA, USA.}
\email{peres@microsoft.com}
\def\Z{\mathbb{Z}}
\begin{document}
\maketitle

\begin{abstract}
In this note we prove convergence of Green functions with Neumann boundary conditions for the random walk  to their continuous counterparts. Also a few  Beurling type hitting estimates are obtained for the random walk on discretizations of smooth domains. These have been used recently in the study of a two dimensional competing aggregation system known as \emph{Competitive Erosion}. Some of the statements appearing in this note are classical for $\mathbb{Z}^2$. However additional arguments are needed for the proofs in the bounded geometry setting. 
\end{abstract}
\section{Introduction}\label{intro}
\nin
The article considers simple random walk on discretizations of smooth planar simply connected domains.  It has two parts. In the first part we show convergence of a class of Green functions with Neumann boundary conditions for the random walk to the corresponding one for Reflected Brownian motion on the same domain.  Such  convergence results are typically technically challenging and are often extremely useful while understanding scaling limits of statistical physics models. For e.g. results in \cite{hk} were used in understanding  a modified version of Diffusion Limited Aggregation (DLA). More recently convergence of ``discrete analytic functions" to their continuous counterparts were used to show convergence and conformal invariance of well known critical two dimensional statistical physics models. See \cite{smir} and the references therein for a detailed account of such results.\\
The  results appearing in this article have been used recently in \cite{ero} to establish \emph{conformal invariance} of a competing aggregation system known as {\emph{Competitive Erosion}} on smooth domains.\\
\nin
Most of the proofs  rely on \cite{BC}, \cite{wtf} and \cite{thes} where random walk on  discretizations of regular domains was shown to converge to Reflected Brownian motion under suitable time change and subsequent local CLT estimates were obtained. \\
We remark that similar results appear in \cite{hk}, in the setting of the Dirichlet problem.\\
\nin
In the second part we prove some Beurling type and other hitting estimates for random walk in this setting.
Most of the results in this part are classical when the underlying lattice is the whole of $\mathbb{Z}^2.$ However for  bounded geometry, additional arguments involving heat kernel estimates for the random walk are required. 
\subsection{Informal set up}
Given any  simply connected domain $\U \subset \C$ with certain regularity properties, consider its discretization $\U_n:=\U \cap \frac{1}{n}\mathbb{Z}^2$. Abusing notation a little we denote by $\U_n$ the graph where the edges are induced by the nearest neighbor edges on $\frac{1}{n}\mathbb{Z}^2$ (formal definition appears later).  Fix two points $x_1,x_2$ on $\U_n$ near the boundary of $\U$ and consider the  function on $\U_n$ which is harmonic on all points except $x_1,$ and $x_2$ where the discrete laplacian is $1$ and $-1$ respectively. (For the formal definition of laplacian see \eqref{lapnot1}). The goal of this note is to understand convergence of these functions as the mesh size goes to $0.$ However for technical purposes we study `smoothed' versions of such functions whose laplacian vanishes except on small open sets near the points $x_1$ and $x_2$. More precisely, fix a small $\dd>0$ and choose the open sets to be  balls of radius roughly $\dd$ and also at distance $\dd$ from $x_1$ and $x_2$, (see Fig \ref{blobdef234}). We consider the function on $\U_n$ which is harmonic at every lattice point outside the balls and on the balls the laplacian is roughly the inverse of the the number of lattice points inside the ball. Thus the singularities now are uniformly distributed over open sets in the interior instead of being at points. Formal definition appears in the next section.  
Such functions were useful in the recent study a competing aggregation system known as \emph{Competitive Erosion}. For more details see \cite{ero}.
We remark that the proof techniques in this article are general and should work for a much more general class of laplacian conditions.

\section{Formal definitions and setup}\label{precise}
\nin
$\C$ will denote the complex plane. For any two points $x,y \in \C,$ $d(x,y)$ will denote the euclidean distance between them. Also for any set $A\subset \C$ and any $x \in \C$ denote by $d(x,A)$, the distance between the point and the set. $\DD$ will be used to denote the unit disc centered at the origin in the complex plane. \\
\noindent
\begin{definition}\label{bnddef1}
For any domain $B\subset \C$ denote by $\partial B$ the boundary of $B.$ Also for any graph $G=(V,E)$ with vertices $V$ and edges $E,$ for any $A\subset V$ let 
\begin{equation}\label{bdry1}
\partial_{out}A :=\{y \in A^c:\,\,\exists\,\, x\in A\,\,\text{such that }  x\sim y\}.
\end{equation}
\end{definition}
\nin
Let  a bounded simply connected domain $\U \subset \C$ is ``smooth" mean that  the boundary of $\U$ is an analytic curve (equivalently the conformal map from $\U$ to $\DD$ has a conformal extension across the boundary, see \cite[Prop 3.1]{pom}). From now on all our domains will be bounded, simply connected and smooth. Hence we will drop the adjectives for brevity. 

\noindent
\begin{setup}\label{para} 
Given $\U$ we take $\U_n = \U \cap (\frac1n \Z^2),$ as our vertex set. As the edges of our graph we take the usual nearest-neighbor edges of $\U_n$ though of as a subset of $\frac1n \Z^2$. However we delete every such edge which intersects $\U^c$. By the smoothness assumption on $\U$, $\U_n$ will be connected for large enough $n$. See Remark \ref{graphcon123} below.\\

\nin
Fix $x_1,x_2 \in \partial \U$. For small enough $\dd>0$ let $y_1, y_2 \in \U$ be such that,
 \begin{eqnarray*}
 d(x_{i},y_i)&=& \dd \\
 d(y_{i},\partial \U) &> & \dd/2.
  \end{eqnarray*}   
For $i=1,2,$ let $\U_i=B(y_i,\frac{\dd}{4})$ (we will call them 'blobs').   
As discrete approximations of $\U_i$  we take $$\U_{i,n}=B(z_{i,n}, \frac{\dd}{4})\cap \U_{n},$$ where $z_{i,n}\in \frac{1}{n}\Z^2$ is the closest lattice point to $y_i$.  
\end{setup}
\nin
 Note that in the above, $y_i$'s were just required to satisfy certain  properties and other than that were completely arbitrary. Also we abuse notation a little in the definition of the blobs: $n$ should be thought of as large and hence $\U_n$ (the underlying graph) should not be confused with the blobs $\U_1$ and $\U_2$. 
\begin{remark}\label{graphcon123}
The smoothness assumption on $\U$ allows us to choose the $y_i$'s.  This is formally proved in Corollary \ref{ias1}. See Fig. \ref{blobdef234}.
The connectedness of $\U_n$ for large enough $n$ follows due to the locally half plane like behavior, see \eqref{localhalf}.  
\end{remark}

\begin{figure}
\centering
\includegraphics[scale=.8]{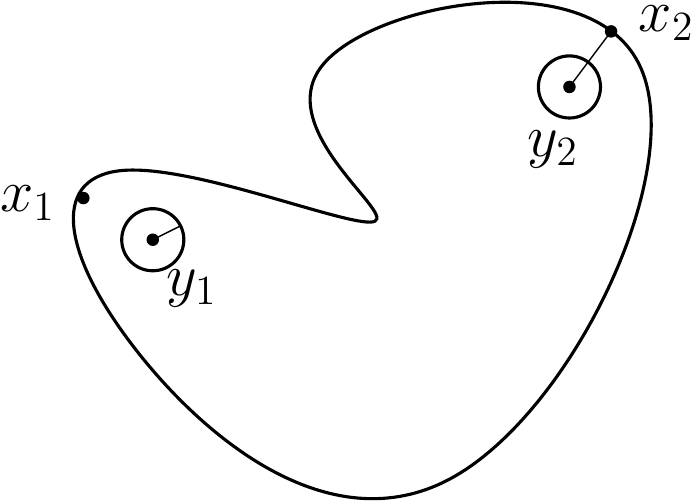}
\caption{$y_i$'s are points at distance $\dd$ from $x_i$'s. They are at distance at least $\frac{\dd}{2}$ from $\partial \U$. The blobs are discs of radius $\frac{\dd}{4}$ centered at the $y_i$'s.}
\label{blobdef234}
\end{figure}

\begin{remark}\label{bili1}
\noindent
For the domain $\U$ and points $x_1,x_2 \in \partial \U$ let  
\begin{eqnarray}
\label{confmap1}
\phi:\DD \rightarrow \U\\
\nonumber
\psi: \U  \rightarrow \DD
\end{eqnarray}
be conformal maps such that $\phi \circ \psi,$ $\psi \circ\phi$ are the identity maps on the respective domains and
$\psi(x_1) = -i,\, \psi(x_2) =  i.$ 
The existence of such maps is guaranteed by the Riemann Mapping Theorem. See for eg: \cite[Chapter 6]{ahl1}. In fact there exists a family of such pairs since a conformal map between domains has three degrees of freedom and here we have fixed the value at only two points.  However we choose a particular pair $(\phi,\psi)$ assumed to be fixed throughout the rest of the article.
Since $\U$ is smooth, using Schwarz reflection  $\phi$ and hence $\psi$ can be extended conformally  across the boundary onto some neighborhoods of $\overline \DD$ and $\overline \U.$ In particular this implies that the derivatives $|\phi'|$ and $|\psi'|$ are bounded away from $0$ and $\infty$ on $\overline \DD$ and $\overline \U$  respectively. See \cite[Prop 3.1]{pom}. This bi-Lipschitz nature of the maps will be used in several distortion estimates throughout the rest of the article.
Also note,  by construction $ |\U_{1,n}|=|\U_{2,n}|.$  This will be technically convenient.
\end{remark}

\subsection{Assumptions, notations and conventions}\label{techass} We summarize some of the notations already used and introduce some new notations and conventions to be used in the sequel.

\noindent
Through out the article, by random walk on $\U_n$, we will mean the 
continuous time random  walk  with  $\exp(2n^2)$  waiting times (mean $\frac{1}{2n^2}$) unless specifically mentioned otherwise.
This is done to ensure that the random walk density converges to that of Reflected Brownian motion. A fact which would be used heavily.\\
\nin
We will denote the complex plane by $\C.$
For any two points $x,y \in \C,$  $d(x,y)$ will be used  to denote the euclidean distance between them. Also for any set $A\subset \C$ and any $x \in \C$ denote by $d(x,A)$, the distance between the point and the set.  
$B(x,\e )$ denotes the open euclidean ball of radius  $\e$ with center $x$.
For any process, and a subset $A$ of the corresponding state space,
$\tau(A)$ will denote the hitting time of that set (we drop the dependence on the process in the notation since it will be clear from context). Also $\mathbf{1}(\cdot)$ will be used to denote the indicator function.\\
\noindent
To avoid cumbersome notation, we will often use the same letter (generally $C$, $D$, $c$ or $d$)  for a constant whose value may change from line to line. $O(\cdot),\Omega(\cdot),\Theta(\cdot)$ are used to denote their usual meaning.

\section{PART I: Green Function: definitions and results}\label{TP}

\subsection{Discrete Green Function}
Recall that we consider continuous time random walk on $\U_n$  with exponential waiting times with mean $\frac{1}{2n^2}$ (Section \ref{techass}). Call it $X(t).$ For $x,y \in \U_n$  let
\begin{equation}\label{rwhk}
\mathbb{P}_x(X(t)=y)
\end{equation}
denote the chance that the random walk on $\U_n$ starting from $x$ is at $y$ at time $t.$ For notational simplicity we suppress the $n$ dependence in $\P$ since the graph will be clear from context.
Similarly for any set $A\subset \U_n$, let $\mathbb{P}_x(X(t)\in A)$ denote the chance that the random walk is in $A$ at time $t$ starting from $x$.
\begin{definition}(Green function) Define the function $G_n$ on $\U_n$ :
 for any $x\in \U_n,$
 \begin{equation}\label{dgf1}
 G_n(x):=\frac{2n^2}{|\U_{1,n}|}\int_{0}^{\infty}[\mathbb{P}_x(X(t)\in \U_{1,n})-\mathbb{P}_x(X(t)\in \U_{2,n})]dt-c.
 \end{equation}
where $c=c(\dd)$ is some constant explicitly mentioned in \eqref{specificconst}. 
\end{definition}
\nin
The dependence of $G_n$ on $\dd$ (through $\U_{1,n},\U_{2,n}$) is suppressed in the notation. 
The centering constant $c$ is not important. The only purpose of the centering is to ensure that for small $\dd$ if $n$ is very large then the Green function upto a universal multiplicative constant (domain independent) approaches the function $\log\left|\frac{\psi(x)-i}{\psi(x)+i}\right|, $ (where $\psi$ was defined in \eqref{confmap1}). The convergence results are stated in Section \ref{mr}.\\

\nin
Observe that the Green function upto translation is the difference in the amount of time random walk spends in $\U_{1,n}$ and in $\U_{2,n}$ respectively. It is shown later that the ``discrete laplacian" (see \eqref{lapnot1}) of $G_n$ is 
$$\frac{1}{| \U_{1,n}|} (\mathbf{1}( \U_{1,n})-\mathbf{1}( \U_{2,n})).$$ Thus as $\dd$ goes to $0$, the functions $G_{n}$ can be thought of as 'smoothly' approximating the function on $\U_n$ which is discrete harmonic on $\U_n$ and has laplacian $1$ and $-1$ at $x_1$ and $x_2$ respectively. As discussed in the beginning, the purpose of this article is to prove convergence of such `smoothed' Green functions.

\nin
The fact that the integral in the expression for $G_n$ is absolutely integrable follows from the following lemma.
\begin{lem}\label{welldefined} Given $\U$ as in Setup \ref{para} there exists a constant $D=D(\U)$ and a time $T=T(\U)$ such that, for all large enough $n$,
$$\sup_{x\in \U_n}|\mathbb{P}_x(X(t)\in \U_{1,n})-\mathbb{P}_x(X(t)\in \U_{2,n})|\le 2e^{-Dt}$$ 
for all $t\ge T$.
\end{lem}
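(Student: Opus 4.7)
The plan is to combine an $L^2$ spectral-gap estimate for the continuous-time walk $X(t)$ with a short-time on-diagonal heat-kernel bound, and use the latter to promote exponential $L^2$ decay into a pointwise decay. Let $\pi$ denote the reversible stationary distribution of $X(t)$ on $\U_n$; it is proportional to vertex degree. For fixed $\dd$ and large $n$, every vertex of $\U_{1,n}\cup\U_{2,n}$ lies at distance at least $\dd/12$ from $\partial\U$ and therefore has degree $4$, while Remark~\ref{bili1} gives $|\U_{1,n}|=|\U_{2,n}|$. Consequently the function $f:=\mathbf{1}_{\U_{1,n}}-\mathbf{1}_{\U_{2,n}}$ satisfies $\langle f,1\rangle_\pi=0$ and $\|f\|_{L^2(\pi)}^2=\pi(\U_{1,n})+\pi(\U_{2,n})\le 1$. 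The quantity controlled by the lemma is exactly $P_tf(x):=\mathbb{E}_x f(X(t))$.

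I would first establish a uniform-in-$n$ spectral gap $\lambda=\lambda(\U)>0$ for (minus) the generator $2n^2(P_n-I)$ of $X(t)$. Since this generator is the correctly-scaled discrete Laplacian on $\U_n$ with built-in Neumann conditions, the convergence of the walk to reflected Brownian motion established in \cite{BC,wtf,thes} yields Mosco convergence of the associated Dirichlet forms; combined with compactness of the RBM resolvent, whose first nonzero Neumann eigenvalue $\lambda_\infty>0$ comes from the Poincar\'e inequality on bounded smooth $\U$, this forces $\liminf_n\lambda_n\ge\lambda_\infty$. The second ingredient is an on-diagonal heat-kernel bound $p_{T_0}(x,x)\le C\,\pi(x)$, uniform in $x\in\U_n$ and in large $n$, for some $T_0=T_0(\U)$ and $C=C(\U)$; this is a direct consequence of the local CLT in \cite{BC,wtf,thes}, since $p_{T_0}(x,y)/\pi(y)$ approximates the bounded density of RBM at time $T_0$.

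With these two inputs, reversibility gives $\sum_y p_{T_0/2}(x,y)^2/\pi(y)=p_{T_0}(x,x)/\pi(x)\le C$; using the semigroup factorization $P_tf(x)=\sum_y p_{T_0/2}(x,y)\bigl(P_{t-T_0/2}f\bigr)(y)$ and Cauchy--Schwarz, for $t\ge T_0$ one obtains
$$|P_tf(x)|^2\le\frac{p_{T_0}(x,x)}{\pi(x)}\cdot\bigl\|P_{t-T_0/2}f\bigr\|_{L^2(\pi)}^2\le C\,e^{-2\lambda(t-T_0/2)}\|f\|_{L^2(\pi)}^2\le C\,e^{-2\lambda(t-T_0/2)},$$
where the middle step invokes the spectral gap together with $\langle f,1\rangle_\pi=0$. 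Choosing any $D<\lambda$ and a threshold $T=T(\U)$ large enough to absorb the multiplicative constant then gives the claimed bound $|P_tf(x)|\le 2e^{-Dt}$ for $t\ge T$.

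The main obstacle is the uniform spectral gap. The difficulty is that $\U_n$ is a sequence of genuinely distinct graphs and $\partial\U\cap\tfrac1n\Z^2$ has subtle combinatorial structure, so neither the continuum Poincar\'e inequality on $\U$ nor the usual lattice-$\Z^2$ arguments transfer verbatim. The cleanest route is through Mosco convergence of Dirichlet forms, which packages both the bulk scaling and the Neumann boundary behaviour into a single functional-analytic convergence statement that guarantees spectral stability; an alternative is a discrete Cheeger estimate exploiting the locally half-plane-like geometry of $\U_n$ near $\partial\U$ noted in Remark~\ref{graphcon123}.
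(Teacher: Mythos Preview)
Your argument is correct, but the paper takes a considerably more elementary route. Since $|\U_{1,n}|=|\U_{2,n}|$ and every vertex of $\U_{1,n}\cup\U_{2,n}$ has degree $4$, one has $\pi_{RW}(\U_{1,n})=\pi_{RW}(\U_{2,n})$, and hence by the triangle inequality
\[
|\mathbb{P}_x(X(t)\in\U_{1,n})-\mathbb{P}_x(X(t)\in\U_{2,n})|\le 2\,d_{TV}(t).
\]
Submultiplicativity of total variation, $d_{TV}(\ell\,t_{mix}(1/4))\le 2^{-\ell}$, together with the uniform bound $t_{mix}(1/4)=O(1)$ (Lemma~\ref{lmt}, obtained from the evolving-sets bound of Morris--Peres and the isoperimetric inequality of \cite{wtf}) then gives the exponential decay directly. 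No $L^2$-to-$L^\infty$ boosting, heat-kernel diagonal estimate, or Mosco convergence is invoked.

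Your spectral route works and is conceptually clean, but it front-loads the difficulty into establishing the uniform spectral gap via Mosco convergence, a genuinely heavier piece of machinery than what is needed here. The paper instead pushes the uniform-in-$n$ input into the isoperimetric inequality, which it already cites from \cite{wtf}; once $t_{mix}(1/4)=O(1)$ is available, the rest is a two-line total-variation argument. Note also that your alternative suggestion of a discrete Cheeger estimate is essentially the same as the paper's isoperimetric route, so if you had pursued that branch you would have converged with the paper's proof (modulo the TV-versus-$L^2$ packaging).
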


\begin{proof}
Let us consider the random walk on $\U_n$. The following is a standard consequence of the sub-multiplicative nature of the worst case total variation norm $d_{TV}(\cdot)$: 
\begin{equation}\label{subm}
d_{TV}(\ell t_{mix}(1/4))\le 2^{-\ell}
\end{equation} 
where $t_{mix}(1/4)$ is the $\frac{1}{4}$ total variation mixing time, (see \cite[(4.34)]{lpw}). 
Let $\pi_{RW}$ be the stationary measure for the random walk on $\U_n.$ It is well known and easy to verify that for any $z\in \U_n,$ 
\begin{equation}\label{stationmeas}
\pi_{RW}(z)=\frac{d_z}{\sum_{z'\in \U_n}d_{z'}}
\end{equation}
 where $d_z$ is the degree of the vertex $z$ in $\U_n.$ Recall from Setup \ref{para} that,
  $$|\U_{1,n}|= |\U_{2,n}|$$ and also that all points in $\U_{1,n}$ and $\U_{2,n}$ are in the interior of $\U$ and hence have four neighbors for large $n$. Thus 
$$\pi_{RW}(\U_{1,n})=\pi_{RW}(\U_{2,n}).$$  
By definition for any $t>0,$  $$\sup_{{x\in \U_n}\atop {i=1,2}}|\mathbb{P}_x(X(t)\in \U_{i,n})-\pi_{RW}(\U_{i,n})|\le d_{TV}(t).$$
Thus uniformly over $x\in \U_n,$
\begin{align*}
|\mathbb{P}_x(X(t)\in \U_{1,n})-\mathbb{P}_x(X(t)\in \U_{2,n})|&\le  |\mathbb{P}_x(X(t)\in \U_{1,n})-\pi_{RW}(\U_{1,n})|+|\mathbb{P}_x(X(t)\in \U_{2,n})-\pi_{RW}(\U_{2,n})|\\
&\le  2d_{TV}(t).
\end{align*}
The result now follows from \eqref{subm} and the standard fact that 
$t_{mix}(1/4)=O(1)$ on $\U_n$, (see Lemma \ref{lmt}). 
\hfill \end{proof}

\noindent
For any function $f:\U_n \rightarrow \mathbb{R}$ define the laplacian $\Delta f: \U_n \rightarrow \mathbb{R}$, where for any $x \in \U_n,$ 
 \begin{equation}\label{lapnot1}
 \Delta f(x):=f(x)-\frac{1}{d_x}\sum_{y\sim x}f(y),
\end{equation}
($d_x$ is the degree of the vertex $x$ and $y\sim x$ denotes that $y$ is a neighbor of $x$).
\begin{lem}Consider the function $G_n(\cdot)$ on $\U_n.$ Then,
\begin{equation}\label{laplacian}
\Delta(G_n)=\frac{1}{| \U_{1,n}|} (\mathbf{1}( \U_{1,n})-\mathbf{1}( \U_{2,n})),
\end{equation}
where for any subset $A\subset \U_n$, $\mathbf{1}(A)$ denotes the indicator of the set $A$.
\end{lem}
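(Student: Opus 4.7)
The plan is to exploit the fact that the heat semigroup of the continuous-time random walk has generator proportional to the discrete Laplacian, and then integrate by parts in time. Concretely, let
$$h_t(x):=\mathbb{P}_x(X(t)\in \U_{1,n})-\mathbb{P}_x(X(t)\in \U_{2,n}),$$
so that (modulo the additive constant) $G_n(x)=\frac{2n^2}{|\U_{1,n}|}\int_0^\infty h_t(x)\,dt$. Since $X$ is the continuous-time random walk that waits an $\operatorname{Exp}(2n^2)$ time and then jumps uniformly to a neighbor, its infinitesimal generator, acting on functions of $x$, is
$$\mathcal{L}f(x)=2n^2\cdot\frac{1}{d_x}\sum_{y\sim x}(f(y)-f(x))=-2n^2\,\Delta f(x),$$
with $\Delta$ as defined in \eqref{lapnot1}. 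The Kolmogorov backward equation applied to the indicators of $\U_{1,n}$ and $\U_{2,n}$ therefore gives $\partial_t h_t(x) = -2n^2\,\Delta h_t(x)$, where $\Delta$ acts in the $x$ variable.

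Next I would apply $\Delta$ (in $x$) to the definition of $G_n$. Because $\U_n$ is finite, $\Delta$ is a finite linear combination, so exchanging it with the integral is harmless; the exponential tail bound of Lemma~\ref{welldefined} ensures absolute convergence of the resulting integrand as well. Substituting the identity $\Delta h_t=-\tfrac{1}{2n^2}\partial_t h_t$ yields
$$\Delta G_n(x)=\frac{2n^2}{|\U_{1,n}|}\int_0^\infty \Delta h_t(x)\,dt=-\frac{1}{|\U_{1,n}|}\int_0^\infty \partial_t h_t(x)\,dt=\frac{h_0(x)-h_\infty(x)}{|\U_{1,n}|}.$$

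Finally I would evaluate the two boundary values. At $t=0$, $h_0(x)=\mathbf{1}(\U_{1,n})(x)-\mathbf{1}(\U_{2,n})(x)$ by definition. For the $t\to\infty$ limit, recall from the proof of Lemma~\ref{welldefined} that $\pi_{RW}(\U_{1,n})=\pi_{RW}(\U_{2,n})$ because the two sets have equal cardinality and lie in the interior (so all points have degree four); together with the exponential decay in Lemma~\ref{welldefined} this gives $h_t(x)\to 0$ uniformly in $x$, so $h_\infty(x)=0$. Plugging in yields exactly \eqref{laplacian}.

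The only real obstacle is bookkeeping: getting the sign of $\mathcal{L}$ versus $\Delta$ right and justifying the interchange of $\Delta$ with $\int_0^\infty$ and of $\partial_t$ with taking the limit $t\to\infty$. Both interchanges are immediate from Lemma~\ref{welldefined}, and the stationary cancellation $\pi_{RW}(\U_{1,n})=\pi_{RW}(\U_{2,n})$ is precisely what makes the improper integral well-defined in the first place, so no further machinery is needed.
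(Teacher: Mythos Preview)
Your argument is correct. Both your proof and the paper's rest on the same underlying fact---that the generator of $X$ is $-2n^2\Delta$---but the executions differ. You pass through the backward equation $\partial_t h_t=-2n^2\Delta h_t$ and integrate in $t$, invoking Lemma~\ref{welldefined} to kill the boundary term at $t=\infty$. The paper instead applies the strong Markov property at the first jump time: since the expected holding time is $\tfrac{1}{2n^2}$, conditioning on the first step gives directly
\[
G_n(x)=\frac{1}{|\U_{1,n}|}\bigl(\mathbf{1}(\U_{1,n})-\mathbf{1}(\U_{2,n})\bigr)(x)+\frac{1}{d_x}\sum_{y\sim x}G_n(y),
\]
which is the claim after rearranging. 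The first-step argument is a one-liner and avoids the (easy) interchange-of-limits bookkeeping; your semigroup route is slightly more general in that it would transfer verbatim to any Markov process with a known generator.
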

\begin{proof}Proof follows from definition of $G_n$ and looking at the first step of random walk  which by definition is of expected duration $\frac{1}{2n^2}.$  
Thus we have  $$G_n(x)=\frac{1}{|\U_{1,n}|} (\mathbf{1}( \U_{1,n})-\mathbf{1}( \U_{2,n}))(x)+\frac{1}{d_x}\sum_{y\sim x}G_n(y)$$
and hence the lemma.
\end{proof}
\subsection{Main Result}\label{mr}
In this section we state  the main convergence result of this paper.
We start by defining the following function on the domain $\U,$ (recall the sets $\U_1,\U_2$ from Setup \ref{para}):
\begin{equation}\label{laplace}
\tilde f:=\frac{16}{\rm{ area}(\U_{1})}\bigl(\mathbf{1}(\U_{2})-\mathbf{1}(\U_{1})\bigr).
\end{equation}
$16$ is a constant that falls out of some natural integrals involving the heat kernel of Reflected Brownian motion and is not important. One could normalize things to make the constant $1$, however we choose not to do that.\\

\noindent
Recall the functions $\phi$ and $\psi$ from \eqref{confmap1}.
Let $\psi(\U_{i})=:A_i$. 
Then, 
\begin{equation}\label{mappedsource}
\tilde f\circ\phi=\frac{16}{\rm{ area}(\U_{1})}\bigl(\mathbf{1}(A_2)-\mathbf{1}(A_1)\bigr),
\end{equation}
is a function on $\DD.$
For any $\U$ as in Setup \ref{para}, define the function $G_{*}:\overline \U \rightarrow \mathbb{R}$ such that for all $z \in \overline \U$ if $y\in \overline \DD$ is such that $\phi(y)=z$,  then 
\begin{equation}\label{transformed}
G_{*}(z)=\frac{1}{\pi}\int_{|\zeta|<1}\tilde f\circ \phi(\zeta)|\phi'(\zeta)|^2\log(|\zeta-y)(1-\bar{\zeta}y)|^2)d\xi d\eta,
\end{equation}
where $\zeta=\xi+i\eta.$  Recall that by the smoothness assumption on $\U$ the maps $\phi$ and  $\psi$ have extensions across the boundaries of $\DD$ and $\U$ respectively and hence $G_*$ can be defined on $\overline \U.$
 Notice the dependence of $G_*$ on $\dd$ through $\tilde f$. However for brevity we choose to suppress the dependence on $\dd$ (see Setup \ref{para}) in the notation. \\

\nin
Even though the function $G_n$ in \eqref{dgf1} is defined on the graph $\U_n,$ to state the next result, we use linear interpolation  to think of it as a function on the closure of the whole domain, $\overline \U$. 
We are now ready to state one of the main results of this paper.
The interpolation scheme is defined precisely in Subsection \ref{conv1} where the proof of the result appears. Informally it is done in the following way:
\begin{itemize}
\item [(i).] Extend the function $G_{n}$ from $\U_n$ to $\frac{1}{n}\mathbb{Z}^2$ by fixing it to be $0$ outside $\U_n.$
\item [(ii).] Extend the function $G_{n}$ to all the edges of $\frac{1}{n}\mathbb{Z}^2$ by linearly interpolating the values on the vertices.
\item [(iii).] Extend it to each face of $\frac{1}{n}\mathbb{Z}^2$ so that it is a harmonic function on each face given the value on the edges. Thus the function is extended to the entire complex plane $\mathbb{C}.$  By abusing notation a little we still denote the extended function by $G_n$ as well.
\end{itemize}
\begin{thm}\label{convergence} For all small enough $\dd,$
$$\lim_{{m\rightarrow \infty}\atop{n=2^m}}\sup_{z\in \overline{\U}}|G_n(z)-G_{*}(z)|=0.$$
\end{thm}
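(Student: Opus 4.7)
The strategy is to represent both $G_n$ and $G_*$ as time integrals of heat kernels and compare them directly. The continuous function $G_*$, via the explicit Neumann Green's function of $\DD$ (whose kernel is $-\tfrac{1}{2\pi}\log|(\zeta-y)(1-\bar\zeta y)|$) together with the change of variables $w=\phi(\zeta)$, $|\phi'|^2\,d\xi d\eta = dw$, and the conformal invariance of harmonic measure, can be identified (up to an additive constant determined by the normalization freedom in the Neumann Green's function) with
\[
G_*(z)+\text{const}=\int_0^\infty\!\!\int_\U \widetilde p_t(z,w)\,\tilde f(w)\,dw\,dt,
\]
where $\widetilde p_t$ is the transition density of reflected Brownian motion on $\U$. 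Absolute integrability follows from $\int_\U \tilde f\,dw=0$, which holds because $|\U_1|=|\U_2|$, together with the spectral gap of reflected BM on the smooth bounded domain $\U$. On the discrete side, $G_n(x)+c$ is, by definition, exactly the analogous integral against the speeded-up random walk density; the identity $|\U_{1,n}|=|\U_{2,n}|$ (noted in Remark \ref{bili1}) plays the role of the zero-mean condition, and the centering constant $c$ is chosen to match the additive freedom in the continuous identification.

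The main estimate splits the time integral at a threshold $T$. For $t\ge T$, Lemma \ref{welldefined} provides an exponential bound on the discrete side uniformly in $x$ and in large $n$, and the Poincaré inequality for reflected Brownian motion on $\U$ provides the analogous bound on the continuous side, so the contributions of $[T,\infty)$ are both at most $\epsilon$ once $T=T(\epsilon)$ is chosen. For $t\in[t_0,T]$, I would invoke the local CLT and heat kernel convergence results of \cite{BC,wtf,thes}: the density of $X(t)$ under the $2n^2$ speed-up converges, uniformly on $[t_0,T]\times\overline\U\times\overline\U$, to $\widetilde p_t$. Averaging this convergence over $\U_{i,n}$ turns $\frac{2n^2}{|\U_{1,n}|}\P_x(X(t)\in \U_{i,n})$ into a Riemann sum for $\int_{\U_i}\widetilde p_t(z,w)\,dw$, giving uniform convergence of integrands. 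The small-time remainder $[0,t_0]$ is controlled by a crude argument: when $x$ lies macroscopically away from the blobs both probabilities are $O(t_0)$ by diffusive estimates, and when $x$ is near a blob the difference is still $O(t_0)$ by Gaussian bounds in both discrete and continuous settings, so this piece contributes $O(t_0)$ uniformly and can be made small.

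Finally, the interpolation scheme extends $G_n$ from the lattice to $\overline\U$. Outside the blobs $G_n$ is discrete harmonic, so a Harnack-type estimate (combined with the locally half-plane behavior near $\partial\U$ alluded to in Remark \ref{graphcon123}) yields uniform H\"older equicontinuity of the family $\{G_n\}$ up to the boundary. This equicontinuity, together with pointwise convergence $G_n\to G_*$ on a dense set of lattice points (supplied by the previous paragraph, and extended to the harmonic-in-each-face interpolation by the maximum principle), upgrades via Arzel\`a--Ascoli to uniform convergence on $\overline\U$, giving the theorem.

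The principal obstacle I anticipate is the control at and near $\partial\U$: the heat kernel convergence statements in \cite{BC,wtf,thes} are cleanest in the interior, whereas Theorem \ref{convergence} demands uniformity on all of $\overline\U$. This requires both a boundary-adapted version of the local CLT (or reflection arguments exploiting the smoothness of $\partial\U$ and the bi-Lipschitz nature of $\phi,\psi$ recorded in Remark \ref{bili1}) and a uniform Harnack/equicontinuity principle for discrete harmonic functions on the bounded-geometry graph $\U_n$ up to the boundary. Carefully matching the additive constant $c(\dd)$ with the constant arising in the conformal identification of $G_*$ is bookkeeping but must be done explicitly using the specific choice \eqref{specificconst}.
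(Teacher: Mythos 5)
Your overall decomposition — introduce the time-integral representation $G(z)=\frac{2}{\mathrm{area}(\U_1)}\int_0^\infty[p(t,z,\U_1)-p(t,z,\U_2)]dt-c$ as an intermediate object, then prove $G_n\to G$ by splitting time at thresholds and invoking the local CLT, and separately prove $G=G_*$ — is exactly the paper's strategy. The tail estimates (Lemma \ref{welldefined} on the discrete side, the spectral-gap/exponential mixing on the continuous side), the Riemann-sum argument on the middle time range, and the crude $O(t_0)$ bound on short times all match the paper's Lemma \ref{convergence1}.

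However, you misallocate where the real work lies. You treat the identification $G=G_*$ (i.e.\ that the time-integrated RBM kernel against $\tilde f$ coincides with the closed-form \eqref{transformed}) as bookkeeping, asserting it ``can be identified'' via the known Neumann Green's function of $\DD$ and a change of variables. That step is the content of the paper's Theorem \ref{closedform1} and occupies Section~\ref{sub:cfl}. The difficulty is that one must first establish that $G$ \emph{solves} the Neumann problem $\Delta G = \tilde f/4$ with $\partial G/\partial\nu=0$ in the precise sense needed by the uniqueness theorem (Lemma \ref{pde1}, quoted from \cite{bca}); this requires the RBM heat-kernel regularity results from \cite{sato} (Theorem \ref{properties}), which apply only to $C^\infty$ test densities, so the paper must introduce smooth approximants $g_j$ of the indicators $\mathbf{1}(\U_i)$, show the truncated smoothed objects $G_{j,T}$ satisfy the PDE, apply the uniqueness/representation theorem to each $G_{j,T}$, and then pass to the limit in $j,T$ while controlling the additive constants $c_{j,T}$ via \eqref{specificconst}. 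None of this is visible in your proposal, and ``conformal invariance of harmonic measure'' is not the mechanism — the mechanism is uniqueness of the Neumann boundary value problem together with the conformal covariance of the Laplacian (Lemma \ref{pushforward}).

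Conversely, the obstacle you flag as principal — boundary control in the local CLT, and the Arzelà--Ascoli/Harnack machinery you introduce to compensate — is a non-issue and an unnecessary detour. The cited local CLT (\cite[Theorem 2.12]{wtf}, the paper's Theorem \ref{clt}) already gives uniform convergence $\sup_{a\le t\le b}\sup_{x,y\in\overline\U}|p_n(t,x,y)-p(t,x,y)|\to 0$ on all of $\overline\U\times\overline\U$, so uniform convergence of $G_n$ to $G$ falls out directly from the three-piece time decomposition with no equicontinuity extraction. Dropping the Arzelà--Ascoli paragraph and instead spelling out the PDE argument for $G=G_*$ (smoothing, Theorem \ref{properties}, Lemma \ref{pde1}, and the limit $c_{j,T}\to c$) would bring your proposal into line with a complete proof.
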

\noindent
Before proving the above, we remark (see \cite[Lemma 5.3]{ero}) that as $\dd$ goes to $0$ the function $G_*(\cdot)$ approaches (up to an explicit multiplicative constant) the function  \begin{equation}\label{recall1}
 \log\left|\frac{\psi(\cdot)-i}{\psi(\cdot)+i}\right|.
\end{equation}

\section{Proof of Theorem \ref{convergence}}\label{conformcon}
\nin The proof of Theorem \ref{convergence} involves developing some tools using convergence of random walk on $\U_n$ to Reflected Brownian motion on $\overline\U.$ For a formal definition of Reflected Brownian motion on $\overline\U$ see \cite[Definition 2.7]{wtf}. Also see \cite{bass,chendef,BC}. Throughout the rest of the article we will denote it by $B_t.$
Consider the function, for $z \in \overline \U,$ 
\begin{equation}\label{discdef}
G(z):=\frac{2}{\rm{ area}(\U_{1})}\int_{0}^{\infty}[P_{z}(B_{t} \in \U_1)-P_{z}(B_{t} \in \U_2)]\,dt-c.
\end{equation}  where for $i=1,2,$ $P_{z}(B_{t} \in \U_i)$ denotes the probability that started from $z,$ $B_t$  is in $\U_i.$ 
\nin
The constant $c$ is chosen such that the integral of $G$ along $\partial \U$ is $0$ where we  parametrize the boundary $\partial U$ by $\theta\in [0,2\pi)$ via the conformal map $\phi$ \eqref{confmap1}. Formally we fix $c$ such that 
\begin{equation}\label{specificconst} \int_{|\zeta|=1} G\circ \phi (\zeta)\frac{d\zeta}{\zeta}=0.
\end{equation}

\noindent
Compare the expression of $G(\cdot)$ with $G_n(\cdot)$ defined in \eqref{dgf1}  (note that the constant $c$ is the same in both expressions). \\
\nin
Before providing formal arguments we sketch the general outline of the proof first. The proof of  Theorem \ref{convergence} has two parts: in the first part we show that $G_{n}$ converges to $G.$ This will follow by convergence of the random walk measure on $\U_n$ to Reflected Brownian motion $B_t.$ For more on this see \cite{BC}, \cite{wtf} and the references therein. Thus the only remaining step then is to show that indeed  $$G_*=G.$$ This will be proved using the fact that the density for Reflected Brownian motion is a fundamental solution to the Neumann problem and hence the function $G$ roughly satisfies,
\begin{equation}\label{roughpde}
\left(\frac{\partial^2}{\partial x^2}+\frac{\partial^2}{\partial y^2}\right)  G = \frac{4}{\rm{area}(\U_1)}(\mathbf{1}(\U_2)-\mathbf{1}(\U_1))
\end{equation}
 with Neumann boundary conditions. 
One then checks that $G_*$ is a solution to the above Neumann problem as well. The proof is then complete by uniqueness of the solution of such a problem which allows us to conclude that $G_*=G$.
We adopt the following standard notation: 
\begin{equation}\label{contlap1}
\Delta :\equiv\frac{\partial^2}{\partial x^2}+\frac{\partial^2}{\partial y^2}.
\end{equation}
Recall \eqref{lapnot1}.
Thus we use $\Delta$ to denote the laplacian in both the continuous and discrete setting since there will be no scope of confusion. 

 \subsection{Continuum version of $G_n$}\label{sub:contver}
We begin by studying the function $G(z)$ defined in \eqref{discdef}.
Let 
$B_{t}$ be Reflected Brownian motion (RBM) on $\overline{\U}$ and  $p(t,x,y)$ be the heat kernel of $B_t$ defined on $$\mathbb{R_{+}\times \overline{\U}\times \overline{\U}},$$ i.e.\, $p(t,x,y)$ is the density of RBM started from $x$ at time $t$ at point $y$.
Before proceeding we state some classical results about regularity properties of $p(t,x,y).$ 
\begin{thm}\label{properties} \noindent
\begin{itemize}
\item [a.]\label{continuity}\cite[Lemma 2.1]{sato} $p(t,x,y)$ is continuous on $(0,\infty)\times \overline\U \times \overline\U.$ 
\item [b.]\label{continuity1}\cite[Theorem 2.1]{sato} Let $f$ be a compactly supported $C^{\infty}$ function defined on $\U$. Then $$\int_{\overline\U}p(t,x,y)f(y)dy$$ is continuous on $(0,\infty)\times \overline \U$.
\item [c.]
\label{heatequation}\cite[Theorem 2.2]{sato} Let $f$ be a compactly supported $C^{\infty}$ function defined on $\U$. Then $$u(t,x)=\int_{0}^{t}ds\int_{\overline\U}p(s,x,y)f(y)dy$$ has the following properties :
\begin{itemize}
\item [i.]is continuous on $(0,\infty)\times \overline\U$, continuously differentiable in $t$ in $(0,\infty)$ and of class $C^{2}(\U)$ and $C^{1}(\overline \U)$ as a function of $x$,\\
\item [ii.]$\left(\frac{\partial}{\partial t}-\frac{1}{2}\Delta\right)u(t,x)=f(x),$\\
\item [iii.]$\frac{\partial}{\partial \nu}u(t,x)=0,$ where $\frac{\partial}{\partial \nu}$ denotes the normal derivative and $x \in \partial \U,$\\
\item [iv.]$\displaystyle{\lim_{t\rightarrow0}u(t,x)=0}$ uniformly on $\overline\U.$
\end{itemize}
\end{itemize}
\end{thm}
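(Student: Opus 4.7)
The plan is to establish parts (a), (b), (c) in order, each building on the previous. For part (a), I would construct the heat kernel $p(t,x,y)$ via the spectral expansion of the Neumann Laplacian. Since $\U$ is a bounded smooth domain, the operator $-\tfrac12\Delta_N$ on $\overline{\U}$ has a discrete spectrum $0=\lambda_0<\lambda_1\le\lambda_2\le\cdots\to\infty$ with a complete orthonormal basis $\{\phi_k\}$ of eigenfunctions which, by standard elliptic boundary regularity (via local flattening of $\partial\U$, or equivalently via the extension of the conformal map $\phi$ of Remark \ref{bili1}), are smooth up to the boundary and satisfy $\partial\phi_k/\partial\nu=0$ on $\partial\U$. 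The heat kernel then admits the representation
$$p(t,x,y)=\sum_{k=0}^{\infty} e^{-\lambda_k t/2}\,\phi_k(x)\phi_k(y).$$
Using Weyl asymptotics $\lambda_k\sim ck$ in dimension two together with the standard polynomial-in-$\lambda_k$ sup-norm bounds for $\phi_k$, the series converges absolutely and uniformly on compact subsets of $(0,\infty)\times\overline{\U}\times\overline{\U}$, yielding joint continuity of $p$.

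Part (b) is then immediate: for $f\in C^\infty_c(\U)$, the integrand $p(t,x,y)f(y)$ is continuous in $(t,x)$ for each fixed $y\in\mathrm{supp}(f)$ and is dominated on any compact $[\tau,T]\times\overline{\U}$ by a constant multiple of $\|f\|_\infty$ (using Gaussian upper bounds on $p$, valid uniformly for $t\ge\tau>0$). Dominated convergence gives joint continuity of $\int_{\overline{\U}} p(t,x,y)f(y)\,dy$ on $(0,\infty)\times\overline{\U}$.

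For part (c), the argument is Duhamel's principle. Set $v(t,x):=\int_{\overline{\U}} p(t,x,y)f(y)\,dy$. Then $v(t,x)\to f(x)$ as $t\to 0^+$ uniformly on $\overline{\U}$ (since $f$ is smooth, compactly supported, and $p(t,\cdot,\cdot)$ is an approximate identity), while $(\partial_t-\tfrac12\Delta_x)v=0$ with Neumann boundary condition $\partial v/\partial\nu=0$ (both inherited termwise from the spectral expansion). Writing $u(t,x)=\int_0^t v(s,x)\,ds$ and differentiating under the integral, which is justified by the spectral representation and interior/boundary parabolic regularity, gives $\partial_t u(t,x)=v(t,x)$ and
$$\tfrac12\Delta_x u(t,x)=\int_0^t \tfrac12\Delta_x v(s,x)\,ds=\int_0^t \partial_s v(s,x)\,ds=v(t,x)-f(x),$$
so $(\partial_t-\tfrac12\Delta)u=f$. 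The Neumann condition on $u$ is inherited from that of $v$ by exchanging normal derivative and time integral, and the initial condition $\lim_{t\to 0}u(t,x)=0$ uniformly follows from uniform boundedness of $v$ near $t=0$.

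The main obstacle is the boundary regularity upstream: establishing that the Neumann eigenfunctions are genuinely $C^\infty(\overline{\U})$ and that normal derivatives can be exchanged with both $\sum_k$ and $\int_0^t$. This is where the smoothness assumption on $\partial\U$ is essential; with an analytic boundary, one can locally flatten via boundary charts (or pull back to $\overline{\DD}$ using the extended conformal map from Remark \ref{bili1}) to reduce to the half-plane model, where the Neumann heat kernel is explicit (even reflection of the Gaussian) and all regularity assertions are transparent. The results of \cite{sato} carry out precisely this localization in detail.
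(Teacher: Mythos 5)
The paper does not prove Theorem \ref{properties}; it is stated as a direct citation of results from \cite{sato} (Lemma 2.1, Theorem 2.1, Theorem 2.2 there), with the authors noting only that those results are proved in greater generality than needed here. So there is no in-paper argument to compare against, and you have supplied an independent proof where the authors chose to cite.

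That said, your spectral-theoretic sketch is a legitimate and essentially correct alternative route. The expansion $p(t,x,y)=\sum_k e^{-\lambda_k t/2}\phi_k(x)\phi_k(y)$ together with Weyl asymptotics and polynomial sup-norm growth of Neumann eigenfunctions does give uniform convergence on compacts of $(0,\infty)\times\overline{\U}\times\overline{\U}$, hence part (a); part (b) then follows by dominated convergence; and the Duhamel computation $\tfrac12\Delta u=\int_0^t\partial_s v\,ds=v(t,\cdot)-f$, with $\partial_t u=v$, correctly yields $(\partial_t-\tfrac12\Delta)u=f$, while the Neumann condition and the uniform initial limit are inherited termwise as you describe. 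You are right that the real content is the boundary regularity that makes the termwise differentiation and the interchange of $\partial/\partial\nu$ with $\sum_k$ and $\int_0^t$ legitimate, and you correctly identify the analytic boundary (equivalently, the conformally extended map of Remark \ref{bili1}) as what makes this manageable. The reference \cite{sato} very likely proceeds differently, via potential-theoretic or parabolic PDE estimates rather than eigenfunction expansions, so your route is a genuine alternative: the spectral argument is cleaner and more self-contained for a bounded smooth domain, at the cost of requiring eigenfunction regularity and sup-norm bounds up to the boundary, which is a nontrivial input that you invoke but do not derive. For the purposes of this paper, the citation is the economical choice; your sketch would need that elliptic boundary-regularity input filled in to be a complete substitute.
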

\nin 
The results quoted from \cite{sato} are actually proved in much more generality. However for our purposes the above versions would suffice.
\nin
For $i=1,2,$ recall $\U_i$ from Setup \ref{para} and let, 
\begin{equation}\label{abbre12}
p(t,x,\U_i):=\int_{\U_i}p(t,x,y)dy.
\end{equation}
Thus the expression in  \eqref{discdef} is the same as, 
\begin{equation}\label{rewrite1}
G(z)=\frac{2}{\rm{ area}(\U_{1})}\int_{0}^{\infty}[p(t,z,\U_{1})-p(t,z,\U_{2})]\,dt-c.
\end{equation}
Recall that the above expression depends on $\dd$ (Setup \ref{para}) which determines the sets $\U_1$ and $\U_2$.
However we will suppress the dependence on $\dd$ for notational brevity since there is little chance for confusion. 

\begin{remark}\label{contint}
The fact that the above integral is absolutely convergent directly follows from the following mixing lemma and the fact that by choice $\rm{area}(\U_1)=\rm{area}(\U_2)$. 
\end{remark}

\begin{lem}\label{bh}\cite[Theorem 2.4]{BassHsu}  There exists constants $c=c(\U),T=T(\U)>0$ such that for all $x,y\in \overline{\U}$ and $t>T,$
$$|p(t,x,y)-\frac{1}{\rm{ area }(\U)}|\le e^{-ct}.$$
\end{lem}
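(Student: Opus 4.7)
The plan is to use the spectral decomposition of the Neumann heat kernel on the smooth bounded domain $\overline{\U}$. On such a domain the Neumann Laplacian $-\tfrac{1}{2}\Delta$ has purely discrete spectrum $0=\lambda_0<\lambda_1\le\lambda_2\le\cdots\to\infty$, with a complete $L^2(\overline{\U})$-orthonormal basis of eigenfunctions $\{\phi_k\}_{k\ge 0}$. The ground state is the constant $\phi_0\equiv 1/\sqrt{\mathrm{area}(\U)}$ with eigenvalue $0$. Standard heat kernel theory gives the expansion
\begin{equation*}
p(t,x,y)\;=\;\sum_{k\ge 0} e^{-\lambda_k t}\,\phi_k(x)\,\phi_k(y),
\end{equation*}
convergent uniformly on $[\e,\infty)\times\overline{\U}\times\overline{\U}$ for every $\e>0$, whose $k=0$ term is exactly $1/\mathrm{area}(\U)$.

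The heart of the argument is a Cauchy--Schwarz bound on the remainder. For $t\ge 1$,
\begin{equation*}
\bigl|p(t,x,y)-\tfrac{1}{\mathrm{area}(\U)}\bigr|
\;=\;\Bigl|\sum_{k\ge 1}e^{-\lambda_k t}\phi_k(x)\phi_k(y)\Bigr|
\;\le\;\Bigl(\sum_{k\ge 1}e^{-\lambda_k t}\phi_k(x)^2\Bigr)^{1/2}\Bigl(\sum_{k\ge 1}e^{-\lambda_k t}\phi_k(y)^2\Bigr)^{1/2}.
\end{equation*}
Since $\lambda_k\ge\lambda_1$ for $k\ge 1$, one has $e^{-\lambda_k t}\le e^{-\lambda_1(t-1)}e^{-\lambda_k}$, so
\begin{equation*}
\sum_{k\ge 1}e^{-\lambda_k t}\phi_k(z)^2\;\le\;e^{-\lambda_1(t-1)}\sum_{k\ge 1}e^{-\lambda_k}\phi_k(z)^2\;\le\;e^{-\lambda_1(t-1)}\,p(1,z,z).
\end{equation*}
By Theorem~\ref{properties}(a) the function $(x,y)\mapsto p(1,x,y)$ is continuous on the compact set $\overline{\U}\times\overline{\U}$, so $M:=\sup_{z\in\overline{\U}}p(1,z,z)<\infty$. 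This yields
\begin{equation*}
\bigl|p(t,x,y)-\tfrac{1}{\mathrm{area}(\U)}\bigr|\;\le\;M e^{\lambda_1}\,e^{-\lambda_1 t},
\end{equation*}
and picking any $c\in(0,\lambda_1)$ together with $T$ large enough that $Me^{\lambda_1}e^{-\lambda_1 t}\le e^{-ct}$ for $t\ge T$ completes the proof.

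The key analytic input is the existence of a positive spectral gap $\lambda_1>0$: this is the Poincar\'e inequality on $H^1(\U)/\mathbb{R}$, which reduces to compactness of the embedding $H^1(\U)\hookrightarrow L^2(\U)$ via Rellich--Kondrachov (valid since $\U$ is smooth, hence has the extension property). The same compactness makes the Neumann heat semigroup trace-class for $t>0$ and produces the eigenfunction expansion. The main obstacle to a fully self-contained write-up would be setting up this spectral theory carefully up to the boundary; however the excerpt already quotes the necessary regularity of $p$ from \cite{sato}, and the uniform pointwise bound on $p(1,\cdot,\cdot)$ follows for free from continuity on a compact set, so no further parabolic regularity work (e.g.\ Nash--Moser) is needed.
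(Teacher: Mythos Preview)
The paper does not prove this lemma at all; it is simply quoted as a black box from \cite[Theorem~2.4]{BassHsu}. Your spectral-theoretic argument is correct and supplies a self-contained proof that fits well with what the paper already assumes: the analyticity of $\partial\U$ is more than enough for the $H^1$ extension property, hence for Rellich--Kondrachov, discreteness of the Neumann spectrum, and a strictly positive spectral gap $\lambda_1>0$; the eigenfunction expansion of $p(t,x,y)$ is standard; and the Cauchy--Schwarz reduction of the pointwise remainder to the on-diagonal value $p(1,z,z)$, combined with the continuity from Theorem~\ref{properties}(a), is a clean way to get the uniform bound without any additional parabolic regularity input. The only step worth being slightly more careful about in a fully rigorous write-up is justifying the pointwise (not just $L^2$) eigenfunction expansion of $p$ on $\overline\U\times\overline\U$; this follows, for instance, from the semigroup identity $p(t,x,y)=\int p(t/2,x,z)p(t/2,z,y)\,dz$ together with $p(t/2,x,\cdot)\in L^2$, and is in any case implicit in the regularity results already cited from \cite{sato}.
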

\nin
Note that to be able to use Theorem \ref{properties} to prove \eqref{roughpde}  one has to approximate the indicator functions on $\U_1,\U_2$ by $C^{\infty}$ functions.
Let $g_{1},g_{2}\ldots$ be $C^\infty$ functions taking values in the interval $[0,1]$ such that for any $j \in \mathbb{N}$ and $z \in \mathbb{C}$
\begin{eqnarray*}
g_{j}(z)=\left \{\begin{array}{cc}
1 & |z|\le 1-\frac{1}{j}\\\\
0 & |z|\ge 1-\frac{1}{2j}.
\end{array}
\right.
\end{eqnarray*}
That is $g_j$'s form a sequence of smooth functions approximating from below the indicator function on the unit ball. 
For any $a\in \mathbb{C},$ $b>0$ and $j\in \mathbb{N}$ we denote by $g_{j}(a,b,\cdot),$ the function such that for any $z \in \mathbb{C}$ $$g_{j}(a,b,z)=g_{j}(\frac{z-a}{b}),$$
i.e. $g_{j}(a,b,\cdot)$ approximates the indicator function on the ball $B(a,b).$
\noindent
Recall from Setup \ref{para} that $\U_1,\U_2$ have centers $y_1,y_2$ and radius $\frac{\dd}{4}.$
For brevity let $\tilde{\dd}:=\frac{\dd}{4}$.
The next easy lemma uses the $g_j$'s to approximate the integrals appearing in \eqref{rewrite1}.
\begin{lem}\label{smoothapprox}Given any $T$ for every $\e>0$ there exists $J$ such that for all $j>J$ and $i=1,2$
$$\sup_{z\in \overline \U}\left|\int_{0}^{T}\bigl[\int_{\overline \U}p(t,z,\zeta)g_j(y_i,\tilde \dd,\zeta)d\xi d\eta-p(t,z,\U_i)\bigr]dt\right|\le \e$$ where $\zeta=\xi+i\eta.$ 
\end{lem}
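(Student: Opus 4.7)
The plan is to exploit that the smooth cutoff $g_j(y_i,\tilde\dd,\cdot)$ and the indicator $\mathbf{1}(\U_i)$ agree outside a thin annulus, and differ by at most $1$ everywhere. From the construction of $g_j$, the two functions coincide whenever $|\zeta-y_i|\le(1-1/j)\tilde\dd$ or $|\zeta-y_i|\ge\tilde\dd$, so their difference is supported on
\[
A_{j,i}:=\{\zeta\in\C:(1-1/j)\tilde\dd<|\zeta-y_i|<\tilde\dd\},
\]
whose area is $\pi\tilde\dd^2(1-(1-1/j)^2)=O(\tilde\dd^2/j)$. Moreover, since $d(y_i,\partial\U)>\dd/2>\tilde\dd$ by Setup \ref{para}, one has $A_{j,i}\subset\U$ so no issue arises at the boundary. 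The quantity to be estimated is therefore at most
\[
\sup_{z\in\overline\U}\int_0^T\int_{A_{j,i}}p(t,z,\zeta)\,d\xi\,d\eta\,dt.
\]

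The main obstacle is that $p(t,z,\zeta)$ exhibits the usual two-dimensional heat-kernel singularity as $t\to 0^+$ with $\zeta$ near $z$, so a naive uniform bound on $p$ over $[0,T]\times\overline\U\times\overline\U$ is unavailable. I would circumvent this by splitting the time integral at a small threshold $\tau>0$, treating the two regimes separately.

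For the initial piece $t\in[0,\tau]$, I would integrate in the spatial variables first; since reflected Brownian motion on $\overline\U$ is conservative, $\int_{\overline\U}p(t,z,\zeta)\,d\xi\,d\eta=1$ for every $t>0$ and $z\in\overline\U$, giving
\[
\int_0^\tau\int_{A_{j,i}}p(t,z,\zeta)\,d\xi\,d\eta\,dt\le\int_0^\tau 1\,dt=\tau,
\]
uniformly in $z$ and $j$. For the bulk piece $t\in[\tau,T]$, Theorem \ref{properties}(a) gives continuity of $p$ on $(0,\infty)\times\overline\U\times\overline\U$, hence a uniform bound $p\le M=M(\tau,T,\U)$ on the compact set $[\tau,T]\times\overline\U\times\overline\U$. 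Therefore
\[
\int_\tau^T\int_{A_{j,i}}p(t,z,\zeta)\,d\xi\,d\eta\,dt\le MT\,|A_{j,i}|=O\!\left(\frac{MT\tilde\dd^2}{j}\right),
\]
again uniformly in $z$.

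Given $\e>0$, I would first pick $\tau:=\e/2$; with $\tau$ fixed, $M=M(\tau,T,\U)$ is a fixed constant, so I can then choose $J$ large enough to force $MT\tilde\dd^2/j<\e/2$ for all $j>J$. Adding the two contributions yields the desired $\e$-bound, uniformly in $z\in\overline\U$ and for both $i=1,2$. The one subtle point is the uniform-in-$z$ control of the small-time piece, which is handled cleanly by performing the spatial integration first rather than trying to control $p$ pointwise.
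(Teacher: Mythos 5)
Your proof is correct and takes essentially the same approach as the paper's: split the time integral at $\e/2$, bound the small-time contribution by a trivial probability (the paper observes each quantity is at most $1$, you invoke conservation of RBM mass, which is equivalent here), and control the bulk piece via boundedness of $p$ on $[\e/2,T]\times\overline\U\times\overline\U$ from Theorem \ref{properties}(a) together with the smallness of the set where $g_j$ and $\mathbf{1}(\U_i)$ disagree. The only cosmetic difference is that the paper phrases the last step as $L^1$ convergence \eqref{l1conv} of $g_j$ to the indicator, while you make it quantitative by computing the $O(\tilde\dd^2/j)$ area of the supporting annulus; the two are interchangeable.
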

\begin{proof}  
We prove it only for the case $i=1.$
Using the trivial observation that for all $t>0$, both $$p(t,z,\U_1) \mbox{ and } \int_{\overline \U}p(t,z,\zeta)g_j(y_1,\tilde \dd,\zeta)d\xi d\eta  \le 1,$$  and $p(t,z,\U_1)=\int_{\U}p(t,z,\zeta)\mathbf{1}(\zeta\in \U_1)d\xi d\eta$ it suffices to show ,
$$\sup_{z\in \overline \U}\left|\int_{\e/{2}}^{T}\bigl[\int_{\overline \U}p(t,z,\zeta)\left(g_j(y_1,\tilde \dd,\zeta)-\mathbf{1}(\zeta\in \U_i)\right )d\xi d\eta\bigr]dt\right|\le \frac{\e}{2}.$$
Now by Lemma \ref{continuity}, $p(t,z,\zeta)$ is bounded on the cylinder $[\e/2,T]\times \overline {\U}\times \overline {\U}$.  Also clearly,
\begin{equation}\label{l1conv}
\lim_{j\rightarrow \infty}\int_{\overline {\U}}|g_j(y_1,\tilde \dd,\zeta)-\mathbf{1}(\zeta\in \U_1)|d\xi d\eta=0.
\end{equation}
Thus we are done.
\end{proof}
\nin
In the next couple of lemmas we approximate the Green function \eqref{rewrite1} in terms of the functions $g_j(\cdot,\cdot,\cdot).$ 

\begin{lem}\label{approx1}
For $i=1,2,$ given $\e>0,$ there exists  $T=T(\e)$ such that for all $t\ge T$ and all large enough $j$,
$$
\sup_{z\in \overline \U}\left|\int_{\overline \U}p(t,z,\zeta)g_j(y_i,\tilde \dd,\zeta)d \xi d\eta- \frac{\rm{area}(\U_{1})}{\rm{area}(\U)}\right|\le \e, 
$$
where $\zeta=\xi+i\eta.$
\end{lem}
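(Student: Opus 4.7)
The plan is to combine the mixing estimate of Lemma~\ref{bh} with a straightforward $L^1$-approximation argument for $g_j$. Roughly, for large $t$ the heat kernel $p(t,z,\zeta)$ is nearly the uniform density $1/\mathrm{area}(\U)$ regardless of $z$, so the integral against $g_j$ becomes, up to negligible error, the area of the smoothed ball $\{g_j(y_i,\tilde\dd,\cdot)\}$ divided by $\mathrm{area}(\U)$; and for large $j$ this smoothed area is close to $\mathrm{area}(\U_i)=\mathrm{area}(\U_1)$.

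First, fix $\e>0$. By Lemma~\ref{bh} there exist $c=c(\U)>0$ and $T_0=T_0(\U)$ such that for all $t\ge T_0$ and all $z,\zeta\in\overline\U$,
\[
\Bigl|p(t,z,\zeta)-\tfrac{1}{\mathrm{area}(\U)}\Bigr|\le e^{-ct}.
\]
Choose $T\ge T_0$ large enough so that $e^{-cT}\cdot\mathrm{area}(\U)\le \e/2$.

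Next, for each $t\ge T$ decompose
\[
\int_{\overline\U} p(t,z,\zeta)\,g_j(y_i,\tilde\dd,\zeta)\,d\xi\,d\eta
=\int_{\overline\U}\Bigl[p(t,z,\zeta)-\tfrac{1}{\mathrm{area}(\U)}\Bigr]g_j(y_i,\tilde\dd,\zeta)\,d\xi\,d\eta
+\frac{1}{\mathrm{area}(\U)}\int_{\overline\U} g_j(y_i,\tilde\dd,\zeta)\,d\xi\,d\eta.
\]
Since $0\le g_j\le 1$ is supported in $\overline\U$ (because by Setup~\ref{para}, $d(y_i,\partial\U)>\dd/2>\tilde\dd$, so $B(y_i,\tilde\dd)\subset\U$), the first summand is bounded uniformly in $z$ by $e^{-ct}\cdot\mathrm{area}(\U)\le \e/2$.

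For the second summand, observe that $g_j(y_i,\tilde\dd,\cdot)\to \mathbf 1(B(y_i,\tilde\dd))=\mathbf 1(\U_i)$ pointwise and in $L^1$ as $j\to\infty$, so for $j$ sufficiently large,
\[
\left|\int_{\overline\U} g_j(y_i,\tilde\dd,\zeta)\,d\xi\,d\eta - \mathrm{area}(\U_i)\right|\le \tfrac{\e}{2}\cdot\mathrm{area}(\U).
\]
Since $\mathrm{area}(\U_1)=\mathrm{area}(\U_2)$ by the construction in Setup~\ref{para}, this gives
\[
\left|\frac{1}{\mathrm{area}(\U)}\int_{\overline\U} g_j(y_i,\tilde\dd,\zeta)\,d\xi\,d\eta - \frac{\mathrm{area}(\U_1)}{\mathrm{area}(\U)}\right|\le \tfrac{\e}{2}.
\]
Combining the two bounds proves the uniform estimate. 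There is no real obstacle here: the only thing to check carefully is that every bound is uniform in $z\in\overline\U$, which is automatic from the uniform mixing bound of Lemma~\ref{bh}, and that $j$ can be chosen independently of $z$ (it can, since the $L^1$-bound on $g_j-\mathbf 1(\U_i)$ does not involve $z$).
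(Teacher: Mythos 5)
Your proof is correct and follows essentially the same two-step argument as the paper: using Lemma~\ref{bh} to replace $p(t,z,\zeta)$ by the uniform density $1/\mathrm{area}(\U)$ up to an $O(e^{-ct})$ error uniform in $z$, and then using the $L^1$-convergence $g_j(y_i,\tilde\dd,\cdot)\to\mathbf 1(\U_i)$ to control the remaining term, combining via the triangle inequality. The only difference is cosmetic (you spell out explicitly that $B(y_i,\tilde\dd)\subset\U$ and quantify the $\e/2$ splits), so this matches the paper's proof.
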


\begin{proof}
By Lemma \ref{bh} for large $t$ and all $j$,
\begin{equation}\label{exptail}
\sup_{z\in \overline \U}\int_{\overline \U}\left|p(t,z,\zeta)-\frac{1}{\rm{ area}(\U)}\right|g_j(y_i,\tilde \dd,\zeta)d \xi d \eta=O(e^{-ct}). 
\end{equation}
Now by \eqref{l1conv} for large $j,$
$$\frac{1}{\rm{area}(\U)}\left|\int_{\overline \U} g_j(y_i,\tilde \dd,\zeta)d \xi d \eta-\rm{area}(\U_{1})\right|\le \e.$$
Thus the above two statements along with triangle inequality complete the proof.
\end{proof}
\nin
Given $j\in \mathbb{N}$ and $T>0$ define, 
\begin{equation}\label{smoothgreen}
G_{j,T}(z)=\frac{2}{\rm{area} (\U_1)}\int_{0}^{T}dt \left[\int_{\overline \U}p(t,z,\zeta)[g_j(y_1,\tilde \dd,\zeta)-g_j(y_2,\tilde \dd,\zeta)]d \xi d \eta\right]-c,
\end{equation}
where $c$ is the same as in \eqref{discdef} and $\zeta=\xi+i\eta.$. We then have the following lemma showing that $G_{j,T}$ approximate $G$ as $j,T$ go to infinity.
\begin{lem}\label{approxfinite}$$\lim_{j,T\rightarrow \infty } \sup_{z \in \overline \U}|G_{j,T}(z)-G(z)|=0.$$
\end{lem}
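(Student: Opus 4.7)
The plan is to write $G(z) - G_{j,T}(z)$ as the sum of a \emph{tail} piece (integrating $p(t,z,\U_1) - p(t,z,\U_2)$ from $T$ to $\infty$) and a \emph{bulk} piece (the difference between the true integrand involving indicator functions and the smoothed integrand involving $g_j(y_i,\tilde\dd,\cdot)$, integrated over $[0,T]$). Given $\e>0$, I will first choose $T$ large enough that the tail is at most $\e/2$ uniformly in $z$, and then, with this $T$ fixed, choose $J$ so large that for $j\ge J$ the bulk piece is at most $\e/2$ uniformly in $z$.

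For the tail, the essential ingredient is Lemma \ref{bh} together with the fact (Setup \ref{para}, Remark \ref{bili1}) that $\mathrm{area}(\U_1) = \mathrm{area}(\U_2)$. Indeed, writing $p(t,z,\U_i) = \int_{\U_i} p(t,z,y)\,dy$ and applying Lemma \ref{bh}, for all $t\ge T_0(\U)$,
\begin{equation*}
\left|p(t,z,\U_i) - \frac{\mathrm{area}(\U_i)}{\mathrm{area}(\U)}\right| \le \mathrm{area}(\U_i)\,e^{-ct},
\end{equation*}
uniformly in $z\in\overline\U$ and $i=1,2$. Since the two constants $\mathrm{area}(\U_i)/\mathrm{area}(\U)$ coincide, subtracting gives $|p(t,z,\U_1)-p(t,z,\U_2)| \le 2\,\mathrm{area}(\U_1)\,e^{-ct}$, and hence the tail integral from $T$ to $\infty$, after normalization by $2/\mathrm{area}(\U_1)$, is $O(e^{-cT})$ uniformly in $z$. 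In particular it can be made smaller than $\e/2$ by choosing $T$ large.

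For the bulk piece, with $T$ now fixed, Lemma \ref{smoothapprox} applies directly to each of the two terms (for $i=1,2$): it provides a threshold $J=J(T,\e)$ such that for $j\ge J$,
\begin{equation*}
\sup_{z\in\overline\U}\left|\int_0^T\!\!\int_{\overline\U} p(t,z,\zeta)\,g_j(y_i,\tilde\dd,\zeta)\,d\xi\,d\eta\,dt - \int_0^T p(t,z,\U_i)\,dt\right| \le \frac{\e\,\mathrm{area}(\U_1)}{8}.
\end{equation*}
Adding the bounds for $i=1$ and $i=2$ and multiplying by $2/\mathrm{area}(\U_1)$ shows that the bulk piece is at most $\e/2$ uniformly in $z$. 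Combining the two estimates yields $\sup_z|G(z)-G_{j,T}(z)|\le \e$ for all sufficiently large $j$ and $T$, which is the claim.

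There is no genuine obstacle here: the whole point of Remark \ref{contint} and Lemmas \ref{bh} and \ref{smoothapprox} has been to make exactly this kind of tail-plus-bulk decomposition routine. The only mild care needed is to fix $T$ \emph{before} selecting $j$, since the threshold $J$ in Lemma \ref{smoothapprox} depends on $T$.
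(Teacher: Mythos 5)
Your tail-plus-bulk decomposition is the right idea and follows the paper's own strategy, but there is a quantifier issue you have not addressed. The claim is a double limit: for every $\e > 0$ you must produce thresholds $J$ and $T_0$ such that $\sup_{z}|G_{j,T}(z) - G(z)| < \e$ holds for \emph{all} $j \ge J$ and \emph{all} $T \ge T_0$ simultaneously. Your argument fixes a single $T$, then selects $J = J(T)$ via Lemma~\ref{smoothapprox}. That controls the case where $T$ equals your chosen value and $j \ge J$, but once you replace $T$ by a larger $T'$, the bulk integral now runs over $[0,T']$, and Lemma~\ref{smoothapprox} (whose threshold $J$ depends on $T$) no longer guarantees the bound for $j \ge J(T)$. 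Concretely, you have not controlled $\int_{T}^{T'}\int_{\overline\U}p(t,z,\zeta)\bigl[g_j(y_1,\tilde\dd,\zeta)-g_j(y_2,\tilde\dd,\zeta)\bigr]\,d\xi\,d\eta\,dt$, which (up to the normalizing factor) is the difference $G_{j,T'}(z)-G_{j,T}(z)$.

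The paper fills exactly this gap. Using the cancellation $\int_{\U}\bigl[g_j(y_1,\tilde\dd,\zeta)-g_j(y_2,\tilde\dd,\zeta)\bigr]\,d\xi\,d\eta=0$ together with Lemma~\ref{bh} (via \eqref{exptail}), it establishes a tail bound \eqref{tailint2} for the \emph{smoothed} integrand, $O(e^{-cT})$ uniformly over $j$, $z$, and $T'>T$. With that in hand the decomposition has three pieces: bulk on $[0,T]$ (Lemma~\ref{smoothapprox}), indicator tail on $[T,\infty)$ (\eqref{tailint1}), and smoothed tail on $[T,T']$ (\eqref{tailint2}); all three are controlled uniformly for $j\ge J(T)$ and $T'\ge T$, which is what the double limit requires. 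Your proof becomes complete once you add this smoothed-tail estimate --- it follows the same exponential-mixing pattern as your indicator-tail bound, once you observe that the $g_j$ masses cancel exactly as the areas of $\U_1$ and $\U_2$ do.
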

\begin{proof} 
Clearly by definition $$\int_{\U}[g_j(y_1,\tilde \dd,\zeta)-g_j(y_2,\tilde \dd,\zeta)]d\xi d\eta=0.$$
Hence for every $j$ and all intervals $[T,T']$ where $T$ is large enough and $T<T'$ by \eqref{exptail}
\begin{equation}\label{tailint2}
\int_{T}^{T'}dt\left|\int_{\overline \U}p(t,z,x)[g_j(y_1,\tilde \dd,x)-g_j(y_2,\tilde \dd,x)]dx\right|=O( e^{-cT}).
\end{equation}
\noindent
Also note that by Lemma \ref{bh} for large enough $T<T',$
\begin{equation}\label{tailint1}
\int_{T}^{T'}|p(t,z,\U_1)-p(t,z,\U_2)|dt=O(e^{-cT}).
\end{equation}
\noindent
Hence given $\e>0,$ choose $T$ such that both the above integrals in  \eqref{tailint2} and 
\eqref{tailint1} are less than $\e$ for all $T'>T$ and $z\in \overline \U$.
 The lemma now follows by Lemma \ref{smoothapprox}.
\end{proof}

\noindent
Since $g_j$'s are compactly supported $C^{\infty}$ functions, for any $j$  and $z$ using Theorem \ref{continuity1} $b.,c.,$ and fundamental theorem of calculus we have,
\begin{equation}\label{timederiv}
\frac{\partial}{\partial t} G_{j,t}(z)=\frac{2}{\rm{area} (\U_1)}\int_{\overline \U}p(t,z,x)[g_j(y_1,\tilde \dd,x)-g_j(y_2,\tilde \dd,x)]dx.
\end{equation}
Also by Theorem \ref{heatequation} $c. ii, iii.$ for any $t>0$ 
\begin{eqnarray}\label{derivelapl}
\nonumber
\Delta G_{j,t}(z)&=&\frac{4}{\rm{area}(\U_1)}[g_j(y_2,\tilde \dd,z)-g_j(y_1,\tilde \dd,z)]+\frac{4}{\rm{area}(\U_1)}\int_{\overline \U}p(t,z,x)[g_j(y_1,\tilde \dd,x)-g_j(y_2,\tilde \dd,x)]dx\\
\frac{\partial}{\partial \nu}G_{j,t}(z) &=& 0 \text{ for all } z\in \partial \U.
\end{eqnarray}

\subsection{Closed form of the limit.}\label{sub:cfl}
In this subsection we show that the function $G$ is same as the function $G_*$ \eqref{transformed}.
\begin{thm}\label{closedform1} Let $\U$ be as in Setup \ref{para}.  For all $z \in \overline \U,$
\begin{equation}\label{cf2}
G(z)=G_*(z).
\end{equation}
\end{thm}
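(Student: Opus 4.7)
The plan is to show that $G$ and $G_*$ solve the same Neumann–Poisson boundary-value problem on $\overline\U$ with identical normalization, and then to invoke uniqueness of the Neumann problem.

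I would first derive the PDE satisfied by $G$ from its heat-kernel definition. Starting from the smooth approximants $G_{j,T}$ in (\ref{smoothgreen}), I would send $T\to\infty$: by Lemma \ref{bh} together with $\rm{area}(\U_1)=\rm{area}(\U_2)$, the integrand in (\ref{timederiv}) decays exponentially in $t$, so $\frac{\partial}{\partial t}G_{j,t}(z)\to 0$. Combined with (\ref{derivelapl}), this yields $\Delta(\lim_T G_{j,T})(z)=\frac{4}{\rm{area}(\U_1)}(g_j(y_2,\tilde\dd,z)-g_j(y_1,\tilde\dd,z))$ together with $\frac{\partial}{\partial\nu}(\lim_T G_{j,T})|_{\partial\U}=0$. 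Passing $j\to\infty$ via Lemma \ref{approxfinite} and the $L^1$ convergence (\ref{l1conv}) then identifies the distributional limit as
\[
\Delta G=\frac{4}{\rm{area}(\U_1)}\bigl(\mathbf{1}(\U_2)-\mathbf{1}(\U_1)\bigr),\qquad \tfrac{\partial}{\partial\nu} G\big|_{\partial\U}=0.
\]

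Next I would verify that $G_*$ satisfies the same problem. Set $\tilde G_*:=G_*\circ\phi$ on $\overline\DD$. Since $\Delta_y\log|\zeta-y|^2=4\pi\delta_\zeta(y)$ while $\log|1-\bar\zeta y|^2$ is harmonic in $y$ for $|\zeta|<1$, differentiating (\ref{transformed}) under the integral gives $\Delta_y\tilde G_*(y)=4\,\tilde f(\phi(y))\,|\phi'(y)|^2$; the conformal identity $\Delta(F\circ\phi)=|\phi'|^2(\Delta F)\circ\phi$ then shows that $\Delta G_*$ is proportional to $\tilde f$, matching $\Delta G$ up to the constant built into (\ref{laplace}). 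For the Neumann condition, a short polar computation shows that $\frac{\partial}{\partial r}\log|(\zeta-y)(1-\bar\zeta y)|^2\big|_{|y|=1}=2$, a constant independent of $\zeta\in\DD$, so
\[
\tfrac{\partial}{\partial r}\tilde G_*\big|_{|y|=1}=\frac{2}{\pi}\int_\DD\tilde f(\phi(\zeta))|\phi'(\zeta)|^2\,d\xi\,d\eta=\frac{2}{\pi}\int_\U\tilde f(w)\,dw=0
\]
because the two blob areas are equal. Since $\phi$ sends outward normals on $\partial\DD$ to positive multiples of outward normals on $\partial\U$, this transfers to $\frac{\partial}{\partial\nu} G_*|_{\partial\U}=0$.

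Finally I would match the normalizations and invoke uniqueness. For $|\zeta|=1$ one has $|1-\bar w\zeta|=|w-\zeta|$, so Fubini combined with the mean-value identity $\int_0^{2\pi}\log|w-e^{i\theta}|\,d\theta=0$ for $|w|<1$ yields $\int_{|\zeta|=1}G_*\circ\phi(\zeta)\,\frac{d\zeta}{\zeta}=0$, matching the centering (\ref{specificconst}) for $G$. Because the Poisson source has zero mean, the Neumann problem is well-posed and its solution is unique up to an additive constant; the matching normalization then forces $G=G_*$ on $\overline\U$. The main obstacle is that the source $\mathbf{1}(\U_2)-\mathbf{1}(\U_1)$ is merely $L^\infty$, so $\Delta G$ must be interpreted distributionally; handling this cleanly requires working throughout with the smooth approximants $G_{j,T}$ and carefully exchanging limits, using Lemmas \ref{approxfinite} and \ref{bh} and Theorem \ref{properties} to control both the large-$t$ tails and the mollification as $j\to\infty$.
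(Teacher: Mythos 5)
Your overall strategy — identify $G$ and $G_*$ as solutions of the same Neumann--Poisson problem and invoke uniqueness — is the same as the paper's, which uses Lemma \ref{pde1} (a uniqueness-plus-representation result for the Neumann problem on the disc). But the route you take through that strategy is genuinely different from the paper's and, as sketched, leaves the crucial step unfinished. The paper never verifies a PDE for $G_*$ at all. Instead it applies the representation formula of Lemma \ref{pde2} to the \emph{smooth} approximants $G_{j,T}$ (for which the hypotheses of Lemma \ref{pde1} hold classically by Theorem \ref{properties}), obtaining the explicit integral \eqref{candidate1} with source $f_{j,T}$, and then shows that this integral converges uniformly to the integral defining $G_*$ in \eqref{transformed} as $j,T\to\infty$, using $f_{j,T}\to\tilde f$, the bi-Lipschitz bounds on $\phi$, and the local integrability of the logarithmic kernel. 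In this way the paper avoids ever interpreting $\Delta G$ or $\Delta G_*$ distributionally, and the only limiting arguments are convergences of explicit integrals. Your proposal instead differentiates \eqref{transformed} under the integral and works with a distributional Laplacian; you flag the resulting $L^\infty$-source regularity issue, but you leave it at ``work with the approximants,'' and if you actually do that you end up reproducing the paper's argument, so the direct-differentiation route does not buy a genuine shortcut.

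The more concrete gap is the constant-matching, which you assert rather than verify. You compute $\Delta_y(G_*\circ\phi)(y)=4\,\tilde f(\phi(y))\,|\phi'(y)|^2$, hence $\Delta G_*=4\tilde f$ in $\U$. On the other hand, from \eqref{derivelapl} (after sending $T\to\infty$ and then $j\to\infty$, using Lemma \ref{approx1}) one gets $\Delta G=\frac{4}{\mathrm{area}(\U_1)}\bigl(\mathbf{1}(\U_2)-\mathbf{1}(\U_1)\bigr)$, which by \eqref{laplace} equals $\tilde f/4$, not $4\tilde f$. Saying these ``match up to the constant built into (\ref{laplace})'' is exactly where the proof has to be closed, because an unresolved multiplicative constant in the source would, after invoking uniqueness, give $G_*= c\,G$ for some $c\ne1$. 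The paper sidesteps this delicate bookkeeping by feeding $\Delta G_{j,T}=\tfrac{1}{4}f_{j,T}$ directly into Lemma \ref{pde2}, so that the constant conventions of Lemma \ref{pde1} enter only once and on both sides simultaneously. If you want to pursue your direct route, you must reconcile your computed $\Delta G_*$ with $\Delta G$ under the same normalization conventions as Lemma \ref{pde1}; as written the factor does not come out, and the argument does not go through.
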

\noindent
To show this,  we identify $G$ as a  solution to a second order differential equation also satisfied by $G_*$. The result then follows by uniqueness of such a solution. We first quote a result in the theory of boundary value problems with Neumann boundary condition.
\begin{lem}\cite[Theorem 8]{bca}\label{pde1} A function $w\in C^{2}(\DD)\cap C^{1}(\overline{\DD})$ on the disc satisfying the following properties
\begin{eqnarray*}
\Delta(w) &=& \frac{f}{4}\\
\frac{\partial w}{\partial \nu}&=& \gamma \text{ on } \partial \DD\\
\frac{1}{2\pi i}\int_{\partial \DD}\frac{w(\zeta)}{\zeta}d\zeta &=& d
\end{eqnarray*} where $f \in L^{1}(\mathbb{R},\DD),$ $\gamma\in C(\mathbb{R},\partial \DD) $ and $d \in \mathbb{R}$ exists iff 
$$
\frac{1}{2\pi i}\int_{|\zeta|=1}\gamma(\zeta)\frac{d\zeta}{\zeta}=\frac{2}{\pi}\int_{|\zeta|<1}f(\zeta)d\xi d\eta.
$$
The unique solution in that case is given by the following 
$$w(z)= d-\frac{1}{2\pi i}\int_{|\zeta|=1}\gamma(\zeta)\log(|\zeta-z|^2)\frac{d\zeta}{\zeta}+\frac{1}{\pi}\int_{|\zeta|<1}f(\zeta)\log(|(\zeta-z)(1-\bar{\zeta}z)|^2)d\xi d\eta ,$$
where $\zeta=\xi+i\eta.$
\end{lem}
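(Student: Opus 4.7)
The plan is to decompose the lemma into three parts: necessity of the compatibility condition, uniqueness of the solution under the normalization, and existence via direct verification of the displayed integral formula.

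For necessity, I would apply Green's first identity $\int_{\DD}\Delta w\,dA = \int_{\partial\DD}\partial_\nu w\,ds$, substitute the hypotheses $\Delta w = f/4$ and $\partial_\nu w = \gamma$, and convert $ds$ to $d\zeta/(i\zeta)$ on $|\zeta|=1$; modulo the coefficient convention employed in \cite{bca}, this reproduces the stated integrated identity linking $\gamma$ and $f$. For uniqueness, if $w_1, w_2$ are two solutions, then $u := w_1 - w_2$ is harmonic on $\DD$, belongs to $C^1(\overline\DD)$, has vanishing normal derivative on $\partial\DD$, and satisfies $\frac{1}{2\pi i}\int_{\partial\DD}u(\zeta)\,d\zeta/\zeta = 0$. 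Applying Green's first identity to $u$ against itself gives $\int_\DD|\nabla u|^2\,dA = \int_{\partial\DD}u\,\partial_\nu u\,ds = 0$, forcing $u$ to be constant; that constant equals the mean-value integral in the normalization, which vanishes, so $u\equiv 0$.

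For existence I would verify the three defining properties directly from the explicit formula. The key structural observation is that
\[ K(z,\zeta) := \log|(\zeta-z)(1-\bar\zeta z)|^2 = 2\log|\zeta-z| + 2\log|1-\bar\zeta z| \]
is, up to normalization, the Neumann Green's function on $\DD$ obtained by the method of images: the fundamental solution $2\log|\zeta-z|$ together with its reflection $2\log|1-\bar\zeta z|$ through the unit circle, the latter being harmonic in $z$ on $\DD$ for any $\zeta\in\DD$. Since $\Delta_z \log|\zeta-z| = 2\pi\,\delta(z-\zeta)$, differentiating under the integral sign produces $\Delta_z$ of the volume term equal to the correct multiple of $f(z)$; meanwhile the single-layer term $-\frac{1}{2\pi i}\oint \gamma\log|\zeta-z|^2\,d\zeta/\zeta$ is harmonic in $z\in\DD$ because $\zeta\in\partial\DD$ stays away from $z$. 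For the boundary condition I would use two ingredients: on $|z|=1$ the symmetry $|1-\bar\zeta z|=|\zeta-z|$ combined with a direct computation of $\partial_r \log|(\zeta-z)(1-\bar\zeta z)|^2$ at $r=1$ collapses to the constant $2$, so the volume integral contributes a constant normal derivative equal to $\frac{2}{\pi}\int f\,d\xi d\eta$; second, the classical jump relation for the single-layer potential contributes precisely the function $\gamma$ at the boundary, and the compatibility condition absorbs the residual constant so that $\partial_\nu w = \gamma$ pointwise. Finally, the normalization $\frac{1}{2\pi i}\oint w/\zeta\,d\zeta = d$ is extracted by expanding $\log|\zeta-z|^2$ as a Fourier/Laurent series on $|\zeta|=1$: all non-constant harmonic modes integrate to zero against $d\zeta/\zeta$, and the remaining constant pieces cancel by the compatibility condition, leaving $d$.

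The main obstacle will be the boundary verification step: combining the jump relation for the single-layer potential with the constant normal-derivative contribution from the volume term, and tracking the various numerical factors (between $ds$, $d\theta$, and $d\zeta/(i\zeta)$, and between the real and complex forms of the Laplacian) so that the coefficients in the lemma match exactly. This is classical potential theory, but the bookkeeping of constants is delicate; the remaining ingredients (Green's identity for necessity, the energy uniqueness argument, and the fundamental-solution identity for $\log|\zeta-z|$) are routine.
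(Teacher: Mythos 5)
The paper does not prove Lemma \ref{pde1}: it is quoted with a citation from \cite[Theorem~8]{bca} and invoked as a black box, so there is no internal proof to compare your attempt against. That said, your sketch is the standard classical route to this result---Green's first identity for the solvability constraint, a Dirichlet-energy argument for uniqueness under the mean-value normalization, and direct verification of the explicit formula via the reflected (method-of-images) Neumann function for the disc---and the structural observations you pivot on are correct. In particular, for fixed $\zeta \in \DD$ and $|z|=1$ the identity $|1-\bar\zeta z| = |\zeta - z|$ does make $\partial_\nu \log|(\zeta-z)(1-\bar\zeta z)|^2 \equiv 2$ on $\partial\DD$, so the area integral contributes the constant normal derivative $\frac{2}{\pi}\int f$, while the single-layer term contributes $\gamma - \frac{1}{2\pi}\int_{\partial\DD}\gamma\,ds$; these sum to $\gamma$ precisely under the stated compatibility condition, which is the crux. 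Two refinements worth recording. First, for the normalization step you do not in fact need the compatibility condition: both the boundary-integral and the area-integral terms have vanishing average against $\frac{d\zeta}{2\pi i\zeta}$ individually, since Jensen's formula $\frac{1}{2\pi}\int_0^{2\pi}\log|\zeta - e^{i\alpha}|\,d\alpha = 0$ for $|\zeta|\le 1$ kills each, so the mean is $d$ unconditionally. Second---and this is exactly the constant-tracking risk you flag---if you carry the Laplacian computation through with the paper's convention $\Delta = \partial_x^2+\partial_y^2$, you will find $\Delta_z$ of the displayed volume term is $4f$, not $f/4$, and the compatibility condition and formula are internally consistent only with $\Delta w = 4f$ (equivalently $w_{z\bar z} = f$, Begehr's convention); the line $\Delta(w)=f/4$ in the statement should be read as $\tfrac14\Delta w = f$. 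This is a transcription artifact in the statement rather than a gap in your plan, but you would hit it the moment you tried to match constants, so it is worth knowing in advance.
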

\nin
Recall the maps $\phi$ and $\psi$ from \eqref{confmap1}. They will be used throughout the rest of the subsection. The next simple lemma shows how the laplacian changes under a change of variable.

\begin{lem}\label{pushforward} For a function $u\in C^{2}(\U)\cap C^{1}(\overline{\U})$ with the following properties
\begin{align*}
\Delta(u) = f \text{ on } \U\\
\frac{\partial u}{\partial \nu} =  0\, \text{ on } \partial \U
\end{align*} 
the function $v=u\circ \phi$ on $\overline \DD$ satisfies the following properties,\\\\
$i.$ $v\in C^{2}(\DD)\cap C^{1}(\overline{\DD})$.\\\\
$ii.$ $\Delta(v) = (f\circ \phi )|\phi'|^2$.\\\\
$iii.$ $\frac{\partial v}{\partial \nu} =  0 \text{ on } \partial \DD$.
\end{lem}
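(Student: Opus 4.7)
The plan is to verify the three assertions in order, relying on the fact that $\phi$ extends conformally, hence $C^2$-smoothly, across $\partial\DD$ by the smoothness of $\U$ (Remark \ref{bili1}, via Schwarz reflection).

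For (i), since $\phi \in C^2(\overline{\DD})$ and $u \in C^2(\U) \cap C^1(\overline{\U})$, the chain rule immediately gives $v = u \circ \phi \in C^2(\DD) \cap C^1(\overline{\DD})$; the interior regularity $v \in C^2(\DD)$ uses that $\phi(\DD) \subset \U$.

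For (ii), the goal is to establish the standard conformal invariance identity $\Delta(u \circ \phi) = (\Delta u \circ \phi)\,|\phi'|^2$. Write $\phi(\xi+i\eta) = p(\xi,\eta) + i\, q(\xi,\eta)$, where $p, q$ are harmonic conjugates satisfying the Cauchy--Riemann equations $p_\xi = q_\eta$, $p_\eta = -q_\xi$. Differentiating $v(\xi,\eta) = u(p,q)$ twice and summing, one collects
\begin{align*}
\Delta v &= u_{xx}(p_\xi^2 + p_\eta^2) + u_{yy}(q_\xi^2 + q_\eta^2) + 2u_{xy}(p_\xi q_\xi + p_\eta q_\eta) + u_x \Delta p + u_y \Delta q.
\end{align*}
The Cauchy--Riemann equations give $p_\xi^2 + p_\eta^2 = q_\xi^2 + q_\eta^2 = |\phi'|^2$ and $p_\xi q_\xi + p_\eta q_\eta = 0$, while harmonicity of $p,q$ kills the remaining first-order terms. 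Hence $\Delta v = (\Delta u \circ \phi)\,|\phi'|^2 = (f \circ \phi)\,|\phi'|^2$.

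For (iii), the claim is that $\phi$ carries normal directions to normal directions. Fix $\zeta \in \partial\DD$ with unit tangent $\tau$ and outward unit normal $\nu_\DD = i\tau$. Since $\phi$ extends conformally across $\partial\DD$ and sends $\partial\DD$ onto $\partial\U$, the vector $\phi'(\zeta)\tau$ is tangent to $\partial\U$ at $\phi(\zeta)$; multiplying by $i$, the vector $\phi'(\zeta)\nu_\DD$ is normal to $\partial\U$ there, with magnitude $|\phi'(\zeta)|$. Thus, writing $D\phi$ for the Jacobian acting as multiplication by $\phi'(\zeta)$,
\begin{equation*}
\frac{\partial v}{\partial \nu_\DD}(\zeta) = \nabla u(\phi(\zeta)) \cdot D\phi(\nu_\DD) = |\phi'(\zeta)|\,\frac{\partial u}{\partial \nu_\U}(\phi(\zeta)) = 0,
\end{equation*}
using the hypothesis on $u$ and the fact that $|\phi'|$ is bounded away from $0$ on $\overline{\DD}$.

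No step presents a serious obstacle; the only mildly delicate point is ensuring the $C^1$ regularity up to $\partial\DD$ and the correct interpretation of normal derivatives at the boundary, both of which are handled cleanly once we invoke the conformal extension of $\phi$ across $\partial\DD$ guaranteed by the smoothness of $\partial\U$.
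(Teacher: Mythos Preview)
Your proof is correct and follows essentially the same approach as the paper: part (i) by smoothness of the conformal extension, part (ii) by the chain rule combined with the Cauchy--Riemann equations, and part (iii) via the identity $\frac{\partial v}{\partial \nu}=|\phi'|\,\frac{\partial u}{\partial \nu}$ coming from the angle-preserving property of conformal maps. You have simply written out in detail what the paper summarizes in a line or two.
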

\begin{proof}
$i.$ is obvious since it is the composition with a conformal map which is analytic across the boundary by hypothesis.\\
$ii.$ follows from chain rule for differentiation and using the Cauchy-Riemann equations.  \\
$iii.$ Since conformal maps preserve angles it follows that
$$\frac{\partial v}{\partial \nu}=\frac{\partial u}{\partial \nu}|\phi'(z)|.$$ 
Now by hypothesis
$$\frac{\partial u}{\partial \nu}\mid_{\partial \U}=0$$ and hence we are done. 
\end{proof}

\nin
Using the above observation we state a lemma analogous to Lemma \ref{pde1} for $\U$.
\begin{lem}\label{pde2} Let function $w\in C^{2}(\U)\cap C^{1}(\overline{\U})$ be a function on $\U$ satisfying the following properties
\begin{eqnarray*}
\Delta(w)&=&\frac{f}{4}\\
\frac{\partial w}{\partial \nu}& = & 0 \text{ on } \partial \U\\
\end{eqnarray*} where $f \in L^{1}(\mathbb{R},\U)$ then  
\begin{equation}\label{rep1}
w\circ\phi(z)= d+\frac{1}{\pi}\int_{|\zeta|<1}f\circ\phi(\zeta)|\phi'|^2(\zeta)\log(|(\zeta-z)(1-\bar{\zeta}z)|^2)d\xi d\eta,
\end{equation}
where $$\frac{1}{2\pi i}\int_{\partial \DD}\frac{w \circ \phi(\zeta)}{\zeta}d\zeta = d.
$$
\end{lem}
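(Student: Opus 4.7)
The plan is to pull the PDE back to the disc via the conformal map $\phi$ and apply Lemma \ref{pde1}. First, set $v := w \circ \phi$. By Lemma \ref{pushforward}, $v \in C^2(\DD) \cap C^1(\overline{\DD})$, satisfies $\Delta v = (f \circ \phi)|\phi'|^2 / 4$ on $\DD$, and $\partial v/\partial \nu = 0$ on $\partial \DD$. Thus $v$ is a solution to the Neumann boundary value problem on the disc with right-hand side $\tilde f := (f \circ \phi)|\phi'|^2$, boundary data $\gamma \equiv 0$, and third condition fixed by $d := \frac{1}{2\pi i}\int_{\partial \DD} v(\zeta) \frac{d\zeta}{\zeta}$.

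Next I would verify the compatibility condition of Lemma \ref{pde1}, namely
\begin{equation*}
\frac{1}{2\pi i}\int_{|\zeta|=1}\gamma(\zeta)\frac{d\zeta}{\zeta} \;=\; \frac{2}{\pi}\int_{|\zeta|<1}\tilde f(\zeta)\, d\xi\, d\eta.
\end{equation*}
Since $\gamma = 0$, the left side vanishes, so I need $\int_{|\zeta|<1}\tilde f \, d\xi\, d\eta = 0$. But the existence of $w$ (which is granted in the hypothesis) forces this: by the divergence theorem applied to $w$ on $\U$,
\begin{equation*}
\int_\U f(z)\, dA(z) \;=\; 4\int_\U \Delta w \, dA \;=\; 4\int_{\partial \U}\frac{\partial w}{\partial \nu}\, d\sigma \;=\; 0,
\end{equation*}
and by the standard change-of-variable formula $\int_{|\zeta|<1}\tilde f(\zeta)\,d\xi\,d\eta = \int_{\U}f(z)\,dA(z) = 0$. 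So the compatibility condition holds for free.

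Having checked compatibility, Lemma \ref{pde1} applies to $v$ on $\DD$ and gives the unique representation
\begin{equation*}
v(z) \;=\; d - \frac{1}{2\pi i}\int_{|\zeta|=1} 0 \cdot \log(|\zeta - z|^2)\,\frac{d\zeta}{\zeta} \;+\; \frac{1}{\pi}\int_{|\zeta|<1}\tilde f(\zeta)\,\log(|(\zeta-z)(1-\bar{\zeta}z)|^2)\,d\xi\, d\eta.
\end{equation*}
Dropping the vanishing boundary integral and substituting $\tilde f = (f\circ\phi)|\phi'|^2$ together with $v = w \circ \phi$ yields exactly \eqref{rep1}. The main (minor) obstacle is really just the bookkeeping in verifying compatibility and confirming that the $\gamma$-boundary integral drops out cleanly; the conceptual content is entirely contained in Lemmas \ref{pushforward} and \ref{pde1}.
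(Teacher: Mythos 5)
Your proof is correct and follows exactly the route the paper indicates: pull back via $\phi$ using Lemma \ref{pushforward}, then apply Lemma \ref{pde1} on the disc. The paper states the proof in one line, and your filled-in details (in particular, verifying the compatibility condition $\int_\U f = 0$ via the divergence theorem together with the Neumann condition, and the change-of-variable identity $\int_{\DD}(f\circ\phi)|\phi'|^2 = \int_\U f$) are exactly the right bookkeeping.
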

\begin{proof} The proof  follows from Lemmas \ref{pde1} and \ref{pushforward}.  
\end{proof}

\nin
We are now ready to prove Theorem \ref{closedform1}.\\

\noindent
\textbf{Proof of Theorem \ref{closedform1}.} For any $j$ and $T$ recall $G_{j,T}$ from \eqref{smoothgreen}.
By \eqref{derivelapl},  $G_{j,T}$ satisfies the hypotheses of Lemma \ref{pde2} with $\Delta G_{j,T}=\frac{f_{j,T}}{4},$ where 
\begin{equation}\label{approxlap}
f_{j,T}(\cdot)  = \frac{16}{\rm{area}(\U_1)}\left[g_j(y_2,\tilde \dd,\cdot)-g_j(y_1,\tilde \dd,\cdot)+\int_{\U}p(T,\cdot,\zeta)[g_j(y_1,\tilde \dd,\zeta)-g_j(y_2,\tilde \dd,\zeta)]d\xi d\eta \right],
\end{equation}
where $\zeta=\xi+ i \eta.$
Thus by Lemma \ref{pde2} for all $z\in \overline \DD,$
\begin{equation}\label{candidate1}
G_{j,T}\circ \phi(z)=c_{j,T}-c+\frac{1}{\pi }\int_{|\zeta|<1}|\phi'|^2\left(f_{j,T}\circ \phi\right) (\zeta)\log(|(\zeta-z)(1-\bar{\zeta}z)|^2)\,d\xi d\eta,
\end{equation}
where 
\begin{equation}\label{express1}
c_{j,T}=\frac{1}{2\pi i}\int_{|\zeta|=1}(G_{j,T}\circ \phi+c)\frac{d \zeta}{\zeta}.
\end{equation}
Recall from \eqref{laplace},
\begin{equation*}
\tilde f=\frac{16}{\rm{ area}(\U_{1})}\bigl(\mathbf{1}(\U_2)-\mathbf{1}(\U_1)\bigr).
\end{equation*}
and that $\psi(\U_{i})=A_i$. 
Then 
\begin{equation*}
\tilde f\circ\phi=\frac{16}{\rm{ area}(\U_{1})}\bigl(\mathbf{1}(A_2)-\mathbf{1}(A_1)\bigr).
\end{equation*}

\noindent
We first prove that 
\begin{equation}\label{suffice1}
\lim_{j,T \rightarrow \infty}\sup_{z\in \overline \DD}|(G_{j,T}\circ \phi(z)-c_{j,T}+c)-G_*\circ \phi(z)|=0.
\end{equation}
Using the expression in \eqref{candidate1} and \eqref{transformed} we get that given $\e>0$, for large enough $j,T$, for all $z\in \overline{\DD}$ 
\begin{align*}
|G_{j,T}\circ \phi(z)-c_{j,T}+c-G_{*}\circ \phi(z)| \le &\frac{32}{\rm{area}(\U_1)}\left[ \int_{\zeta\in A_{1}\setminus  \psi(B(y_1,\tilde \dd (1-\frac{1}{j})))}|\phi'|^2\bigl|\log(|\zeta-z||1-\bar{\zeta}z|)\bigr|d\xi d\eta \right. \\ & + \int_{\zeta\in A_{2}\setminus \psi( B(y_2,\tilde \dd (1-\frac{1}{j}))) }|\phi'|^2\bigl|\log(|\zeta-z||1-\bar{\zeta}z|)\bigr|\,d\xi d\eta  \\ & + \left. \int_{\zeta\in \DD }\e |\phi'|^2\bigl|\log(|\zeta-z||1-\bar{\zeta}z|)\bigr|\,d\xi d\eta \right]. 
\end{align*}
\nin
The terms on the RHS corresponds to difference between the functions $f_{j,T}\circ \phi$ and $\tilde f\circ \phi $. The first term corresponds to difference between $g_j (y_1,\tilde \dd,\phi(\cdot))$ and ${1}(A_1).$ Similarly the second term corresponds to $g_j(y_2,\tilde \dd,\phi(\cdot))$ and ${1}(A_2).$
\nin
The last term corresponds to $\int_{\U}p(T,\cdot,\zeta)[g_j(y_1,\tilde \dd,\zeta)-g_j(y_2,\tilde \dd,\zeta)]d\xi d\eta$. By Lemma \ref{approx1} for any $\e$, for large enough $T$ and $j$,$$\sup_{z\in \overline \U}\left|\int_{\U}p(T,z,\zeta)[g_j(y_1,\tilde \dd,x)-g_j(y_2,\tilde \dd,\zeta )]d\xi d \eta\right|\le \e.$$
Now in all the above terms we can ignore $|\phi'|^2$ since it is bounded by Remark \ref{bili1}. As $\log(|\zeta-z||1-\bar{\zeta}z|)$ is a uniformly locally integrable function in $z$ we see that by making $\e$ small and $j$ large  we can make the above quantity arbitrarily small uniformly over $z\in \overline \DD$.
Thus $$\lim_{j, T\rightarrow \infty}\bigl[\sup_{z\in \overline \DD}|G_{j,T}(z)-c_{j,T}+c-G_{*}(z)|\bigr]=0.$$
Now notice by Lemma \ref{approxfinite} 
$$\lim_{j,T \rightarrow \infty} \frac{1}{2\pi i}\int_{|\zeta=1|}(G_{j,T}\circ \phi)\frac{d \zeta}{\zeta}= \frac{1}{2\pi i}\int_{|\zeta|=1}(G\circ \phi)\frac{d \zeta}{\zeta}.$$
By the choice of $c$ in \eqref{specificconst} 
$$\frac{1}{2\pi i}\int_{|\zeta|=1}(G\circ \phi)\frac{d \zeta}{\zeta}=0.$$
Hence by \eqref{express1}
$$\lim_{j, T\rightarrow \infty}c_{j,T}=c.$$
This along with \eqref{suffice1} completes the proof.

\qed

\begin{remark}\nonumber Recall the constant $c$ in \eqref{discdef} such that.
\begin{equation*} \int_{|\zeta|=1} G\circ \phi (\zeta)\frac{d\zeta}{\zeta}=0.
\end{equation*}
Consider the the special case of the unit disc with point $x_1=-i,x_2=i$ and $y_1=-(1-\dd)i,\,y_2=(1-\dd)i.$ Now owing to the invariance of Reflected Brownian motion on $\DD$ under the transformation
$\zeta \rightarrow -\zeta,$ one sees that the integral in \eqref{discdef}
$$\frac{2}{\rm{ area}(\DD_{1})}\int_{0}^{\infty}[P_{z}(B_{t} \in \DD_1)-P_{z}(B_{t} \in \DD_2)]\,dt$$  is an odd function, (since by the symmetric choice of $y_1,y_2,$ $\DD_1=-\DD_2$).
Thus $c$ is $0$ and 
 \begin{equation*} 
G(z)=\frac{2}{\rm{ area}(\DD_{1})}\int_{0}^{\infty}[P_{z}(B_{t} \in \DD_1)-P_{z}(B_{t} \in \DD_2)]\,dt.
 \end{equation*}
\end{remark}

\subsection{Convergence}\label{conv1}
In this section we prove the last technical piece needed to complete the proof of Theorem \ref{convergence}. Namely we show that the function $G_n$ converges to $G.$ This along with Theorem \ref{closedform1} then would clearly complete the proof of Theorem \ref{convergence}.\\
\noindent
As mentioned in the statement of Theorem \ref{convergence} the function $G_n$ is interpolated from the graph $\U_n$ to  $\overline \U.$ 
We define precisely the interpolation method which is sketched right before the statement of the theorem. We follow the scheme mentioned in \cite[Pf of Theorem 2.12]{wtf} (also appears in \cite[Pf of Theorem 2.2.8]{thes}).\\
\noindent
For $x,y \in \U_n$, define,
\begin{equation}\label{normkernel}
p_n(t,x,y)=\frac{\mathbb{P}_{x}(X(t)=y)}{m_n(y)}
\end{equation}
where $\mathbb{P}_{x}(X(t)=y)$ is the probability that the continuous time random walk $X(t)$  started from $x$ is at $y$ at time $t$ (see \eqref{rwhk}). $m_n(y)=\frac{d_y}{4n^2}$ is the symmetrizing measure of the random walk ($d_y$ is the degree of the vertex $y$).\\
\noindent
For all pairs $x,y\in \frac{1}{n}\mathbb{Z}^2,$ such that at least one of them is not in $\U_n$ define for all $t,$ $p_n(t,x,y)=0.$
Now having defined $p_n(t,x,y)$ for all $t\in \mathbb{R}_+, x,y\in \frac{1}{n}\mathbb{Z}^2$ we extend it to $\mathbb{R}_+\times \overline \U \times \overline \U.$ 
To this end we interpolate $p_n(t,x,y)$ by a sequence of harmonic extensions along simplices, \\

\noindent
$i.$ First harmonically extend it along the edges of $\frac{1}{n}\Z^2$ using the value on the vertices.\\
$ii.$ Then harmonically extend it to the squares (faces of $\frac{1}{n}\Z^2$) using the value on the edges.\\
 Thus we  have extended $p_n(t,x,y)$ to $\mathbb{R}_+\times \C \times \C,$ and hence in particular to $\mathbb{R}_+\times \overline \U \times \overline \U.$\\

\noindent
Note that by \eqref{normkernel} for $x\in \U_n,$ the expression of $G_{n}(x)$ in \eqref{dgf1} is the same as 
 \begin{equation}\label{abbre1234}
\frac{2n^2}{|\U_{1,n}|}\int_{0}^{\infty}\left[\sum_{y\in \U_{1,n}}\frac{p_{n}(t,x,y)}{n^2}-\sum_{y\in \U_{2,n}}\frac{p_{n}(t,x,y)}{n^2}\right]dt -c.
\end{equation}
  $p_{n}(t,x,y)$ is divided by $n^2$ in the above sums since all $y\in \U_{1,n}\bigcup \U_{2,n}$ have degree $4$.\\
\nin
Now by the above extension $p_{n}(t,x,y)$ is extended to all $x\in \overline \U,y \in \U_{1,n}\bigcup \U_{2,n}.$ The above expression thus allows us to define the extended function $G_{n}(x)$ on $\overline\U$. It is easy to see that since the integral in \eqref{abbre1234} is finite for all $x\in \U_n$, the interpolation does not cause additional issues.\\

\noindent 
For notational unification we formally denote the two sums inside the integral in \eqref{abbre1234} as $\mathbb{P}_x(X(t)\in \U_{i,n})$ for $i=1,2$ respectively as in \eqref{dgf1} even though now the function lives on $\overline \U$ and hence $x$ might not be in $\U_n$.\\
 
\noindent 
\nin 
We now state the following lemma which along with Theorem \ref{closedform1} proves Theorem \ref{convergence}.
\begin{lem} \label{convergence1}
$$\lim_{{m\rightarrow \infty} \atop {n=2^m}}\sup_{x\in \overline{\U}}|G_n(x)-G(x)|=0$$
where $G$ is defined in \eqref{discdef}.
\end{lem}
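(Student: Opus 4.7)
The plan is to partition the time integral $\int_0^\infty [I_n(t,x) - I(t,x)]\,dt$ into three regimes and bound each separately, where
\[
I_n(t,x) := \frac{2n^2}{|\U_{1,n}|}\bigl[\PP_x(X(t) \in \U_{1,n}) - \PP_x(X(t) \in \U_{2,n})\bigr], \qquad I(t,x) := \frac{2}{\rm{area}(\U_1)}\bigl[p(t,x,\U_1) - p(t,x,\U_2)\bigr].
\]
Then $G_n(x) - G(x) = \int_0^\infty [I_n(t,x) - I(t,x)]\,dt$, since the centering constants $c$ are the same on both sides and cancel. Given $\e>0$, the aim is to choose $0<\eta<T<\infty$ such that each of $\int_0^\eta$, $\int_\eta^T$, $\int_T^\infty$ of $|I_n - I|$ contributes less than $\e/3$ uniformly in $x \in \overline{\U}$, for all sufficiently large $n = 2^m$.

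For the tail $[T,\infty)$, Lemma \ref{welldefined} gives $|I_n(t,x)| \le C e^{-Dt}$ (using that $|\U_{1,n}|/n^2 \to \rm{area}(\U_1)>0$, hence is bounded away from zero for large $n$), while Lemma \ref{bh} combined with $\rm{area}(\U_1)=\rm{area}(\U_2)$ gives $|I(t,x)| \le C e^{-ct}$. Taking $T$ large makes this piece $<\e/3$ uniformly in $n$ and $x$. For the short window $[0,\eta]$, both $|I_n|$ and $|I|$ are uniformly bounded (since $\PP_x(X(t)\in \U_{i,n}) \le 1$ and $p(t,x,\U_i) \le 1$, together with the area convergence above), so this piece is at most $C\eta < \e/3$ once $\eta$ is small.

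The main content lies on $[\eta,T]$. Here I would invoke the uniform heat-kernel convergence established in \cite{BC,wtf,thes}: along the dyadic sequence $n = 2^m$, the interpolated density $p_n(t,x,y)$ of \eqref{normkernel} converges to the reflected-Brownian-motion kernel $p(t,x,y)$ uniformly on the compact set $[\eta,T]\times\overline{\U}\times\overline{\U}$. Using \eqref{abbre1234} and the fact that every vertex of $\U_{1,n}\cup \U_{2,n}$ has degree $4$ for large $n$, we rewrite
\[
I_n(t,x) = \frac{2}{|\U_{1,n}|/n^2}\Bigl[\frac{1}{n^2}\sum_{y \in \U_{1,n}} p_n(t,x,y) - \frac{1}{n^2}\sum_{y \in \U_{2,n}} p_n(t,x,y)\Bigr].
\]
Each inner sum is a Riemann approximation of $\int_{\U_i} p(t,x,y)\,dy = p(t,x,\U_i)$, where the lattice $\frac{1}{n}\Z^2$ provides cells of area $1/n^2$. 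Combining (i) uniform convergence $p_n \to p$, (ii) uniform continuity of $p$ on $[\eta,T]\times\overline{\U}\times\overline{\U}$ from Theorem \ref{properties}(a), which guarantees that the Riemann sums converge to the integrals uniformly in $x$, and (iii) the area convergence $|\U_{i,n}|/n^2 \to \rm{area}(\U_i)$, one gets $I_n(t,x) \to I(t,x)$ uniformly on $[\eta,T]\times\overline{\U}$. Integrating yields the desired $\e/3$ bound on the middle piece, finishing the proof.

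The primary obstacle is the middle step, which rests on importing the uniform kernel convergence from \cite{BC,wtf,thes} and checking that it is preserved under the harmonic interpolation scheme described just before the lemma. The point is that the interpolation used here is precisely the one under which the cited works formulate their convergence, so the transfer is automatic. The restriction to $n = 2^m$ is inherited from the form in which the local CLT is stated in those references; once uniform kernel convergence is granted, the Riemann-sum and area-scaling estimates assembling $I_n \to I$ are routine.
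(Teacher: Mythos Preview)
Your proposal is correct and follows essentially the same route as the paper: the same three-regime decomposition of the time integral, the same tail control via Lemmas \ref{welldefined} and \ref{bh}, the same crude bound on the short window, and the same middle-piece argument combining the local CLT (Theorem \ref{clt} in the paper, which is the result you cite from \cite{wtf,thes}) with a Riemann-sum approximation using continuity of $p$ from Theorem \ref{properties}(a). The only cosmetic difference is that the paper packages the comparison termwise in $G_n$ and $G$ rather than via $I_n-I$, and it explicitly invokes the interpolation to extend the tail bound from $\U_n$ to $\overline{\U}$, a point you correctly flag as automatic under the harmonic extension.
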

\nin
Before proving the lemma we state a local CLT result for random walk approximation of Reflected Brownian motion. 
\begin{thm}\label{clt}\cite[Theorem 2.12]{wtf} For all positive numbers $a<b$, $$\lim_{{m\rightarrow \infty} \atop {n=2^m}}\sup_{a\le t\le b}\sup_{x,y\in \overline{\U}}|p_n(t,x,y)-p(t,x,y)|=0$$
where $p(t,x,y)$ is the heat kernel of Reflected Brownian motion as defined in Subsection \ref{sub:contver}.
\end{thm}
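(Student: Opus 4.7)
The plan is to combine three ingredients: (i) an invariance principle asserting weak convergence of the rescaled random walk on $\U_n$ to Reflected Brownian motion on $\overline{\U}$, (ii) uniform equicontinuity (H\"older) estimates for the normalized discrete heat kernels $p_n(t,\cdot,\cdot)$, and (iii) uniform Gaussian upper bounds on $p_n$. Weak convergence alone yields only convergence of integrals against continuous test functions; combined with equicontinuity and uniform tail control, it upgrades to the pointwise uniform convergence claimed.

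First, I would invoke the invariance principle of \cite{BC,chendef,wtf}: the continuous-time random walk $X(\cdot)$ on $\U_n$ with exponential holding times of mean $1/(2n^2)$ converges in distribution on Skorokhod path space to Reflected Brownian motion $B_t$ on $\overline\U$. The smoothness of $\partial\U$ is used here, since for large $n$ the graph $\U_n$ is locally half-plane like near $\partial \U$, ruling out anomalous boundary behaviour. A direct consequence is that for every $f\in C(\overline\U)$, every $t>0$, and every sequence $x_n\to x$ in $\overline\U$,
$$\int_{\overline\U} p_n(t,x_n,y)\, f(y)\, dy \;\longrightarrow\; \int_{\overline\U} p(t,x,y)\, f(y)\, dy.$$

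Next, I would develop uniform regularity estimates for $p_n$. Viewed at scale $1/n$, $\U_n$ has uniform bounded geometry: for large $n$ it satisfies volume doubling and a Poincar\'e inequality on balls of Euclidean radius $\ge C/n$, both in the interior (where it coincides with $\Z^2$) and at the boundary (where the half-plane comparison applies). Delmotte's discrete analogue of the Grigoryan--Saloff-Coste equivalence then delivers a parabolic Harnack inequality together with Gaussian upper bounds $p_n(t,x,y)\le C\, t^{-1} \exp(-c|x-y|^2/t)$ with constants independent of $n$, and a uniform H\"older estimate: there exist $\alpha>0$ and $C=C(a,b,\U)$ such that for all large $n$, $t,t'\in[a,b]$ and $x,x',y,y'\in\overline\U$,
$$|p_n(t,x,y)-p_n(t',x',y')| \le C\bigl(|t-t'|^{\alpha/2}+|x-x'|^\alpha + |y-y'|^\alpha\bigr).$$
The analogous regularity for the continuum kernel $p$ is classical (cf.\ Theorem \ref{properties}).

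Finally, I would combine these inputs via an Arzel\`a--Ascoli argument: the family $\{p_n\}$ is precompact in $C([a,b]\times \overline\U\times\overline\U)$, and any subsequential limit $q$ is continuous and, by the first step, agrees with $p$ after integration against continuous $y$-test functions; hence $q\equiv p$ by continuity. Uniqueness of the limit upgrades subsequential convergence to full convergence and yields uniform convergence on $[a,b]\times\overline\U\times\overline\U$; the restriction to $n=2^m$ is a technical convenience of the dyadic coupling in \cite{wtf} and not essential. The hard part is the uniform boundary regularity in the second step: establishing a parabolic Harnack inequality and Gaussian bounds for the reflected discrete heat kernel with constants that do not degenerate as $n\to\infty$ at vertices where the degree drops. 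The smoothness of $\partial\U$ is crucial there, since it permits a bi-Lipschitz straightening of $\partial\U$ at macroscopic scales and transplants classical discrete half-plane heat-kernel estimates onto $\U_n$ with uniform constants.
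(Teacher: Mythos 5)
The paper does not prove this statement: Theorem~\ref{clt} is stated verbatim as a citation of \cite[Theorem 2.12]{wtf}, and the present article uses it as an external input (the same source also supplies the Gaussian heat kernel bounds in Theorem~\ref{wtfest} that the paper does use directly). So there is no internal proof to compare your attempt against; I can only assess whether your proposed route to the cited result is sound.

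Your three-ingredient scheme — (i) invariance principle giving weak convergence of the walk to Reflected Brownian motion, hence convergence of $\int p_n(t,\cdot,y)f(y)\,dy$ for $f\in C(\overline\U)$; (ii) a uniform (in $n$) parabolic Harnack / H\"older estimate for $p_n$ obtained from volume doubling and a Poincar\'e inequality on $\U_n$ at scales above $1/n$, via Delmotte's discrete analogue of the Grigor'yan--Saloff-Coste theorem; (iii) Arzel\`a--Ascoli compactness in $C([a,b]\times\overline\U\times\overline\U)$ to identify every subsequential limit with $p$ and upgrade to uniform convergence — is the standard and correct template for such local CLTs, and it is essentially the mechanism behind \cite[Theorem 2.12]{wtf} (the functional-analytic side of that work is phrased via Dirichlet form / Mosco convergence, but the heat-kernel regularity input is the same). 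You also correctly flag the genuine technical crux: the $n$-uniform boundary regularity, i.e.\ Harnack and Gaussian bounds whose constants do not degenerate near $\partial\U$ where vertex degrees drop; the smoothness of $\partial\U$ (the local half-plane description \eqref{localhalf}) is exactly what makes the requisite doubling and Poincar\'e constants uniform there. Two small remarks: your claim that the dyadic restriction $n=2^m$ is merely a convenience of a coupling in \cite{wtf} is plausible but asserted without verification, and a complete argument along your lines would need to make step (ii) quantitative near $\partial\U$ (checking that the half-plane comparison really yields $n$-independent Poincar\'e constants on boundary balls) rather than just asserting it. As a sketch, though, the proposal is a faithful account of how this theorem is proved.
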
 

\nin
Also for any set $A \subset \overline \U$ and $x\in \overline \U$ let $$p(t,x,A):=\int_{A}p(t,x,y)dy.$$

\noindent
\textbf{Proof of Lemma \ref{convergence1}.} Fix a small positive number $a$ and a large number $T$.
We split $G_n$ in the following way
\begin{eqnarray*}
G_n(x)&=&\frac{2n^2}{|\U_{1,n}|}\int_{0}^{a}\mathbb{P}_x(X(t)\in \U_{1,n})-\mathbb{P}_x(X(t)\in \U_{2,n})\,dt\\
 & + & \frac{2n^2}{|\U_{1,n}|}\int_{a}^{T}\mathbb{P}_x(X(t)\in \U_{1,n})-\mathbb{P}_x(X(t)\in \U_{2,n})\,dt\\
& + & \frac{2n^2}{|\U_{1,n}|}\int_{T}^{\infty}\mathbb{P}_x(X(t)\in \U_{1,n})-\mathbb{P}_x(X(t)\in \U_{2,n})\,dt
\end{eqnarray*}
and similarly, \begin{eqnarray*}
G(x)&=&\frac{2}{\rm{area}(\U_{1})}\int_{0}^{a}p(t,x,\U_{1})-p(t,x,\U_{2})\,dt \\
 & + & \frac{2}{\rm{area}(\U_{1})}\int_{a}^{T}p(t,x,\U_{1})-p(t,x,\U_{2})\,dt\\
& + & \frac{2}{\rm{area}(\U_{1})}\int_{T}^{\infty} p(t,x,\U_{1})-p(t,x,\U_{2})\,dt .
\end{eqnarray*}
\nin
To prove the lemma we will show that the three terms in the RHS for $G_n(x)$ are close to the corresponding terms for $G(x).$
From Lemma \ref{bh} it follows that uniformly over $x\in \overline{\U}$ and $t\ge T$
$$\bigl|p(t,x,\U_{1})-\frac{\rm{ area}(\U_{1})}{\rm{area}(\U)}\bigr|\le e^{-ct}$$ 
and similarly for $\U_{2}.$
Now since by hypothesis $\rm{ area}(\U_{1})=\rm{ area}(\U_{2}),$ 
$$\sup_{x \in \overline \U} \int_{T}^{\infty}|p(t,x,\U_{1})-p(t,x,\U_{2})|\,dt \le 2\int_{T}^{\infty}e^{-ct}dt.$$
Also by  Lemma \ref{welldefined} and our interpolation scheme 
$$
\sup_{x \in \overline \U}\int_{T}^{\infty}|\mathbb{P}_x(X(t)\in \U_{1,n})-\mathbb{P}_x(X(t)\in \U_{2,n})|\,dt \le 2\int_{T}^{\infty}e^{-Ct}dt
$$
for $T$ large enough. Thus in both the expressions the third term can be made arbitrarily small by choosing $T$ large enough.
Now the integral in the first term is at most $a$ and hence can be made small by choosing $a$ small enough. We now show that for any fixed $a$ and $T$ the middle term goes to zero as $n$ goes to infinity. 
First notice that since $|\U_{1,n}|=n^2 \rm{area}(\U_{1})+O(n)$ it suffices to just show, 
$$
\sup_{x\in \overline\U}\left|\int_{a}^{T}\bigl[\mathbb{P}_x(X(t)\in \U_{1,n})-\mathbb{P}_x(X(t)\in \U_{2,n})\bigr]\,dt-\int_{a}^{T}\bigl[p(t,x,\U_{1})-p(t,x,\U_{2})\,dt\bigr]\right|
$$
goes to $0$ as $n=2^m \rightarrow \infty.$
Using  Theorem \ref{clt} we choose  $n$ large enough such that $$\sup_{[a,T]}\sup_{x,y\in \overline{\U}}|p_n(t,x,y)-p(t,x,y)|\le \e$$ for some small number $\e$.
Now the continuity of the heat kernel $p(t,x,y)$ allows us to use Riemann sums to approximate 
$$\int_{a}^{T}[p(t,x,\U_{1})-p(t,x,\U_{2})]\,dt.$$
That is 
\begin{equation}\label{rieman}
\int_{a}^{T}(p(t,x,\U_{1})-p(t,x,\U_{2}))\,dt= \lim_{n\rightarrow \infty}\int_{a}^{T}\bigl[\sum_{y\in \U_{1,n}}\frac{p(t,x,y)}{n^2}-\sum_{y\in \U_{2,n}}\frac{p(t,x,y)}{n^2}\bigr]\,dt.
\end{equation}
More over the above convergence is uniform in $x$ on $\overline \U$. This is because $p(t,x,y)$ is continuous on the compact set $[a,T]\times \overline \U \times \overline \U.$
Now using Theorem \ref{clt} we get that for any fixed $a$ and $T$
\begin{eqnarray*}
\lim_{{n=2^m} \atop {m \rightarrow \infty}}\sup_{x\in \overline\U}\left|\bigl(\int_{a}^{T}[\sum_{\U_{1,n}}\frac{p_n(t,x,y)}{n^2}-\sum_{\U_{2,n}}\frac{p_n(t,x,y)}{n^2}]\,dt\bigr)-\bigl(\int_{a}^{T}[\sum_{\U_{1,n}}\frac{p(t,x,y)}{n^2}-\sum_{\U_{2,n}}\frac{p(t,x,y)}{n^2}]\,dt\bigr)\right|=0.
\end{eqnarray*}
Also by definition for all $x\in \overline \U,$$$\int_{a}^{T}[\sum_{\U_{1,n}}\frac{p_n(t,x,y)}{n^2}-\sum_{\U_{2,n}}\frac{p_n(t,x,y)}{n^2}]\,dt=\int_{a}^{T}[\mathbb{P}_x(X(t)\in \U_{1,n})-\mathbb{P}_x(X(t)\in \U_{2,n})]\,dt.$$
\nin
Thus by \eqref{rieman} 
\begin{equation}\label{limit}
\lim_{{n=2^m} \atop {m \rightarrow \infty}}\sup_{x\in \overline\U}\left|\int_{a}^{T}\bigl[\mathbb{P}_x(X(t)\in \U_{1,n})-\mathbb{P}_x(X(t)\in \U_{2,n})\bigr]\,dt-\int_{a}^{T}\bigl[p(t,x,\U_{1})-p(t,x,\U_{2})\,dt\bigr]\right|=0.
\end{equation}
Hence we have shown that the three terms which we decomposed $G_n(x)$ into at the beginning of the proof can be made arbitrarily close to the corresponding terms for $G(x)$ for large $n$. Thus the proof is complete.
\qed
\\

\noindent
\textbf{Proof of Theorem \ref{convergence}.}
The proof follows immediately from Lemma \ref{convergence1} and Theorem \ref{closedform1}.
\qed

\section{PART II: Hitting measure estimates on $\U_n$} \label{RWE}
\nin
The first result is a Beurling type estimate which says that for any connected subset $A$ of $\U_n$ with large enough diameter which is at a certain distance away from $\U_{1,n}$ the probability that random walk started from a neighboring site of $A$, hits $\U_{1,n}$ before hitting $A$ decays as a power law in $n$.   We first need the following definition.
\begin{definition}\label{conndef1}
Let $\U_n^*$ denote the graph $\U_n$ along with all the diagonals of the squares that are entirely in $\U_n.$ We will call connected subsets of $\U_n^*$ as $*-$ connected subsets of $\U_n.$
\end{definition}
\nin
Also recall the definition of random walk from \eqref{rwhk}. In the sequel for any subset $A\subset \U_n,$ $\tau(A)$ will denote the hitting time for the random walk.
\begin{lem}\label{hitprob2}Fix $c>0$. Consider $A\subset \U_n$ be $*-$connected. Also suppose that $\min( diam(A),d(\U_{1},A))\ge c$ .
Then for large $n$, for all such $A,$ $$\sup_{x\sim A}\mathbb{P}_{x}(\tau(\U_{1,n})\le \tau(A))\le \frac{C}{n^{\beta}}$$ for some positive $\beta,C$ depending only on $c$ and $\U$. Here $x \sim A$ means that $x \notin A$ and there exists $y \in A$ such that $x$ is a neighbor of $y.$
\end{lem}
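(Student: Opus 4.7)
\smallskip
\noindent\textbf{Proof plan.} Since $x \sim A$, $d(x, A) \le 1/n$, and $d(\U_1, A) \ge c$ by hypothesis, we have $d(x, \U_{1,n}) \ge c/2$ for all sufficiently large $n$. Consequently the walk started at $x$ must exit the Euclidean ball $B(x, c/2)$ before it can hit $\U_{1,n}$, and it therefore suffices to prove
$$\sup_{x \sim A} \mathbb{P}_x\bigl(\tau(\U_n \setminus B(x, c/2)) < \tau(A)\bigr) = O(n^{-\beta}).$$

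Next I exploit the $*$-connectedness and the diameter lower bound to reduce to a single-arc Beurling statement. Fix $y \in A$ with $y \sim x$. Since $\mathrm{diam}(A) \ge c$, there exists $z \in A$ with $d(y, z) \ge c/2$, and the $*$-connectedness of $A$ provides a $*$-path $\gamma \subseteq A$ from $y$ to $z$. Let $A' \subseteq \gamma$ be the initial portion of $\gamma$ up to its first exit from $B(y, c/4)$; then $A' \subseteq A$ is a $*$-connected set containing $y$ and touching $\partial B(y, c/4)$. It is therefore enough to bound $\mathbb{P}_x\bigl(\tau(\U_n \setminus B(x, c/2)) < \tau(A')\bigr)$.

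For this I invoke a discrete Beurling-type estimate. If $x$ lies at Euclidean distance $\ge c/2$ from $\partial \U$, so that $B(x, c/2) \subseteq \U$, the walk on $\U_n$ agrees with simple random walk on $\frac{1}{n}\Z^2$ inside this ball, and the classical discrete Beurling projection theorem (Kesten) yields the bound $C n^{-1/2}$: a $*$-connected set of Euclidean diameter $\Theta(1)$ blocks a walk started at an adjacent vertex from travelling Euclidean distance $\Theta(1)$ except with probability $O(n^{-1/2})$.

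The main obstacle is the case $B(x, c/2) \cap \partial \U \ne \emptyset$, where the walk genuinely reflects off the boundary and one cannot simply quote the $\Z^2$ result. Here I follow the route advertised in the introduction and pass through the continuum using the bounded-geometry heat-kernel technology. By Theorem \ref{clt} the normalized transition density $p_n$ converges uniformly on compact time windows to the reflected Brownian heat kernel $p$. Combined with the bi-Lipschitz nature of the conformal maps $\phi, \psi$ (Remark \ref{bili1}), one can locally straighten $\partial \U$ near $x$ to a half-plane, where the Beurling projection theorem for Brownian motion on a half-plane (a Schwarz-reflection consequence of its planar version) provides the continuum analog of the desired bound, with exponent $1/2$. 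The quantitative rate $n^{-\beta}$ is then transferred back to the walk by writing the hitting probability as a ratio of discrete Green-function sums, approximating these sums by their continuum counterparts via Theorem \ref{clt}, and applying the continuum half-plane Beurling estimate to $A'$. The resulting exponent $\beta > 0$ depends only on the half-plane Beurling exponent and on the bi-Lipschitz constants of $\psi$, hence only on $c$ and $\U$.
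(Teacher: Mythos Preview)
Your reduction to bounding $\mathbb P_x\bigl(\tau(\U_n\setminus B(x,c/2))<\tau(A')\bigr)$ for a $*$-connected arc $A'\ni y$ of diameter $\Theta(1)$ is sound, and in the interior case $B(x,c/2)\subset\U$ quoting Kesten's discrete Beurling estimate on $\frac1n\Z^2$ is fine.

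The boundary case, however, has a real gap. Theorem~\ref{clt} is a \emph{convergence} statement with no rate: it tells you $p_n(t,x,y)\to p(t,x,y)$ uniformly on compact time windows, but gives no bound on $|p_n-p|$ in terms of $n$. The continuum half-plane Beurling bound you want to invoke would say that reflected Brownian motion started at distance $1/n$ from $A'$ escapes $B(x,c/2)$ without hitting $A'$ with probability $O(n^{-1/2})$. To transfer this to the walk you would need the discrete hitting probability to differ from the continuum one by $o(n^{-1/2})$, and nothing in Theorem~\ref{clt} (or in ``writing the hitting probability as a ratio of Green-function sums'') supplies such an error term. Without a quantitative local CLT the argument yields only $o(1)$, not $n^{-\beta}$.

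The paper avoids this by never comparing to the continuum at all. It runs a multiscale argument directly on $\U_n$: around $x$ one places dyadic shells $C_{x,j}=\partial_{out}\bigl(B(x,2^j/n)\cap\U_n\bigr)$ for $j=1,\dots,\Theta(\log n)$, and shows that in each ``round'' $[\tau(C_{x,j}),\tau(C_{x,j+1}))$ the walk hits $A$ with probability at least some $d>0$ independent of $j$ and of whether the shell meets $\partial\U$. The uniformity near the boundary comes from the \emph{quantitative} Gaussian heat-kernel bounds of Theorem~\ref{wtfest} (not from Theorem~\ref{clt}): Lemma~\ref{chanceinterior} uses parts $ii.$ and $iii.$ to show that from any $z\in C_{x,j}$ the walk reaches a small ball $B_{z,j}$ lying in the interior of the annulus before exiting to $C_{x,j+1}$, with probability bounded below by a constant. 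From $B_{z,j}$ one is at distance $\Theta(2^j/n)$ from $\partial\U$ and from both shells, so Donsker's invariance principle gives a constant chance of completing a loop (or, when the annulus is a topological quadrilateral, reaching the appropriate boundary arc), which by the Jordan curve theorem forces an intersection with $A$. With $\Theta(\log n)$ rounds this gives $(1-d)^{\Theta(\log n)}=O(n^{-\beta})$. The point is that the heat-kernel bounds \eqref{et} and \eqref{glb} are scale-invariant and hold for all $x,y\in\U_n$, so no separate boundary analysis is needed.
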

\nin
The next lemma says that uniformly from any point $z$ at distance $1/2$ (any constant would work) from $x_1$ and all subsets $A\subset \U_n$ of large enough $\pi_{RW}$ measure,  the chance that the random walk does not hit $A$ before reaching a ball of radius $\e$ from $x_1$ goes to $0$ as $\e$ goes to $0$.

\begin{lem}\label{polydecay}Fix a constant $\alpha \in (0,1).$ For all $\U,x_1$ as in Setup \ref{para},  
$$\lim_{\e \to 0}\limsup_{n\rightarrow \infty}\sup_{{A \subset \U_n}\atop{\pi_{RW}(A)\ge \alpha }}\left[\sup_{z\in \U_n \setminus B(x_1,\frac{1}{2})}\mathbb{P}_z\bigl\{\tau(B(x_1,\e))\le \tau({A})\bigr\}\right]=0$$
where $\pi_{RW}(\cdot)$ is the stationary measure of the random walk on $\U_n$ and $B(x_1,\e)$ is the euclidean ball of radius $\e$ around $x_1.$
\end{lem}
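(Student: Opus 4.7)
The plan is to decompose
\[
\mathbb{P}_z\{\tau(B(x_1,\e))\le\tau(A)\}\;\le\;\mathbb{P}_z\{\tau(A)>T\}\;+\;\mathbb{P}_z\{\tau(B(x_1,\e))\le T\},
\]
choose $T$ large so the first summand is uniformly small over $z\in\U_n$ and over $A$ with $\pi_{RW}(A)\ge\alpha$, and then choose $\e$ small so the second summand is uniformly small over $z$ with $d(z,x_1)\ge 1/2$ and over all large $n$. The two pieces have different character: the first is a uniform recurrence statement for sets of macroscopic $\pi_{RW}$-mass, the second is a polarity-type statement for a microscopic ball at a boundary point.

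For the recurrence piece I would prove the following: there exist $T_0=T_0(\U)$ and $c=c(\U,\alpha)>0$ such that for all large $n$, all $z\in\U_n$, and all $A\subset \U_n$ with $\pi_{RW}(A)\ge\alpha$,
\[
\mathbb{P}_z\{\tau(A)\le T_0\}\;\ge\;\mathbb{P}_z\{X(T_0)\in A\}\;\ge\; c.
\]
The first inequality is trivial. For the second, Lemma \ref{bh} lets us pick $T_0$ so that $p(T_0,z,y)\ge 1/(2\,\text{area}(\U))$, and Theorem \ref{clt} together with the uniform heat-kernel bounds of \cite{BC,wtf} transfers this to $p_n(T_0,z,y)\ge 1/(3\,\text{area}(\U))$. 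Writing $\mathbb{P}_z\{X(T_0)\in A\}=\sum_{y\in A}p_n(T_0,z,y)\,m_n(y)$ and using that $\sum_{y\in A}m_n(y)=\pi_{RW}(A)\sum_{y'}m_n(y')\ge \alpha(\text{area}(\U)-o(1))$, one gets $c\ge \alpha/6$. Iterating via the strong Markov property at times $T_0,2T_0,\ldots$ yields $\mathbb{P}_z\{\tau(A)>kT_0\}\le(1-c)^k$; picking $T=kT_0$ with $(1-c)^k<\eta/2$ bounds the recurrence term by $\eta/2$, uniformly in $z$ and $A$.

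For the small-ball piece, the key input is that boundary points of $\U$ are polar for RBM on $\overline\U$: by Remark \ref{bili1} the boundary near $x_1$ can be analytically straightened to a line segment, and points are polar for reflected planar Brownian motion in the upper half-plane. Consequently, for every $z\in\overline\U$ with $d(z,x_1)\ge 1/2$,
\[
P_z\{B_s\in \overline B(x_1,\e)\text{ for some }s\le T\}\;\downarrow\;P_z\{\exists s\le T:\;B_s=x_1\}=0\quad\text{as }\e\downarrow 0.
\]
The left side is continuous in $z$ and monotone in $\e$, so Dini's theorem on the compact set $\overline\U\setminus B(x_1,1/2)$ upgrades this to uniform convergence in $z$. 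To transfer this to the random walk I would argue by contradiction: if the conclusion for $X$ failed, there would exist $\eta>0$, $\e_k\downarrow 0$, $n_k\to\infty$, and $z_k\to z^*\in\overline\U\setminus B(x_1,1/2)$ with $\mathbb{P}_{z_k}\{\tau(B(x_1,\e_k))\le T\}>\eta$. For any fixed $\e>0$ one has $B(x_1,\e_k)\subset B(x_1,\e)$ eventually, so upper semicontinuity of the hitting time of the closed set $\overline B(x_1,\e)$ under the path-space invariance principle of \cite{BC,wtf} would force $P_{z^*}\{B_s\in\overline B(x_1,\e)\text{ for some }s\le T\}\ge\eta$ for every fixed $\e>0$, contradicting the continuous statement above as $\e\downarrow 0$.

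The main obstacle is the joint uniformity in the starting point $z$ and in $n$ in the small-ball step, especially since the statement is a $\limsup$ over all $n$ rather than the dyadic subsequence that appears in Theorem \ref{clt}. A cleaner alternative to the Skorokhod-coupling argument above would be to prove a direct Beurling-type estimate near the boundary point $x_1$ for the walk on $\U_n$---structurally analogous to Lemma \ref{hitprob2} but centered on $\partial\U$---which would yield an explicit polynomial/logarithmic rate and avoid the dyadic restriction entirely. Either route relies on uniform Gaussian heat-kernel estimates on $\U_n$ valid for all $n$, which are available in \cite{BC,wtf}.
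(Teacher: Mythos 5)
Your decomposition into a recurrence term $\mathbb{P}_z\{\tau(A)>T\}$ and a small-ball term $\mathbb{P}_z\{\tau(B(x_1,\e))\le T\}$, with $T$ chosen as a function of $\e$, is exactly the structure of the paper's proof, and your treatment of the recurrence term (iterating a uniform one-step hitting probability to get a geometric tail) matches the paper's Lemma \ref{hitprob1}. One small caveat there: the paper obtains the uniform lower bound $\inf_z\mathbb{P}_z(X(\mathbf{t})\in A)\ge\alpha/4$ from the $L^\infty$ mixing time bound of Lemma \ref{lmt}, which holds for \emph{all} large $n$, whereas you invoke Theorem \ref{clt}, which is only stated along $n=2^m$; Lemma \ref{lmt} is the cleaner input here for precisely the reason you later worry about.

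The real divergence is in the small-ball piece, and the gap you flag there is genuine, not merely cosmetic. Your proposed route is soft: polarity of $x_1$ for RBM, Dini's theorem for uniformity in $z$, and then a weak-convergence/Portmanteau transfer to the walk. Even granting the first two steps, the transfer requires a path-space invariance principle that holds uniformly in the starting point and for \emph{all} $n$, whereas the statement to be proved has $\limsup_{n\to\infty}$ and the only convergence result quoted in the paper (Theorem \ref{clt}) is restricted to the dyadic subsequence. You correctly identify this as the obstruction. The paper avoids it entirely by not using weak convergence at all in this step. Instead it uses the strong Markov ratio bound
\[
\mathbb{P}_{z}\bigl(\tau(B(x_1,\e))< T\bigr)\le \frac{\int_{0}^{2T}\mathbb{P}_{z}\bigl(X(t)\in B(x_1,\e)\bigr)\,dt}{\inf_{w}\int_{0}^{T}\mathbb{P}_{w}\bigl(X(t)\in B(x_1,\e)\bigr)\,dt},
\]
and then the two-sided Gaussian heat-kernel estimates of Theorem \ref{wtfest} (valid for all $n$, not just the dyadic subsequence) to bound the numerator by $O(T\e^2)$ and the denominator below by $\Omega\bigl(\e^2\log(1/\e)\bigr)$, yielding the explicit rate $O\bigl(T/\log(1/\e)\bigr)$. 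Taking $T=\sqrt{\log(1/\e)}$ then kills both terms simultaneously. This is quantitatively sharper than what your soft argument would produce (an explicit logarithmic rate rather than a bare $o(1)$), and it sidesteps the dyadic restriction. The ``cleaner alternative'' you sketch at the end (a direct discrete Beurling-type estimate near $x_1$) points in the right direction --- the needed input is indeed the discrete Gaussian bounds --- but the paper's implementation is an occupation-time ratio rather than a shell-by-shell Beurling argument, which is simpler because no geometric information about the set $A$ near $x_1$ is needed, only its $\pi_{RW}$-mass.
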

The next lemma compares the hitting times of various sets i.e. how do the hitting times of various sets at different distances from the starting point compare.
\begin{lem}\label{gflower}
 Given small enough $\e> 0$,  for all $z \in \U_n \cap \U_{(\alpha-\sqrt{\e})}$,  and $y\in \U_n$ such that $d(y,z) \le \e^2$,
$$\mathbb{P}_{y}(\tau(z)< \tau(\U_n\setminus \{\U_n \cap \U_{(\alpha)}\}))= \Theta(\frac{\log(1/d(y,z))}{\log n}),$$
where the constant in the $\Theta$ notation depend on $\e, \alpha, \U.$
\end{lem}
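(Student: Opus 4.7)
The plan is to reduce the problem to a classical $\Z^2$ simple-random-walk (SRW) potential-kernel computation, exploiting the fact that $z$ sits well inside $A := \U_n \cap \U_{(\alpha)}$. First I would observe that hitting probabilities for the continuous-time walk of Section \ref{techass} coincide with those of its embedded discrete-time chain, and that every vertex of $A$ is at Euclidean distance at least $\alpha$ from $\partial\U$, so for $n$ large none of its incident $\tfrac{1}{n}\Z^2$-edges are deleted and the walk in $A$ is literally standard SRW on $\tfrac{1}{n}\Z^2$ killed on exit from $A$. Rescaling by $n$, set $\bar A := nA$, $\bar y := ny$, $\bar z := nz$. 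The hypothesis $z \in \U_{(\alpha-\sqrt{\e})}$ gives $B(z,\sqrt{\e})\subset \U_{(\alpha)}$, hence in lattice coordinates every $\bar w\in\partial \bar A$ satisfies $|\bar w - \bar z|\ge n\sqrt{\e}$; boundedness of $\U$ simultaneously yields the matching upper bound $|\bar w-\bar z|\le C_\U n$.

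Next I would invoke the potential kernel $a(\cdot)$ for 2D SRW --- harmonic on $\Z^2\setminus\{0\}$, $a(0)=0$, and satisfying the classical expansion $a(w)=\tfrac{2}{\pi}\log|w|+\kappa+O(|w|^{-2})$ (see Lawler's book). Because $a(\cdot-\bar z)$ is discrete harmonic on $\bar A\setminus\{\bar z\}$, optional stopping at $\sigma:=\tau(\bar z)\wedge\tau(\partial\bar A)$ gives $a(\bar y-\bar z)=\mathbb{E}_{\bar y}\bigl[a(X_\sigma-\bar z)\,\mathbf{1}_{\sigma=\tau(\partial\bar A)}\bigr]$, and combining this with the strong Markov property at $\tau(\bar z)$ on the event $\{\tau(\bar z)<\tau(\partial\bar A)\}$ produces the closed-form identity
$$
\mathbb{P}_y\bigl(\tau(z)<\tau(\partial A)\bigr) \;=\; \frac{\mathbb{E}_{\bar y}\bigl[a(X_{\tau(\partial\bar A)}-\bar z)\bigr] - a(\bar y-\bar z)}{\mathbb{E}_{\bar z}\bigl[a(X_{\tau(\partial\bar A)}-\bar z)\bigr]}.
$$

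To finish, I would plug in the asymptotic expansion of $a$. Every exit point satisfies $|\bar w-\bar z|\in[n\sqrt{\e},\,C_\U n]$, so $a(\bar w-\bar z)=\tfrac{2}{\pi}\log n + O_{\e,\U}(1)$ uniformly, making both $\mathbb{E}_{\bar y}\bigl[a(X_{\tau(\partial\bar A)}-\bar z)\bigr]$ and $\mathbb{E}_{\bar z}\bigl[a(X_{\tau(\partial\bar A)}-\bar z)\bigr]$ equal to $\tfrac{2}{\pi}\log n + O_{\e,\U}(1)$. Since $|\bar y-\bar z|=n\,d(y,z)\ge1$, we also have $a(\bar y-\bar z)=\tfrac{2}{\pi}\log n + \tfrac{2}{\pi}\log d(y,z)+O(1)$; the two $\tfrac{2}{\pi}\log n$ terms cancel and the numerator collapses to $\tfrac{2}{\pi}\log(1/d(y,z))+O_{\e,\U}(1)$.

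The constraint $d(y,z)\le\e^2$ forces $\log(1/d(y,z))\ge 2|\log\e|$, while tracing through the $a$-expansion shows the additive error is of order $|\log\e|$; thus for sufficiently small $\e$ the error is absorbed into a multiplicative constant depending only on $\e,\alpha,\U$, yielding $\mathbb{P}_y\bigl(\tau(z)<\tau(\partial A)\bigr) = \Theta\!\bigl(\log(1/d(y,z))/\log n\bigr)$ as claimed. The main obstacle is the first step: justifying uniformly over all exit points that $|\bar w-\bar z|$ lies in the annulus $[n\sqrt{\e},C_\U n]$ so that $a(X_{\tau(\partial\bar A)}-\bar z)$ has the clean form $\tfrac{2}{\pi}\log n + O_{\e,\U}(1)$ with no hidden $n$-dependence. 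Everything afterwards is bookkeeping with the explicit asymptotics of the 2D potential kernel.
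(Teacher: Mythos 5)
Your approach is genuinely different from the paper's and, granting your reading of $\U_{(\alpha)}$ as an interior region bounded away from $\partial\U$, it is essentially correct. The paper works directly with the identity $\mathbb{P}_y(\tau(z)<\tau)=\E_y\bigl[\int_0^\tau \mathbf{1}(X(t)=z)\,dt\bigr]/\E_z\bigl[\int_0^\tau \mathbf{1}(X(t)=z)\,dt\bigr]$ (with $\tau$ the exit time from $\U_n\cap\U_{(\alpha)}$), and estimates both Green's functions by combining the Gaussian heat-kernel upper and lower bounds of Theorem \ref{wtfest} with a renewal/mixing step via Lemma \ref{lmt}; this never leaves the bounded geometry of $\U_n$ and therefore never needs to identify the killed walk with whole-plane SRW. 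You instead observe that the killed walk never sees $\partial\U$, reduce to genuine $\Z^2$ SRW, and quote the classical potential-kernel asymptotics $a(w)=\tfrac{2}{\pi}\log|w|+\kappa+O(|w|^{-2})$ together with the optional-stopping identity to get an exact formula. Your route is cleaner and gives sharper additive control, but its validity hinges entirely on the single claim that $A$ lies at $\Omega_{\alpha,\U}(1)$ Euclidean distance from $\partial\U$; you should state and justify this explicitly (e.g.\ via the bi-Lipschitz bounds on $\phi,\psi$ from Remark \ref{bili1}), and note that the inclusion $B(z,\sqrt\e)\subset\U_{(\alpha)}$ really holds only up to a $\U$-dependent constant factor in the radius, since $\U_{(\alpha)}$ is conformal rather than metric. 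The paper's heat-kernel route is more robust precisely because it would survive a definition of $\U_{(\alpha)}$ that reaches the boundary, whereas yours would not. Finally, in your last step the additive error you must absorb is $O(|\log\e|)+O_\U(1)$ against a main term $\ge 2|\log\e|$; the relevant coefficient in front of $|\log\e|$ is $\tfrac{1}{\pi}<\tfrac{2}{\pi}\cdot 2$, so the cancellation does work, but this is the place where ``small enough $\e$'' is actually used and it deserves a line of arithmetic rather than being waved away.
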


\begin{remark}
One can prove the above lemma from conductance estimates on the graph $\U_n$. However for the sake of unification, all the proofs in this article will use heat kernel estimates for the random walk on $\U_n$ which we state soon.
\end{remark}
As mentioned before the above estimates are standard for the 
random walk on the entire lattice $\mathbb{Z}^2$. We make necessary adaptations to obtain the results for the random walk on the bounded geometry $\U_n.$
We start by stating a basic mixing time result for the random walk on $\U_n.$
\begin{lem}\label{lmt}Given a smooth domain $\U$ as in Setup \ref{para} and $\e>0$ there exists a constant $C=C(\e,\U)$ such that for large enough $n,$ $$t_{\infty}(\e)\le C$$ where $t_{\infty}(\cdot)$ is the $\mathbb{L}_{\infty}$ mixing time for the random walk on $\U_{n}$. 
\end{lem}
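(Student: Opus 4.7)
The plan is to combine the exponential mixing of Reflected Brownian Motion on $\overline\U$ (Lemma \ref{bh}) with the local CLT (Theorem \ref{clt}) in order to transfer $L_\infty$ closeness of the continuum heat kernel to the discrete heat kernel at a time $T$ depending only on $\e$ and $\U$. Recalling \eqref{normkernel}, we have $\mathbb{P}_x(X(t)=y)=p_n(t,x,y)\,m_n(y)$ with $m_n(y)=d_y/(4n^2)$, and the stationary measure satisfies $\pi_{RW}(y)=m_n(y)/m_n(\U_n)$, so
$$
\frac{\mathbb{P}_x(X(t)=y)}{\pi_{RW}(y)}=p_n(t,x,y)\,m_n(\U_n).
$$
Hence it is enough to exhibit a fixed time $T$ at which $p_n(T,x,y)\,m_n(\U_n)$ is uniformly within $\e$ of $1$ for all sufficiently large $n$.

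First I would apply Lemma \ref{bh} to pick $T=T(\e,\U)$, independent of $n$, with
$$
\sup_{x,y\in\overline\U}\Bigl|p(T,x,y)-\frac{1}{\mathrm{area}(\U)}\Bigr|\le \frac{\e}{3\,\mathrm{area}(\U)}.
$$
Next, at this fixed $T$, Theorem \ref{clt} yields $\sup_{x,y\in\overline\U}|p_n(T,x,y)-p(T,x,y)|\le \e/(3\,\mathrm{area}(\U))$ for $n=2^m$ large enough, and the triangle inequality gives
$$
\sup_{x,y\in\U_n}\Bigl|p_n(T,x,y)-\frac{1}{\mathrm{area}(\U)}\Bigr|\le \frac{2\e}{3\,\mathrm{area}(\U)}.
$$

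To close the argument, I would observe that since $\partial\U$ is analytic, $\U_n$ has only $O(n)$ vertices near $\partial \U$ (equivalently, of degree less than $4$), so a Riemann-sum estimate gives $m_n(\U_n)=\sum_{y\in\U_n}d_y/(4n^2)=\mathrm{area}(\U)+O(1/n)$. Multiplying the two bounds (and using $p_n(T,x,y)\le 2/\mathrm{area}(\U)$ from the preceding step) then yields $|p_n(T,x,y)\,m_n(\U_n)-1|\le \e$ uniformly in $x,y\in\U_n$ for $n$ large, which is exactly $t_\infty(\e)\le T$. The main obstacle is that Theorem \ref{clt} is stated only along the dyadic subsequence $n=2^m$; for a conclusion valid along every $n$, one would either need a more general local CLT or a comparison/monotonicity argument interpolating between dyadic scales. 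A subsidiary and routine step is the $O(1/n)$ perimeter estimate, which uses the analytic boundary hypothesis to bound the number of non-interior vertices.
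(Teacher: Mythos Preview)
Your argument is correct along the dyadic subsequence and is a genuinely different route from the paper's. The paper does not go through the continuum kernel at all: it invokes the isoperimetric inequality for $\U_n$ proved in \cite[Theorem 5.5]{wtf} and feeds it into the evolving-sets/conductance machinery of \cite{morp} to obtain the $L_\infty$ mixing bound directly. Your approach instead transfers the already-known $L_\infty$ mixing of Reflected Brownian Motion (Lemma \ref{bh}) to the discrete kernel via the local CLT (Theorem \ref{clt}), together with the elementary mass estimate $m_n(\U_n)=\mathrm{area}(\U)+O(1/n)$.

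What each buys: the paper's argument works for every large $n$ and uses only the ``lower-level'' isoperimetric input, which is logically upstream of the local CLT in \cite{wtf}; in that sense it is closer to a first-principles proof. Your argument is shorter and conceptually transparent --- mixing of the limit plus quantitative closeness of the approximants --- but it imports the full strength of Theorem \ref{clt}, and, as you correctly flag, that result is stated only along $n=2^m$, so without an additional interpolation step your conclusion is restricted to the dyadic scales (which, to be fair, is the only regime the paper ultimately uses).
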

\begin{proof} 
The fact is standard and the proof  follows from \cite[Theorem 13]{morp} and the isoperimetric inequality proved in \cite[Theorem 5.5]{wtf}.
\end{proof}
\noindent
We now state a standard property about the boundary of $\U$ as in Setup \ref{para}. Since the boundary $\partial \U$ is analytic there exists a $C>0$ and an $\e_0$ such that for all $x\in \partial \U$ there exists an orthogonal system of coordinates centered at $x=(x_1,x_2)$ such that for all $\e\le \e_0$ 
\begin{equation}\label{localhalf}
B(x,\e)\cap \U=\left\{ (x_1',x_2') \in B(x,\e): x_1'\in (x_1-\e,x_1+\e),x_2'\ge f(x_1') \right\}
\end{equation}  
 and $$|f(x_1')-x_2|\le C|x_1'-x_1|^2.$$ The above  is a simple consequence of Taylor expansion up to second order of the curve locally near $x$.  See Fig \ref{fig.local}.

\begin{figure}
\centering
\includegraphics[scale=.6]{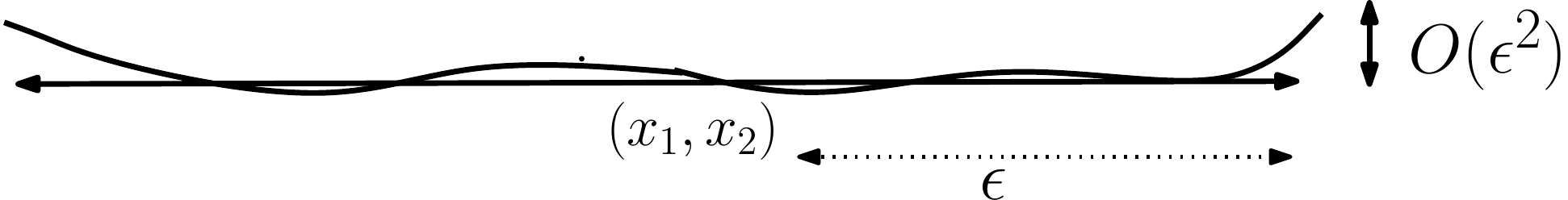}
\caption{Locally the region near the boundary looks like a half plane}
\label{fig.local}
\end{figure}

\noindent
As a simple corollary of the above fact we see that $\U$ satisfies the following property which shows that the $y_i$'s  in Setup \ref{para} can indeed be chosen. 
\begin{cor}\label{ias1}
Let $\U$ be as in Setup \ref{para}. Then there exists $\dd_0=\dd_0(\U)$ such that for all $x\in \overline{\U}$ and $\dd<\dd_0$ there exists $y\in \U$ such that 
\begin{eqnarray}\label{ia1}
d(y,x)&\le & \dd\\
\label{ia2}
B(y,\dd/2)&\subset & \U.
\end{eqnarray}
\end{cor}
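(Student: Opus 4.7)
The plan is to exploit the local half-plane description \eqref{localhalf} directly, and to construct $y$ by a case split on the distance from $x$ to the boundary. That display supplies constants $C$ and $\e_{0}$ together with, at every boundary point $x^{*}\in\partial\U$, an orthogonal coordinate chart centered at $x^{*}$ (with the inward normal along the positive $x_{2}'$-axis) in which $\partial\U$ is the graph $x_{2}'=f(x_{1}')$ of a function satisfying $f(0)=0$ and $|f(t)|\le Ct^{2}$. I will fix $\dd_{0}:=\min\{2\e_{0}/3,\,1/C\}$ and take $\dd<\dd_{0}$.

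If $d(x,\partial\U)\ge \dd$, I simply set $y:=x$: then $d(y,x)=0\le \dd$, and $B(y,\dd/2)\subset\U$ since $d(y,\partial\U)\ge \dd>\dd/2$. Otherwise $b:=d(x,\partial\U)<\dd$. By compactness of $\partial\U$ there is a closest boundary point $x^{*}$ with $d(x,x^{*})=b$, and a standard first-variation argument (differentiating $s\mapsto|x-\gamma(s)|^{2}$ along a smooth parametrization $\gamma$ of $\partial\U$) forces $x-x^{*}$ to be orthogonal to $\partial\U$ at $x^{*}$. In the local chart at $x^{*}$ this translates to $x=(0,b)$, so setting $y:=(0,\dd)$ in these coordinates yields $d(y,x)=\dd-b\le \dd$, which is \eqref{ia1}.

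For \eqref{ia2}, every $(z_{1},z_{2})\in B(y,\dd/2)$ obeys $|z_{1}|\le \dd/2$ and $z_{2}\ge \dd/2$, so the quadratic boundary bound gives $f(z_{1})\le Cz_{1}^{2}\le C\dd^{2}/4\le \dd/2\le z_{2}$ by the choice of $\dd_{0}$, whence $(z_{1},z_{2})\in\U$. The side condition $\dd\le 2\e_{0}/3$ keeps $B(y,\dd/2)\subset B(x^{*},3\dd/2)\subset B(x^{*},\e_{0})$, ensuring that the chart \eqref{localhalf} genuinely describes every part of $\partial\U$ that could intrude into the ball. There is no real analytic obstacle; the only thing to calibrate is $\dd_{0}$, which must be small enough that (i) the ball of radius $\dd/2$ around $(0,\dd)$ stays inside the local chart, and (ii) the quadratic boundary control $Ct^{2}$ is dominated by $\dd/2$ whenever $|t|\le \dd/2$—both subsumed in $\dd_{0}=\min\{2\e_{0}/3,\,1/C\}$.
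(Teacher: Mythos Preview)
Your proof is correct and follows essentially the same approach as the paper: split according to the distance from $x$ to $\partial\U$, and in the near-boundary case place $y$ along the inward normal at the nearest boundary point, using the quadratic bound from \eqref{localhalf} to verify $B(y,\dd/2)\subset\U$. The only cosmetic differences are that the paper splits at $d(x,\partial\U)>\dd/2$ and takes $y$ at height $\dd/2$ above the boundary (bounding $d(x,y)$ by the triangle inequality through the footpoint), whereas you split at $d(x,\partial\U)\ge\dd$ and take $y$ at height $\dd$; your version gives a little more clearance and lets you verify the containment by the clean chain $f(z_1)\le Cz_1^{2}\le C\dd^{2}/4<\dd/2<z_2$, while the paper simply asserts ``clearly $B(y,\dd/2)\subset\U$''.
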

\noindent
Recall that $d(y,x)$ is the euclidean distance between $x$ and $y$. $B(y,\dd)$ denotes the euclidean ball of radius $\dd$ with center at $y.$\\

\noindent
\begin{proof}Choose  $\dd_0\le \e_0/4$ such that $$C\dd_0^{2}\le \frac{\dd_0}{100}$$ where $\e_0$ and $C$ appear in \eqref{localhalf}. For any $\dd< \dd_0$ the lemma is immediate if $d(x,\partial \U)>\dd/2$. since then we can choose $y=x.$ \\
\noindent
Otherwise let $z=(z_1,z_2)\in \partial U$ be the closest point on the boundary to $x$. 
Now in the local coordinate system centered at $z$ as in \eqref{localhalf} choose $y=(z_1,z_2+\dd/2)$.
Then $$d(x,y)\le d(x,z)+d(z,y)\le \dd.$$ Also clearly $B(y,\dd/2)\subset \U$  and hence we are done. See Fig \ref{fig.iap}.
\end{proof}

\noindent
For the remaining part we need  the  following gaussian upper and lower bounds on the heat kernel of random walk on $\U_n$. Recall the definition of $p_n(t,x,y)$ from \eqref{normkernel}.
\begin{thm}\label{wtfest}Given $\U$ as in Setup \ref{para},
\begin{itemize}
\item [i.]
\label{lclt1}\cite[Theorem 2.9]{wtf}
for any $T$ there exists  $C_1$ and $C_2$ such that for all $t\in[\frac{1}{n},T],$
\begin{equation}\label{lclt2}
p_n(t,x,y)\le \frac{C_1}{(t^{1/2}\vee\frac{1}{n})^2} \exp(-C_2\frac{d(x,y)^2}{t}), 
\end{equation}\\
\item [ii.]
\cite[Cor 2.2.5]{thes}\label{exittime} for any $T>0$ there exists $C=C$ and $N$ such that 
\begin{equation}\label{et}
\mathbb{P}_{x}[\sup_{s\le t}d(X_s,x)\ge \eta]\le C\exp\left(t-\frac{\eta}{4(\frac{1}{n}\lor t^{1/2})}\right)
\end{equation}
for all $n> N$, $t\le T,\,\,x\in \U_n,\,\, \eta>0$,\\
\item [iii.] \cite[Theorem 2.10]{wtf} \label{glb1}there exists $C_1,C_2$ dependent only on $T$ such that 
\begin{equation}\label{glb}
p_{n}(t,x,y)\ge \frac{C_1}{(\frac{1}{n}\vee t^{1/2})^2}\exp\left(-C_2\frac{d(x,y)^2}{t}\right)
\end{equation}
for all $t\le T$ and $x,y \in \U_n.$
\end{itemize}
Note that all the constants above implicitly depend on $\U.$
\end{thm}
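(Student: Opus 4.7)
The plan is to deduce all three items as consequences of on-diagonal heat-kernel bounds combined with classical heat-kernel technology (Davies' perturbation method, chaining, Nash inequality), specialized to the graph $\U_n$. The geometric input in every step is the same three ingredients: bounded degree of $\U_n$, the isoperimetric/Cheeger inequality of \cite[Thm 5.5]{wtf} that already underlies Lemma \ref{lmt}, and the local half-plane description \eqref{localhalf}. Since \cite{wtf} and \cite{thes} each establish these bounds in their respective frameworks, the task of this excerpt is essentially to explain how to organize their inputs; below I record how I would organize them from scratch.

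For the upper bound (i), I would first establish the on-diagonal bound $\sup_{x,y}p_n(t,x,y)\le C/(t\vee n^{-2})$. For $t\lesssim n^{-2}$ this is immediate from bounded degree; for $t\gtrsim n^{-2}$, the isoperimetric inequality on $\U_n$ yields a Nash inequality, which by Nash--Moser iteration produces the stated on-diagonal decay (this is essentially a rerun of the argument behind Lemma \ref{lmt}). Off-diagonal Gaussian decay is then obtained by Davies' perturbation method: conjugate the discrete semigroup by $e^{\lambda h}$ with $h$ a graph-Lipschitz function on $\U_n$ (the Euclidean distance $h(\cdot)=d(\cdot,x)$ works thanks to \eqref{localhalf}, which gives graph-distance to Euclidean-distance comparability uniformly in $n$), bound the perturbed operator norm by $\exp(C\lambda^2 t)$, and optimize in $\lambda$ to recover $\exp(-C_2 d(x,y)^2/t)$.

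The exit-time estimate (ii) follows from (i) by a time-discretization plus union bound. Split $[0,t]$ into $k$ blocks of length $t/k$; on each block, the Markov property together with the Gaussian upper bound controls the probability of a displacement of order $\eta/k$, so a union bound over the $k$ endpoints combined with a short chaining increment (to pass from endpoint control to supremum control via a Lévy-type inequality for the continuous-time walk) produces the claimed tail $\exp\bigl(t-\eta/(4(n^{-1}\vee t^{1/2}))\bigr)$ after optimizing $k$. For the lower bound (iii), I would first prove a near-diagonal lower bound $p_n(t,x,y)\ge c/t$ valid whenever $d(x,y)\le c\sqrt t$: conservation $\sum_y p_n(t,x,y)m_n(y)=1$ forces a nonnegligible fraction of the mass to sit in $B(x,C\sqrt t)$ by (ii), and the upper bound (i) forces the peak density to be spread over a comparable region, giving the pointwise lower bound by a variance-type argument. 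Arbitrary pairs are then handled by chaining: connect $x$ and $y$ by a sequence of $O(d(x,y)^2/t)$ overlapping balls of radius $\sqrt t$ in $\U_n$, apply the Chapman--Kolmogorov identity, and the Gaussian factor $\exp(-C d(x,y)^2/t)$ drops out.

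The main obstacle is uniformity of constants in $n$ up to $\partial\U$ in (iii). The chaining argument requires that every ball $B(z,\sqrt t)\cap \U_n$ used as a ``link'' contains comparably many lattice points, so that the near-diagonal lower bound is non-degenerate even when $z$ is near $\partial\U$. This is exactly what \eqref{localhalf} provides: locally near any boundary point, $\U_n$ looks like a half-plane lattice up to a $C^2$-small perturbation, so volumes of boundary balls are comparable (up to constants depending on $\U$ only) to volumes of interior balls. Verifying this comparison uniformly along the boundary, and feeding it both into the Nash inequality behind (i) and into the chaining behind (iii), is the real technical work; once it is in place the argument is a standard Delmotte/Grigoryan--Saloff-Coste-style execution.
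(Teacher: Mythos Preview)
The paper does not prove this theorem at all: each of the three items is simply quoted from the literature (\cite[Theorem 2.9]{wtf}, \cite[Cor 2.2.5]{thes}, \cite[Theorem 2.10]{wtf}), with no accompanying argument. So there is no ``paper's own proof'' to compare against.

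Your proposal is a reasonable high-level outline of the Delmotte/Grigor'yan--Saloff-Coste machinery that underlies such estimates, and the ingredients you name (bounded degree, the isoperimetric inequality from \cite{wtf}, and the local half-plane comparison \eqref{localhalf}) are indeed the geometric inputs the cited references use. In that sense your sketch is consistent with how \cite{wtf} and \cite{thes} actually establish these bounds. That said, since the present paper treats the theorem as a black box, the appropriate ``proof'' here is simply a citation; your detailed sketch, while not wrong in spirit, is supplying content the paper deliberately outsources.
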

\begin{figure}
\centering
\includegraphics[scale=.5]{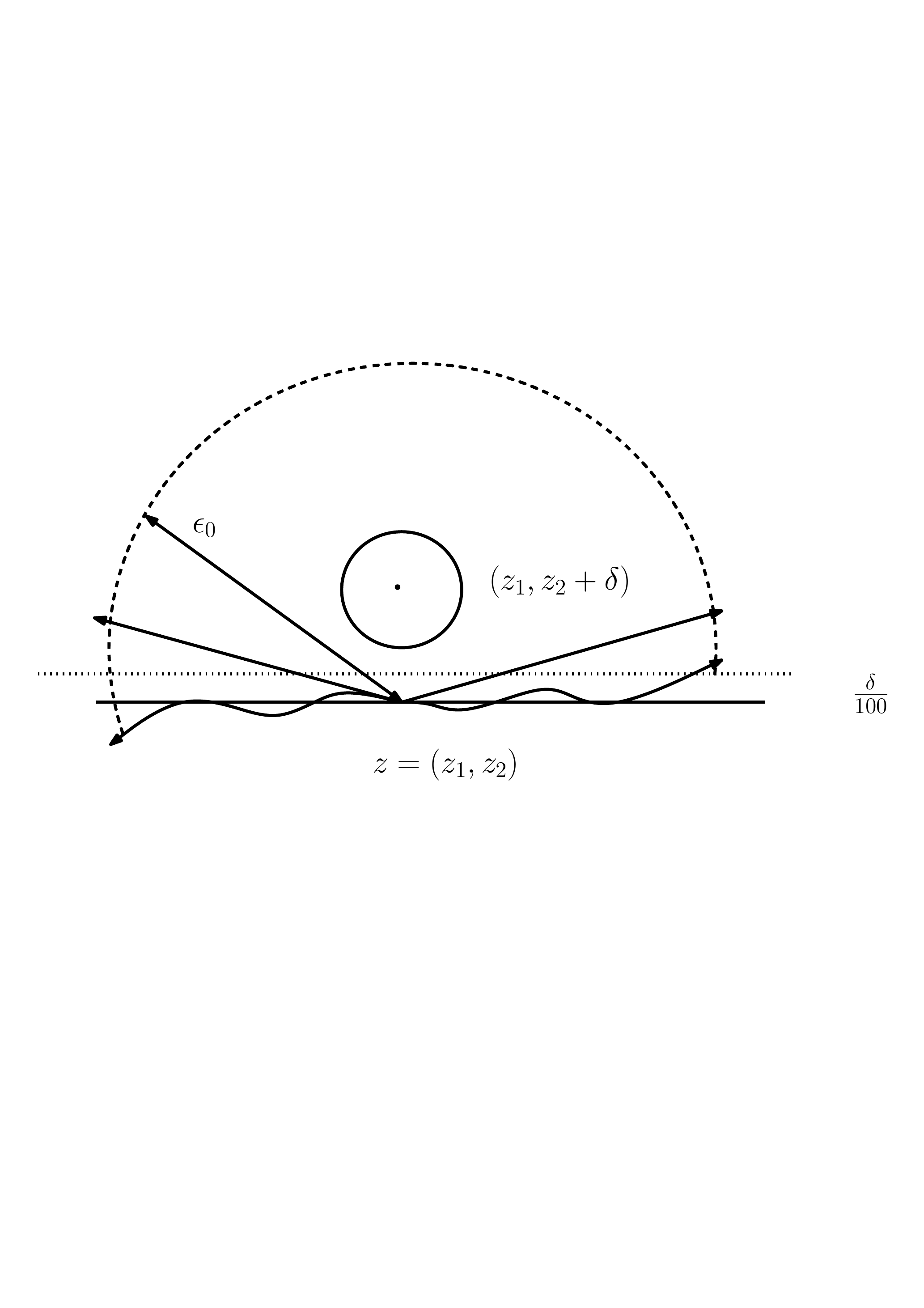}
\caption{Illustrating the proof of Corollary \ref{ias1}.}
\label{fig.iap}
\end{figure}
\nin
We now proceed towards proving Lemma \ref{hitprob2}. To this end we need a preliminary result.
We start with a definition.
Fix any $x\in \U_n$. Let us consider concentric discs $D_{x,j}$ around $x$ of radius $\frac{2^j}{n}$ i.e.  
\begin{equation}\label{shell1}
D_{x,j}:=B(x,\frac{2^j}{n})\cap\U_n.
\end{equation}
Recall $\partial_{out}$ from \eqref{bdry1}.
Let  
\begin{equation}\label{shellbdr}
C_{x,j}:=\partial_{out}D_{x,j}.
\end{equation}
\nin
We now show that starting from any point in $\U_n$ which is $\frac{1}{n}$ distance away from $C_{x,j-1}$ there is a constant chance of the random walk moving away from the boundary of $\U$, before hitting $C_{x,j}$. That is, it is unlikely that the random walk path on $\U_n$ hitting $C_{x,j}$ from $C_{x,j-1}$  stays uniformly close to the boundary of $\U$.
The next result makes the above statement precise. Let $\e< \dd_0$ where $\dd_0$ appears in the statement of Corollary \ref{ias1}.
Now by Corollary \ref{ias1} for every $j\le \log(\dd_0 n)$ and all $z\in C_{x,j-1}$, there exists a $y$ such that $$d(z,y)\le\frac{\e2^{j-1}}{n},\mbox{ and }   B\bigl(y,\frac{\e2^{j-1}}{2n}\bigr) \subset \U.$$
Define,
\begin{equation}\label{shortnote1}
B_{z,j-1}:=B\bigl(y,\frac{\e2^{j-1}}{4n}\bigr).
\end{equation} 
For notational brevity we choose to suppress the $\e$ dependence in  the notation above. 
\begin{lem}\label{chanceinterior}There exists constants $\e,c$ such that for all $x\in \U_n,$  for any $z\in C_{x,j-1}$ with $j=1, \ldots, \log (\dd_0n)$,
$$\mathbb{P}_{z}(\tau (B_{z,j-1})<\tau(C_{x,j}))\ge c,$$
 where $\dd_0$ appears in the statement of Corollary \ref{ias1}.
\end{lem}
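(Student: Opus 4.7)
The plan is a scaling argument. Set $r := 2^{j-1}/n$ and $t := cr^2$ for a small constant $c$ to be chosen, and observe that for $\e$ small enough one has $B_{z,j-1} \subset B(z, r/2) \subset D_{x,j}$. The joint event
\[
E := \{X(t) \in B_{z,j-1}\} \cap \{\sup_{s \le t} d(X_s, z) < r/2\}
\]
then forces $\tau(B_{z,j-1}) \le t < \tau(C_{x,j})$, because the walk has stayed strictly inside $D_{x,j}$ throughout $[0,t]$. So it suffices to bound $\mathbb{P}_z(E)$ below by a positive constant.

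For the first event, I would apply the Gaussian lower bound \eqref{glb} at time $t = cr^2$. For any $w \in B_{z,j-1}\cap \U_n$ one has $d(z,w) \le 5\e r/4$, and, in the regime $t^{1/2}\ge 1/n$, this yields $p_n(t,z,w) \ge (C_1/(cr^2))\exp(-C_2\e^2/c)$. Since $B_{z,j-1}$ contains $\Theta(\e^2 r^2 n^2)$ interior lattice points (of degree $4$, hence with symmetrising measure $1/n^2$), summing gives
\[
\mathbb{P}_z(X(t)\in B_{z,j-1}) \ge \frac{C_3\e^2}{c}\exp\!\Bigl(-\frac{C_2\e^2}{c}\Bigr).
\]
For the second event, since $z\in C_{x,j-1}$ has $d(z,x)\le r+1/n$, reaching $C_{x,j}$ forces the walk to move at least $r/2$ from $z$ (for $j\ge 2$). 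The exit time bound \eqref{et} with $\eta = r/2$ then yields
\[
\mathbb{P}_z\!\Bigl(\sup_{s\le t}d(X_s,z)\ge r/2\Bigr) \le C\exp\!\Bigl(cr^2 - \tfrac{1}{8\sqrt{c}}\Bigr),
\]
which, because $r\le \dd_0$, can be made small by choosing $c$ small.

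To finish, I would first fix the ratio $\e^2/c$ at a moderate constant (say equal to $1$), which makes the first lower bound equal to a definite positive constant $a > 0$; then shrink $c$ (and simultaneously $\e = \sqrt{c}$) until the exit bound is below $a/2$. This gives $\mathbb{P}_z(E) \ge a/2$, hence the lemma for all $j$ in the Gaussian regime. The boundary case of very small $j$, where either $t^{1/2}$ is comparable to $1/n$ or $B_{z,j-1}$ contains too few lattice points, can be handled by a short bounded-step argument using the local half-plane description \eqref{localhalf} to produce a constant-probability path of $O(1)$ steps from $z$ into the interior. The main delicate point is the mismatch between the two exponentials: the hit probability carries $e^{-C_2\e^2/c}$ and so decays at rate $1/c$, while the escape bound carries $e^{-1/(8\sqrt c)}$ and decays only at rate $1/\sqrt c$, which is precisely why one must fix $\e^2/c$ first and take $c$ small only afterwards.
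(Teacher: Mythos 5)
Your proof is correct and is essentially the paper's argument: both pick a time scale $t\asymp\e^2 (2^{j}/n)^2$, use the Gaussian lower bound \eqref{glb} to get $\mathbb{P}_z(X(t)\in B_{z,j-1})=\Theta(1)$ (the $\e^2$ in the lattice count of $B_{z,j-1}$ cancelling against the $\e^2$ in the time), use the exit-time bound \eqref{et} to get $\mathbb{P}_z(\tau(C_{x,j})\le t)=O(e^{-c/\e})$, and subtract. Your decoupling of $c$ from $\e$ and the explicit remark about the small-$j$ regime are cosmetic refinements of the same argument; the paper implicitly fixes $c\asymp\e^2$ from the start, which is why the ``delicate rate mismatch'' you flag does not actually arise there.
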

See Fig. \ref{f.pathlike}.
\begin{figure}
\center
\includegraphics[scale=.8]{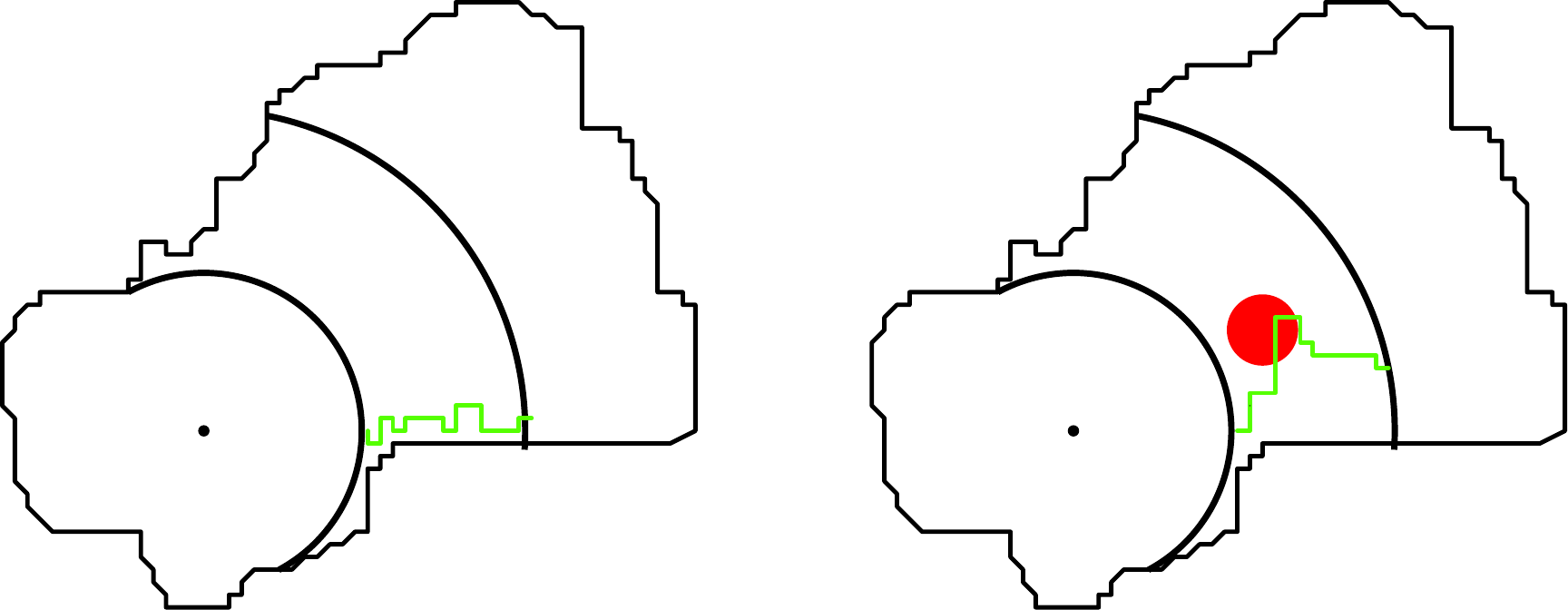}
\caption{The left hand side shows the unlikely event that a path from $C_{x,j-1}$ hits $C_{x,j}$  and the path stays close to the boundary of $\U$. The right hand side shows a typical path. }
\label{f.pathlike}
\end{figure}
\\\\
\begin{proof}To prove the lemma we use Theorem \ref{wtfest} $ii.$ and $iii$.
Since for all $z\in C_{x,j-1},$ $d(z,C_{x,j})\ge \frac{2^{j-1}}{n}$ taking $T=1$, $t=\e^2\frac{2^{2j}}{n^2}$ and $\eta=\frac{2^{j-1}}{n}$, by \eqref{et} we have 
$$\mathbb{P}_{z}[\tau({C_{x,j}})\le t ]\le Ce\exp\left(-\frac{1}{4\epsilon}\right).
$$
Now we look at the chance that $\tau(B_{z,j-1})\le t.$ Clearly 
$$\mathbb{P}_{z}(\tau({B_{z,j-1}})\le t)\ge \mathbb{P}_{z}(X(t)\in B_{z,j-1}).$$
Let us recall from \eqref{normkernel} that by definition $$\mathbb{P}_z(X(t)\in B_{z,j-1})=\sum_{y \in B_{z,j-1}}p_n(t,z,y)m_n(y).$$ 
Now since all the points in $B_{z,j-1}\cap \U_n$ have $4$ neighbors $m_n(y)=\frac{1}{n^2}$ for all $y \in B_{z,j-1}.$ Let us for the moment denote it by $m.$
By \eqref{glb} we get 
\begin{eqnarray*}
\mathbb{P}_{z}(X(t)\in B_{z,j-1})\ge C_1m\frac{|B_{z,j-1}\cap \U_n |}{ t}\exp\left( -C_2  \right)=\frac{{\epsilon}^2 2^{2(j-1)}}{\pi n^2\epsilon^2 2^{2j}/n^2}= \Theta(1).
\end{eqnarray*}
\nin
Thus 
\begin{eqnarray*}
\mathbb{P}_{z}(\tau({B_{z,j-1}})<\tau(C_{x,j}))&\ge & \mathbb{P}_{z}(\tau(B_{z,j})<t)-\mathbb{P}_{z}(\tau(C_{x,j})<t)\\
&\ge & \Theta(1)-O(\exp(-\frac{1}{4\e})).
\end{eqnarray*} 
Thus we are done by choosing $\e $ small enough.
\end{proof}

\begin{figure}[hbt]
\centering
\includegraphics[scale=.8]{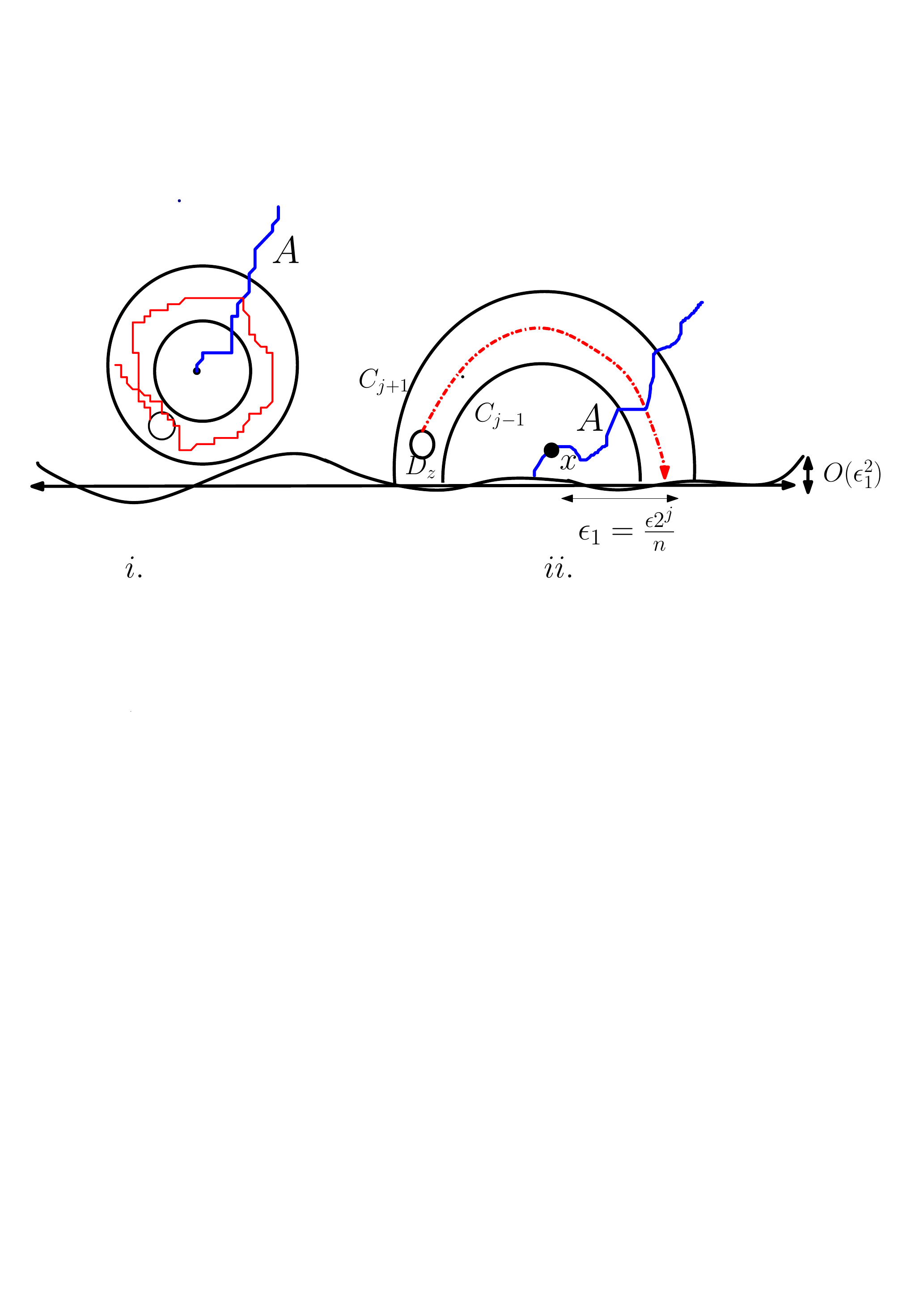}
\caption{Random walk completing a full circle oriented in one direction will hit the set $A.$}
\label{fig.ill1}
\end{figure}
\nin
We are now ready to prove Lemma \ref{hitprob2}. As remarked earlier the basic structure of the arguments are standard and are used to prove similar statements on the whole lattice. We make the necessary additional arguments to prove the statement in the bounded geometry of $\U_n$. \\ 

\noindent
\textbf{Proof of Lemma }\ref{hitprob2}. We will fix $\dd_1 <\dd_{0}/4\wedge  c$ where $\dd_0$ appears in Corollary \ref{ias1} and $c$ appears in the statement of the lemma. Let $x \sim A$.
Recall the definitions of $D_{x,j}$ and $C_{x,j}$ from \eqref{shell1} and \eqref{shellbdr} with center $x$. 
Let us denote by $$I_j:=D_{x,j}\setminus D_{x,j-2}.$$ 
for $j=1 \ldots \log n\dd_1 $.
Now given any $x$ if $d(x,\partial \U)\ge \dd_1$ then for all $j,$ $I_j$ is an annulus. 
Otherwise some of the $I_{j}$'s are a topological quadrilateral and two of the four sides are a part of $\partial \U$. See Fig \ref{fig.ill1} ii. If $\dd_1$ is chosen to be small enough clearly these are the only two possibilities. For $z \in C_{x,j}$ recall $B_{z,j}$ from \eqref{shortnote1}. 
By Lemma \ref{chanceinterior} there is a positive $c_1$ such that for all $j=1,\ldots, \log(\dd_1n), $
\begin{equation}\label{lower2}
\inf_{z\in C_{x,j}}\mathbb{P}_z(\tau(B_{z,j})\le \tau(C_{x,j+1}))\ge c_1.
\end{equation}
\noindent
Now any point in $B_{z,j}$ is in the interior of $I_j$ and at distance at least $\frac{\e 2^{j-1}}{4n}$ from $C_{x,j+1}\cup C_{x,j-1}\cup \partial \U.$ Let $z'$ be any such point.
Since by hypothesis $A$ is a $*-$connected set and $diam(A)>c$ 
the following observations are straightforward corollaries of the Jordan curve theorem:\\
for $j=1 \ldots \log(\dd_1n),$
\begin{itemize}
\item   
If $I_j$ is an annulus:  any curve starting from $z'$ which stays $\frac{\e}{100}\frac{2^{j-1}}{n}$ away from the boundary, makes a full circle and completes a closed loop hits $A$. Fig \ref{fig.ill1} $i.$\\
\item 
If $I_j$ is a topological quadrilateral: Out of the four sides, two sides of $I_j$ are a part of $\partial\U.$  There exists one of these two sides  such that any curve $\gamma$ which starts from $z'$ and stays $\frac{\e}{100}\frac{2^{j-1}}{n}$ away from the three sides before hitting that side hits $A.$ Fig \ref{fig.ill1} $ii.$  \\
\end{itemize}
\noindent
By the Donsker invariance principle, simple random walk does both the above things with constant probability only dependent on $\e $ and independent of $j$. 
Thus the chance that random walk started from $x$ hits $A$ between $\tau(C_{x,j})$ and $\tau({C_{x,j+1}})$ is at least $d$ for some constant $d=d(\e)$. This follows since from the location $z$ at time $\tau(C_{x,j})$ the random walk with constant chance hits $B_{z,j}$ and from there hits $A$ with constant probability before $\tau(C_{x,j+1})$.

\noindent
We consider the time interval $[\tau(C_{x,j}),\tau(C_{x,j+1}))$ as the $j^{th}$ round. By the previous discussion in each round the chance to hit $A$ is at least $d$ for $j=1 \ldots \log (n\dd_1)$.
Also since by hypothesis $d(A,\U_1)\ge c$ and $x\sim A,$  $$\tau(C_{x,j})<\tau(\U_1)$$ for all such $j.$
Hence $$\sup_{x\sim A}\mathbb{P}_{x}\left(\tau(\U_1)\le \tau(A)\right)\le (1-d)^{\log(\dd_1 n)}.$$
\qed
\\\\
\noindent
We now state and prove another similar lemma. Recall the definitions of $y_1,\U_{1,n}$ from Setup \ref{para}.
\begin{lem}\label{capa}Let $0<\e_1< \e_2.$ Assume $A\subset \U_n$ is a connected set such that  
 $$d(\U_{1,n},A)\le \e_1.$$ Also assume  $A \cap \bigl(\U_n\setminus B(y_1,\e_2)\bigr)\neq \emptyset.$ 
Then $$\sup_{x\in \U_{1,n}}\mathbb{P}_x(\tau(\U_n\setminus B(y_1,\e_2))\le \tau(A))\le {C^{\log{\left(\frac{\e_2}{\e_1}\right)}}}, $$
for some $C=C(\U)<1$ independent of $n$.
\begin{figure}[hbt]
\centering
\includegraphics[scale=.5]{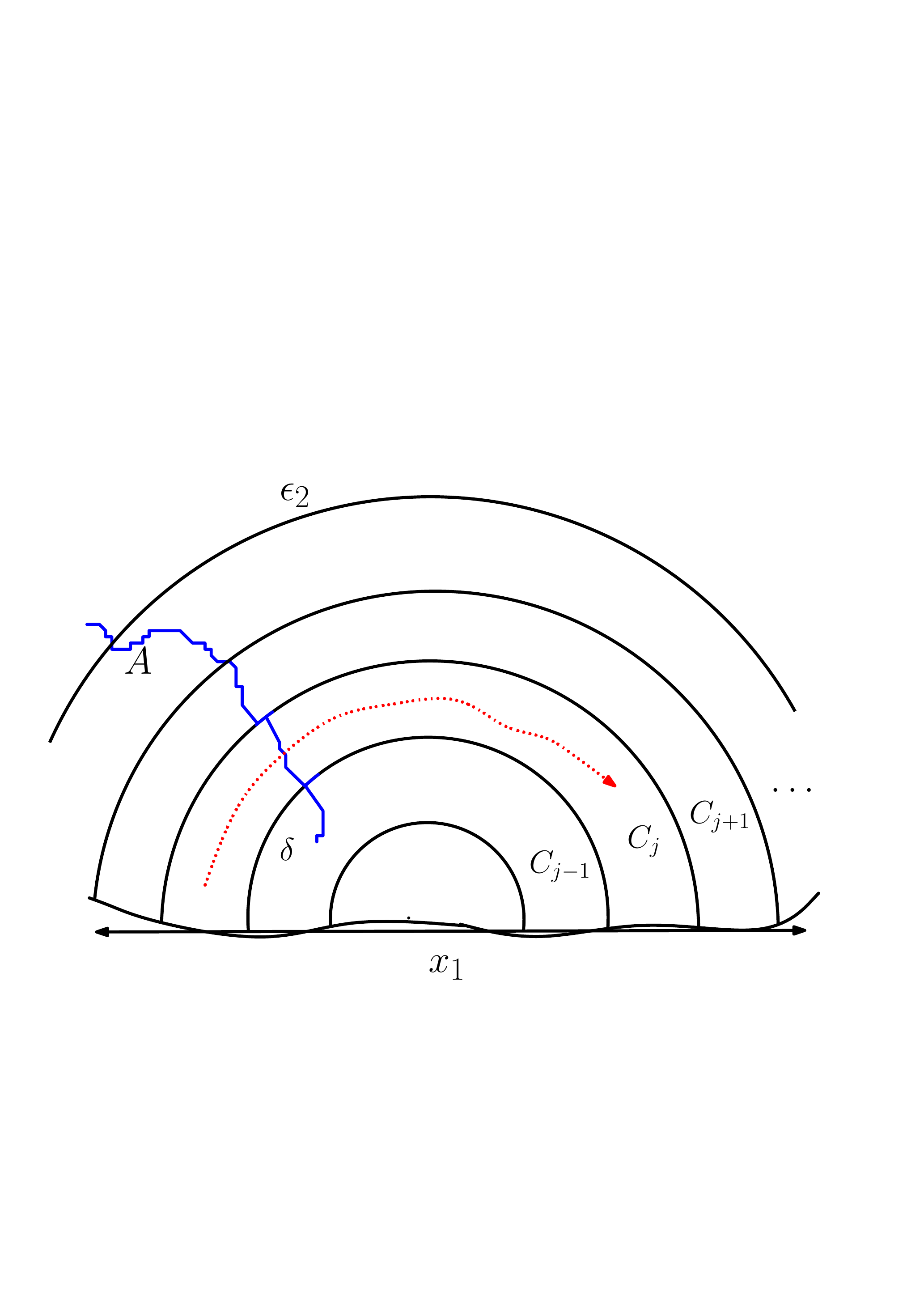}
\caption{Figure illustrating the proof of Lemma \ref{capa}. The red random walk path making a loop hits $A$.}
\label{f.ill12}
\end{figure}
\end{lem}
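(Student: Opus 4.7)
The plan is to adapt the multi-scale shell construction underlying the proof of Lemma~\ref{hitprob2}, centering the shells now at a point of $A$ closest to $\U_{1,n}$. Pick $a_0\in A$ realizing $d(\U_{1,n},A)\le\e_1$, so that $d(a_0,y_1)\le \dd/4+\e_1+O(1/n)$, and pick a witness $a'\in A\setminus B(y_1,\e_2)$. For $k=0,1,\ldots,K$ with $K=\lfloor\log_2(\e_2/(C_0\e_1))\rfloor$ and $C_0$ a fixed large constant chosen so that $D_K\subset B(y_1,\e_2)$ and $a'\notin D_K$, define dyadic shells
$$
D_k:=B(a_0,2^k\e_1)\cap\U_n,\qquad C_k:=\partial_{out}D_k.
$$
Connectedness of $A$ together with $a_0\in A\cap D_0$ and $a'\in A\setminus D_K$ forces the sub-arc of $A$ joining $a_0$ to $a'$ to cross every annulus $D_{k+1}\setminus D_k$ from $C_k$ to $C_{k+1}$; this is the geometric content illustrated in Fig.~\ref{f.ill12}.

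The per-scale probabilistic input is the Beurling-style estimate already used in Lemma~\ref{hitprob2}: for each $z\in C_k$ with $k<K$, the random walk started at $z$ hits $A$ before first reaching $C_{k+1}$ with probability at least a constant $c_1>0$ uniform in $k$ and $n$. One obtains it by first invoking Lemma~\ref{chanceinterior} to move the walk, with probability bounded below, into an interior ball $B_{z,k}$ sitting at distance $\Omega(2^k\e_1/n)$ from $\partial\U\cup C_{k-1}\cup C_{k+1}$, and then applying Donsker's invariance principle on the thick annulus $D_{k+1}\setminus D_{k-1}$. Either this region is a topological annulus, in which case a continuum Brownian motion started in the interior ball makes a full loop around $a_0$ with constant probability, or it is a topological quadrilateral with two sides on $\partial\U$ (as in \eqref{localhalf} when $a_0$ is close to $\partial\U$), in which case Brownian motion makes a boundary-to-boundary half-loop with constant probability; either type of loop must cross the arc of $A$ traversing the region.

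Chaining these per-scale bounds by the strong Markov property completes the argument: to exit $B(y_1,\e_2)$ the walk must first exit $D_K$, and from $x\in\U_{1,n}$ this requires crossing each of $C_{k_0},C_{k_0+1},\ldots,C_K$ outward, where $k_0$ is the starting scale determined by $d(x,a_0)\le\dd/2+\e_1$. Each outward crossing contributes an independent factor of $(1-c_1)$, so the supremum in the statement is bounded by $(1-c_1)^{K-k_0}$; setting $C:=(1-c_1)^{1/\log 2}$ and absorbing the offset $k_0=O(\log(\dd/\e_1))$ into the $\U$-dependent constant (valid since $\dd$ is part of the data of $\U$) yields the claimed $C^{\log(\e_2/\e_1)}$. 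The main obstacle is maintaining uniformity of $c_1$ across the full range of scales: at the finest scales $2^k\e_1\sim 1/n$ the shells degenerate and one needs the heat-kernel bounds of Theorem~\ref{wtfest} as in the proof of Lemma~\ref{chanceinterior}, and when $a_0$ is close to $\partial\U$ one must repeat the topological-quadrilateral analysis of Lemma~\ref{hitprob2} verbatim, using \eqref{localhalf} to verify the half-plane-like local geometry.
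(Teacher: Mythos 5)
Your proposal uses the same core machinery as the paper's proof: a dyadic shell decomposition, the per-scale constant-probability hitting bound obtained by combining Lemma~\ref{chanceinterior} with a Jordan-curve/Donsker argument in each annulus or topological quadrilateral, and a chaining via the strong Markov property. The one structural difference is the choice of center: you place the shells at a point $a_0\in A$ realizing $d(\U_{1,n},A)$, whereas the paper centers them at $y_1$. Centering at $a_0$ has the virtue that the connected set $A$ genuinely threads every one of your annuli (since $a_0$ and $a'$ sit on opposite sides of each $C_k$), so the Jordan-curve step is clean at every scale; with the paper's centering the annuli lying inside $B(y_1,\dd/4)$ need not intersect $A$, and the per-scale claim there is more delicate. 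So the centering change is defensible and arguably an improvement.

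The gap is in your final chaining step. You correctly obtain the bound $(1-c_1)^{K-k_0}$, but you then assert that the offset $k_0=O(\log(\dd/\e_1))$ can be absorbed into the $\U$-dependent constant because ``$\dd$ is part of the data of $\U$.'' That absorption is not valid: $k_0$ depends on $\e_1$, not only on $\dd$. Since $d(x,a_0)$ can be as large as $\dd/2+\e_1$, one has $k_0\approx\log_2(\dd/\e_1)$, and with $K\approx\log_2(\e_2/\e_1)$ the two logarithms cancel to give $K-k_0\approx\log_2(\e_2/\dd)$, a quantity bounded uniformly in $\e_1$. Thus $(1-c_1)^{K-k_0}$ stays bounded away from $0$ as $\e_1\to 0$ (with $\e_2$ fixed), whereas $C^{\log(\e_2/\e_1)}$ with $C<1$ must tend to $0$; the claimed domination therefore cannot follow from the chained bound as written. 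What your chain actually proves is a bound of the form $(1-c_1)^{\Theta(\log(\e_2/\dd))}$, so some additional argument is needed to reach the stated estimate — for instance, an explicit justification that the walk started from any $x\in\U_{1,n}$ effectively enters the shell picture at a scale comparable to $\e_1$, or a tacit restriction on the regime of $\e_1$ relative to $\dd$.
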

\nin
The statement of the lemma roughly says if a connected set $A$ of large enough diameter is close enough to $\U_{1,n}$ then random walk starting from $\U_{1,n}$ is more likely to hit the set $A$ before exiting a large enough ball.\\
\noindent
\begin{proof}The proof of this lemma is similar to the proof of Lemma \ref{hitprob2}. We look at shells of exponentially growing radii centered at $y_1$  i.e. $B(y_1,2^j\e_1)\bigcap \U$ for $j=1\ldots \log(\frac{\e_2}{\e_1}).$ Recall $\partial_{out}$ from \eqref{bdry1}. Let $$C_j=\partial^{out} \bigl(B(y_1,2^j\e_1)\bigcap \U_n\bigr).$$ 
Let $\tau(C_j)$ be the first time that the random walk hits $C_j.$ We first claim that there is a constant $c$ such that for all $j$,
$$\inf_{z\in C_j}\mathbb{P}_{z}(\tau(A)\le \tau(C_{j+1}))\ge c.$$
The proof of the above claim is the same as the proof of Lemma \ref{hitprob2}. We omit the arguments to avoid repetition.\\

\noindent
Now let the random walk start anywhere from $\U_{1,n}$ and let $z\in C_j$ be the point it hits at $\tau(C_j).$
Thus again using the round argument as in the proof of the previous lemma 
 $$\sup_{x\in \U_{1,n}}\mathbb{P}_x\bigl(\tau (\U\setminus B(x_1,\e_2))\le \tau_{A}\bigr)\le (1-c)^{\log{\frac{\e_2}{\e_1}}}.$$
 Note that the number of rounds here is $\log{\frac{\e_2}{\e_1}}$.
\end{proof}
\nin
\textbf{Proof of Lemma \ref{polydecay}}.
We first prove the following lemma:\\
\begin{lem}\label{hitprob1} Fix $\alpha \in (0,1)$. Then there exists  constants $C,D$ such that for  all large enough $t$,
$$\sup_{{A \subset \U_n}\atop{\pi_{RW}(A)\ge \alpha }}\sup_{x\in \U_{n}} \mathbb{P}_x  (\tau(A) \ge t) \le Ce^{-Dt}$$
where $\pi_{RW}(\cdot)$ is the stationary measure of the random walk on $\U_n.$
\end{lem}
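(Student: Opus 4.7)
\textbf{Proof plan for Lemma \ref{hitprob1}.} The plan is to reduce the lemma to the $L^\infty$ mixing time bound of Lemma \ref{lmt} and then iterate via the strong Markov property. Intuitively, $\pi_{RW}(A)\ge\alpha$ says $A$ is a ``large'' set, and once the random walk has (approximately) mixed it must sit in $A$ with probability bounded below by a constant uniformly in the starting point; chopping $[0,t]$ into blocks of length equal to the mixing time then yields the claimed exponential decay.

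First, by Lemma \ref{lmt} applied with $\e=1/2$, there is a constant $T_0=T_0(\U)$ (independent of $n$ for large $n$) such that for every $t\ge T_0$,
$$\sup_{x,y\in\U_n}\left|\frac{\mathbb{P}_x(X(t)=y)}{\pi_{RW}(y)}-1\right|\le \tfrac{1}{2}.$$
Summing the lower inequality over $y\in A$ yields, for every $x\in\U_n$ and every $A\subset\U_n$ with $\pi_{RW}(A)\ge\alpha$,
$$\mathbb{P}_x(X(T_0)\in A)\ge \tfrac{1}{2}\,\pi_{RW}(A)\ge \tfrac{\alpha}{2}.$$
In particular, $\mathbb{P}_x(\tau(A)>T_0)\le 1-\tfrac{\alpha}{2}$, uniformly in $x$ and in $A$ satisfying the mass condition.

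Next, I would iterate this estimate using the Markov property. Conditioning on $\mathcal{F}_{(k-1)T_0}$ and applying the same bound to $X((k-1)T_0)$ in place of $x$,
$$\mathbb{P}_x\bigl(\tau(A)>kT_0\bigr)\le \mathbb{E}_x\Bigl[\mathbf{1}\{\tau(A)>(k-1)T_0\}\,\mathbb{P}_{X((k-1)T_0)}\bigl(\tau(A)>T_0\bigr)\Bigr]\le \bigl(1-\tfrac{\alpha}{2}\bigr)\,\mathbb{P}_x\bigl(\tau(A)>(k-1)T_0\bigr).$$
Inducting on $k$ gives $\mathbb{P}_x(\tau(A)>kT_0)\le(1-\alpha/2)^k$. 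For arbitrary $t\ge T_0$, taking $k=\lfloor t/T_0\rfloor$ produces the bound $Ce^{-Dt}$ with $D=-T_0^{-1}\log(1-\alpha/2)>0$ and $C=(1-\alpha/2)^{-1}$, where the constants depend only on $\alpha$ and $\U$ (via $T_0$) and not on $n$ or the particular set $A$.

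The only subtle point is checking that the constant $T_0$ produced by Lemma \ref{lmt} can be chosen uniformly in $n$ (it can, since Lemma \ref{lmt} gives $t_\infty(1/2)\le C(\U)$ for all large $n$) and that the resulting estimate is genuinely uniform over the class $\{A:\pi_{RW}(A)\ge\alpha\}$; both are built in because the $L^\infty$ bound above controls all pairs $(x,y)$ simultaneously. No heat-kernel input beyond Lemma \ref{lmt} is required, so this step is routine.
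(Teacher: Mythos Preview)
Your proof is correct and follows essentially the same approach as the paper: both invoke Lemma \ref{lmt} to get a uniform mixing time $\mathbf{t}$ (the paper uses $t_\infty(1/4)$, you use $t_\infty(1/2)$), deduce $\inf_x \mathbb{P}_x(X(\mathbf{t})\in A)\ge c\alpha$, and then iterate via the Markov property over blocks of length $\mathbf{t}$ to obtain the geometric decay $(1-c\alpha)^{\lfloor t/\mathbf{t}\rfloor}$. Your write-up is slightly more explicit about the induction step, but there is no substantive difference.
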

\nin
This is a standard mixing result which says that the hitting time of any set $A$  of large enough measure, for the random walk has exponential tail .\\
\noindent
\begin{proof}Fix any $A\subset \U_n$. 
As stated in Lemma \ref{lmt}  $\mathbf{t}:=t_{\infty}(\frac{1}{4})=O(1).$   
Now since $\pi_{RW}(A)\ge \alpha $, $\displaystyle{\inf_{x\in \U_n}\mathbb{P}_{x}(X_{\mathbf{t}}\in A)\ge \alpha/4},$
and hence 
 $$\inf_{x\in \U_n}\mathbb{P}_{x}(\tau(A)\le {\mathbf{t}})\ge \alpha/4.$$
Therefore for any $t>0$ $$\mathbb{P}_{x}(\tau(A)\ge t)\le (1-\alpha/4)^{\lfloor{T/{\mathbf{t}}}\rfloor}.$$
This is because from any $y\in \U_n$ there is a chance of at least $\alpha/4$ to hit $B_1$ in the next time interval of length ${\mathbf{t}}$. Hence the lemma is proved.
\end{proof}
\nin
We resume the proof of Lemma \ref{polydecay}.
To prove this we compute the time spent in $B(x_1,\e)$ starting from a point $z_1$ and $z_2$ such that $d(x,z_1)\le 2\e$ and $d(x,z_2)>\frac{1}{2}.$ This is helpful because of the following formula:
\begin{equation}\label{markov34}
\mathbb{P}_{z_2}(\tau(B(x_1,\e)<T))\le \frac{\int_{0}^{2T}{\mathbb{P}_{z_2}(t,B(x_1,\e))}}{\inf_{z_1}\int_{0}^{T}{\mathbb{P}_{z_1}(t,B(x_1,\e))}}. 
\end{equation}
The above follows by markov property.
The goal now is to prove upper and lower bounds on the numerator and the denominator respectively. 
We use the upper and lower bound on the gaussian heat kernel stated in Theorem \ref{wtfest}. Assume $T\ge 1.$
 We see that 
\begin{align*}
\int_{0}^{T}\mathbb{P}_{z_1}(t,B(x_1,\e))& \ge \int_{\e}^{T}[\sum_{y\in B(x_1,\e)}\frac{1}{n^2}p_{n}(t,z_1,y)]dt\\
&\ge \int_{\e}^{1}[\sum_{y\in B(x_1,\e)}\frac{1}{n^2}p_{n}(t,z_1,y)]dt\\
&\ge C\e^2 \int_{\e}^{1}\frac{1}{t}dt\\
&\ge C \e^2\log(\frac{1}{\e}).
\end{align*}
The second last inequality follows from \eqref{glb} ( the constant $C$ is  changing from line to line). 
We now look at the numerator.  Recall $\mathbf{t}$ from the proof of the last lemma.
\begin{align*}
\int_{0}^{2T}\mathbb{P}_{z_2}(t,B(x_1,\e))& \le \e P_{z_2}(\tau(B(x_1,\e)\le \e)+\int_{\e}^{2T}[\sum_{y\in B(x_1,\e)}\frac{1}{n^2}p_{n}(t,z_2,y)]dt\\
&= \e P_{z_2}(\tau(B(x_1,\e)\le \e)+\int_{\e}^{\mathbf{t}}[\sum_{y\in B(x_1,\e)}\frac{1}{n^2}p_{n}(t,z_2,y)]dt+\int_{\mathbf{t}}^{2T}[\sum_{y\in B(x_1,\e)}\frac{1}{n^2}p_{n}(t,z_2,y)]dt\\
&\le \e O(e^{\frac{-C}{\sqrt{\e}}})+ C\e^2 \int_{\e}^{\mathbf{t}}\frac{1}{t}e^{-\frac{C}{t}}dt+ C\e^2 T\\
&\le C T \e^2.
\end{align*}
The bounds on the first and second terms follow from  \eqref{et} and \eqref{lclt2} respectively. 
Note for the last integral we use the fact that for any $t\ge \mathbf{t}$ ,$p_{n}(t,z_2,y)\le \frac{C}{n^2}.$
Thus using \eqref{markov34}
\begin{align*}
\mathbb{P}_{z_2}(\tau(B(x_1,\e)<T))\le O(\frac{T}{\log(\frac{1}{\e})}).
\end{align*}
Now by Lemma \ref{hitprob1}  
\begin{align*}
\mathbb{P}_{z_1}\{\tau(A)<T)\}\ge 1- O(e^{-DT}).
\end{align*}
Using  the above bounds, taking $z=z_2$ we get that  for any $T \ge 1 $ 
\begin{align*}
\mathbb{P}_z\bigl\{\tau(B(x_1,\e))\le \tau({A})\bigr\} & \le \mathbb{P}_z\bigl\{\tau(B(x_1,\e))<T\bigr\}+\mathbb{P}_z\bigl\{\tau(A)\ge T\bigr\}\\
&\le O\left(\frac{T}{\log(\frac{1}{\e})}\right)+O(e^{-DT}).
\end{align*}
Thus we are done by choosing $T=\sqrt{\log(\frac{1}{\e})}.$
\qed \\

\nin
\textbf{Proof of Lemma \ref{gflower}.}
The proof uses similar arguments as above. We begin by providing sharp upper and lower bounds for the two following quantities: 
\begin{align*}
\E_y[\int_{0}^{\tau}{\bf}{1}(X(t)=z)]\\
\E_z[\int_{0}^{\tau}{\bf}{1}(X(t)=z)] 
\end{align*}
where $\tau=\tau(\U_n \cap[ \U\setminus \U_{(\alpha)}]).$
To show this we first notice that by either Lemma \ref{lmt} or Theorem \ref{wtfest} iii.,  there exists a constant $c=c(\U,\alpha)$ for any $w\in \U_n,$
$$\P_w(\tau \le 1)\ge c.$$
Using Theorem \ref{wtfest} we get for any $w\in \U_n$,
\begin{align}\label{kybnd45}
\int_{0}^{2}\P_w(X(t)=z)  \le \frac{1}{n^2}\int_{0}^{2}\frac{e^{-\frac{d(w,z)^2}{t}}}{\frac{1}{n^2} \vee t} dt = O(\frac{\log(1/d(w,z))}{n^2}).
\end{align}
Putting the above together, 
\begin{align*}
\E_w[\int_{0}^{\tau}{\bf}{1}(X(t)=z)] &\le \int_{0}^{2}\P_w(X(t)=z) + \sum_{\ell=0}^{\infty}\P_w(\tau \ge \ell) \sup_{w \in \U_n}\int_{\ell+1}^{\ell+2}\P_w(X(t)=z) \\
& \le O(\frac{\log(1/d(w,z))}{n^2}) +\sum_{\ell=0}^{\infty}  (1-c)^{\ell} \sup_{w \in \U_n}\int_{1}^{2}\P_w(X(t)=z)\\
&= O(\frac{\log(1/d(w,z))}{n^2}) +O(\frac{1}{n^2}).
\end{align*}
The second term follows by  the fact that starting from any $w$ in time $1$ the random walk is reasonably mixed (by Theorem \ref{wtfest}) and then from the location of the random walk at time $1$ we use \eqref{kybnd45}.
For the lower bound we use strong Markov Property, 
\begin{align*}
\E_y[\int_{0}^{\tau}{\bf}{1}(X(t)=z)] &  \ge \int_{0}^{\e^2}\P_y(X(t)=z)- \P_y(\tau\le  \e^2) \left[\sup_{w: d(w,z)>\sqrt \e}\int_{0}^{\e^2}\P_w(X(t)=z)\right] 
\end{align*}

Now for any $y \in \U_n$ such that $d(y,z) \le \e^2,$
\begin{align*}
\int_{0}^{ \e^2}\P_y(X(t)=z) &\ge \frac{1}{n^2}\int_{0}^{ \e^2 }\frac{1}{\frac{1}{n^2} \vee t}e^{-\frac{d(y,z)^2}{t}} dt\\
& \ge \frac{1}{n^2}\int_{d(y,z)^2}^{\e^2}\frac{1}{t}e^{-1} dt\\
& \ge \Omega(\frac{\log(1/(d(y,z)))}{n^2})
\end{align*}
where the last inequality follows since $d(y,z) \le \e^2$ by hypothesis.

By  Theorem \ref{wtfest},
 $\P_y(\tau\le \e^{2})\le O(e^{- \frac{1}{\sqrt \e}}).$ Also for any $w$ such that $d(w,z) \ge \sqrt \e$  we have 
\begin{align*}
\int_{0}^{\e^2}\P_w(X(t)=z) \le \frac{1}{n^2}\int_{0}^{1/n^2}\frac{1}{\frac{1}{n^2}} + \frac{1}{n^2}\int_{0}^{\e^{2}} \frac{e^{-\frac{\e}{t}}}{t}dt  
\end{align*}

Thus from the above we see that for all $y$ such that $d(y,z) \le \e^2$ we have,
$$
P_y(\tau(z) < \tau(\U\setminus \U_{\alpha})) = \frac{\int_{0}^{\tau}P_y(X(t)=z)}{\int_{0}^{\tau}P_z(X(t)=z)}  = \frac{1}{\log(n)}\Theta(\log(1/d(x,z))),$$ and we are done.
\qed

\bibliographystyle{plain}
\bibliography{GFF}

\end{document}